\newtheorem{theorem}{Theorem}[section]
\newtheorem{corollary}{Corollary}[section]
\newtheorem{definition}{Definition}[section]
\newtheorem{lemma}{Lemma}[section]
\newtheorem{proposition}{Proposition}[section]
\newtheorem{assumption}{Assumption}[section]
\newtheorem{example}{Example}[section]
\begin{document}
\title{\textbf{Twin Lagrangian fibrations in mirror symmetry}}\author{Naichung Conan Leung and Yin Li}\date{}\maketitle
\begin{abstract}
A twin Lagrangian fibration, originally introduced by Yau and the first author, is roughly a geometric structure consisting of two Lagrangian fibrations whose fibers intersect with each other cleanly. In this paper, we show the existence of twin Lagrangian fibrations on certain symplectic manifolds whose mirrors are fibered by rigid analytic cycles. Using family Floer theory in the sense of Fukaya and Abouzaid, these twin Lagrangian fibrations are shown to be induced from fibrations by rigid analytic subvarieties on the mirror. As additional evidences, we discuss two simple applications of our constructions.
\end{abstract}

\section{Introduction}

\subsection{Background}
This paper is devoted to the study of certain geometric structures arising naturally from the context of mirror symmetry. The Strominger-Yau-Zaslow conjecture $\cite{syz}$ asserts that mirror symmetry should be understood in a geometric way via the so-called $T$-\textit{duality}. Roughly speaking, this means that the mirror $Y^\vee$ of a Calabi-Yau manifold $Y$ should be interpreted as the moduli space of the Lagrangian branes $(F_b,\xi_b)$, where $F_b$ is a fiber of the (special) Lagrangian torus fibration $\pi:Y\rightarrow B$ and $\xi_b$ a unitary rank 1 local system. We call $\pi$ the \textit{SYZ fibration} on $Y$.\\
In $\cite{ly}$, Yau and the first author applied such an interpretation of mirror symmetry to the case when $Y^\vee$ carries an additional elliptic fibration structure, and conjectured that there should be another Lagrangian fibration $\pi_\star:Y\rightarrow B_\star$ which is compatible with the original SYZ fibration $\pi$ in certain sense. They call such a geometric structure
\begin{equation}B\xleftarrow{\pi}Y\xrightarrow{\pi_\star}B_\star\end{equation}
a \textit{twin Lagrangian fibration} on $Y$, see Definition \ref{definition:4.1} below. It is not hard to see the heuristic arguments used in $\cite{ly}$ can be extended to the more general case when $Y^\vee$ admits a fibration by rigid analytic subvarieties.\bigskip

On the other hand, the \textit{family Floer theory} developed by Fukaya and Abouzaid in $\cite{kf,ma,ma1}$ allows us to see precisely what is the mirror object of a tautologically unobstructed Lagrangian submanifold $L\subset Y$, or more generally, $L$ equipped with a unitary rank 1 local system $\xi$. More precisely, assuming that $\pi:Y\rightarrow B$ does not admit any singular fiber, given $(L,\xi)$, the construction of $\cite{kf}$ and $\cite{ma}$ provides a family Floer module $\mathcal{F}(L,\xi)$, whose cohomology sheaf $H^\ast\mathcal{F}(L,\xi)$ can be realized as a (twisted) coherent sheaf on $Y^\vee$. When singular fibers appear in the SYZ fibration $\pi$, the construction of Abouzaid-Fukaya can be applied over every chamber $B_\alpha\subset B$ so that $\pi$ restricted to $\pi^{-1}(B_\alpha)$ is a Lagrangian torus bundle, this yields a coherent sheaf on the algebraic torus $(\mathbb{K}^\ast)^n$, which is regarded as a chart $U_\alpha^\vee$ of the mirror. If we know how to glue the charts $\left\{U_\alpha^\vee\right\}_{\alpha\in A}$ together to obtain the corrected mirror manifold $Y^\vee$, this approach will provide a way to verify the proposal of $\cite{ly}$ rigorously.\bigskip

However, the classical SYZ approach to mirror construction is not easy to carry out. The difficulties arise both in the construction of Lagrangian fibrations and in determining very complicated quantum corrections. Fortunately, there are several interesting special cases where this approach does work. Constructions of this kind are based on the fundamental paper of Auroux $\cite{da1}$, and later generalized in $\cite{aak,fooo2,cll}$. In all these cases, whenever quantum corrections arise, they can be explicitly determined via algebraic counts of stable holomorphic discs defined by Fukaya-Oh-Ohta-Ono in $\cite{fooo1}$; and whenever walls appear in $B$, they are of the form $\Delta\times\mathbb{R}_{>-\varepsilon}$, where $\Delta$ is a codimension 1 set in $B$ over which the singular Lagrangian fibers lie, see Appendix \ref{section:converse}. In particular, since walls disjoint from each other, more complicated scattering phenomenon does not appear. Our verifications in this paper depend heavily on these constructions.

\subsection{Statement of the results}
Let $\overline{X}$ be a smooth toric Calabi-Yau manifold with complex dimension $n$, which means that it's a toric variety with trivial canonical bundle $K_{\overline{X}}\cong\mathcal{O}_{\overline{X}}$. Consider the open Calabi-Yau manifold $X=\overline{X}\setminus D$, where $D$ is a smooth divisor in $\overline{X}$ to be specified in Section $\ref{section:gghl}$. The mirror construction in this case is essentially carried out by Gross in $\cite{mg2}$. It follows that the SYZ mirror of $X$ is the open Calabi-Yau manifold defined by
\begin{equation}\label{eq:conic}X^\vee=\left\{(x_1,\cdot\cdot\cdot,x_{n-1},y,z)\in(\mathbb{K}^\ast)^{n-1}\times\mathbb{K}^2|yz=g(x_1,\cdot\cdot\cdot,x_{n-1})\right\},\end{equation}
for some regular function $g$ on $(\mathbb{K}^\ast)^{n-1}$. Unless otherwise specified, we assume $\mathbb{K}=\mathit{\Lambda}$ (or $\mathbb{C}$ whenever there is no convergence issue) where
\begin{equation}\mathit{\Lambda}=\left\{\sum_{i=0}^\infty a_iT^{\lambda_i}|a_i\in\mathbb{C},\lambda_i\in\mathbb{R},\lambda_i\rightarrow\infty\right\}\end{equation}
is the \textit{Novikov field}. By projecting to the first $n-1$ coordinates, we get a fibration on $X^\vee$ by affine conics in $\mathbb{K}^2$:
\begin{equation}\label{eq:aff-conic}p_0:X^\vee\rightarrow(\mathbb{K}^\ast)^{n-1}.\end{equation}
As suggested in $\cite{ly}$, one should expect that $X$ admits a twin Lagrangian fibration. In Section $\ref{section:twin-tcy}$, we will show that this is indeed the case: the Lagrangian fibration $\pi_G:X\rightarrow B$ introduced by Goldstein and Gross $\cite{eg,mg2}$ and a non-proper Lagrangian fibration $\pi_H$ which we will define in Section $\ref{section:gghl}$ form a twin Lagrangian fibration on $X$. Moreover, this twin Lagrangian fibration structure
\begin{equation}B\xleftarrow{\pi_G}X\xrightarrow{\pi_H}B_\star\end{equation}
on $X$ is shown to be naturally induced from the affine conic bundle structure $(\ref{eq:aff-conic})$ on its mirror $X^\vee$ in the following sense.
\begin{definition}\label{definition:twin-ind}
Let $Y$ be a Calabi-Yau manifold and $Y^\vee$ be its mirror. We say that a Lagrangian fibration $\pi_\star:Y\rightarrow B_\star$ is mirror to the fibration $\rho:Y^\vee\rightarrow S$ by rigid analytic subvarieties on $Y^\vee$ if for every regular fiber $L_\star$ of $\pi_\star$, equipped with any $\xi_\star\in H^1(L_\star,U_\mathbb{K})$,
\begin{equation}\mathrm{supp}H^\ast\mathcal{F}(L_\star,\xi_\star)=\rho^{-1}(s),\end{equation}
where $\rho^{-1}(s)\subset Y^\vee$ is a fiber of $\rho$. If $\pi_\star$ and the SYZ fibration $\pi:Y\rightarrow B$ form a twin Lagrangian fibration on $Y$, then we say that this twin Lagrangian fibration is induced by the fibration $\rho:Y^\vee\rightarrow S$.
\end{definition}

\paragraph{Remark} It's easy to see that $(L_\star,\xi_\star)\mapsto\mathrm{supp}H^\ast\mathcal{F}(L_\star,\xi_\star)$ is a slight generalization of the Fourier type transformations introduced in $\cite{ap}$ and $\cite{lyz}$. The coherent sheaf $H^\ast\mathcal{F}(L_\star,\xi_\star)$ is in general not globally defined, and we need some gluing and analytic continuation procedure to make $\mathrm{supp}H^\ast\mathcal{F}(L_\star,\xi_\star)$ into a well-defined rigid analytic subvariety of $Y^\vee$, this will be discussed in detail in Section \ref{section:trans}.\\
In the above definition, whether the Lagrangian fibration $\pi_\star$ is proper depends on whether the fibration $\rho$ on the mirror is proper. Although for most of the examples considered in this paper, $\pi_\star$ is a non-proper Lagrangian fibration, by taking $Y$ to be the quotient of our examples by a certain real lattice in $\mathbb{R}^{n-1}$, it's not difficult to get an induced Lagrangian torus fibration on $Y$ which is mirror to a fibration on $Y^\vee$ by abelian subvarieties. The mirror symmetry of these manifolds is studied in Section 10.2 of $\cite{aak}$.\\
Since the heuristic argument which predicts the existence of a twin Lagrangian fibration on certain symplectic manifolds depends on a good understanding of certain moduli spaces of sheaves supported on the fibers of $\rho$, there is usually some mild assumptions on the singular fibers of $\rho$, see Section \ref{section:twin-ell}.\bigskip

Here we use the notation $U_\mathbb{K}$ to denote the unitary group $U(1)$ when $\mathbb{K}=\mathbb{C}$ or
\begin{equation}U_{\mathit{\Lambda}}=\left\{a_0+\sum_{i=1}^\infty a_iT^{\lambda_i}|a_0\neq0,\lambda_i>0\right\}\end{equation}
when $\mathbb{K}=\mathit{\Lambda}$.\\
With the above notation, our first result can be stated as follows.
\begin{theorem}\label{theorem:1.1}
Let $\overline{X}$ be an $n$-dimensional toric Calabi-Yau manifold, then there is a twin Lagrangian fibration $B\xleftarrow{\pi_G}X\xrightarrow{\pi_H}B_\star$ on $X=\overline{X}\setminus D$ of index 1. Moreover, this twin Lagrangian fibration is induced from the affine conic bundle structure $p_0:X^\vee\rightarrow(\mathbb{K}^\ast)^{n-1}$ in the sense of Definition \ref{definition:twin-ind}.
\end{theorem}
In the above, the \textit{index} of a twin Lagrangian fibration is defined to be the minimal codimension of the intersection locus (as a smooth submanifold in an SYZ fiber) between the fibers of $\pi_G$ and $\pi_H$. 

Similar constructions can be done in the converse direction, by endowing the mirror $\overline{X}^\vee$ with a suitable symplectic structure $\omega_\varepsilon$ to be specified below. Algebraically, $\overline{X}^\vee$ is a partial compactification of $X^\vee$ by a divisor $D^\vee$, that can be realized as the blow up of $V\times\mathbb{C}$ along the codimension 2 subvariety $H\times0\subset V\times\mathbb{C}$, where $V$ is an $(n-1)$-dimensional toric variety and $H\subset V$ is a smooth hypersurface with defining equation
\begin{equation}g(\mathbf{x})=0,\mathbf{x}=(x_1,\cdot\cdot\cdot,x_{n-1})\in V.\end{equation}
In this case, a piecewise smooth Lagrangian fibration $\pi_A:X^\vee\rightarrow B^\vee$ is constructed by Abouzaid-Auroux-Katzarkov in $\cite{aak}$. With some additional assumptions on $V$ and $H$ to exclude higher order instanton corrections, the SYZ mirror construction can be carried out in a similar way as outlined in $\cite{da1}$, so we obtain a rigid analytic Calabi-Yau manifold $X$, such that $\overline{X}=X\sqcup D$ is a toric Calabi-Yau variety. Here, $X$, $\overline{X}$ and $D$ are the same as before. In this case, the defining section $w$ of the toric boundary divisor $D_{\overline{X}}$ defines a fibration
\begin{equation}\label{eq:cpxfib1}w_0:X\rightarrow\mathbb{K}^\ast\end{equation}
whose generic fibers are hypersurfaces in $X$ isomorphic to $(\mathbb{K}^\ast)^{n-1}$, and $w_0$ can be identified with $w$ by adding some constant. According to $\cite{ly}$, such an additional geometric structure on $X$ should induce a twin Lagrangian fibration on $X^\vee$. In Section $\ref{section:lag-blp}$, we will construct a non-proper piecewise smooth Lagrangian fibration
\begin{equation}\label{eq:algtr-fib}\pi_L:X^\vee\rightarrow B_\star^\vee,\end{equation}
and show that $\pi_A$ and $\pi_L$ form the expected twin Lagrangian fibration on $X^\vee$ in Section $\ref{section:twin-blp}$.
\begin{theorem}\label{theorem:1.2}
Let $\overline{X}^\vee$ be the blow up of the toric variety $V\times\mathbb{C}$ along $H\times0$, where $H\subset V$ is a nearly tropical hypersurface in the sense of Definition \ref{definition:neartrop} and $V$ satisfies Assumption \ref{assumption:3.1}. Then there is a twin Lagrangian fibration $B^\vee\xleftarrow{\pi_A}X^\vee\xrightarrow{\pi_L}B_\star^\vee$ on $X^\vee$ of index $n-1$. Moreover, this twin Lagrangian fibration is induced from the fibration $w_0:X\rightarrow\mathbb{K}^\ast$ on the mirror manifold $X$.
\end{theorem}

The results above can be generalized to the intermediate cases when $H\subset V$ is a complete intersection, then the geometric structure on $X$ can be $T^k$-invariant for any $1\leq k\leq n-1$. These results should be compared with a speculation of Teleman $\cite{ct}$ on the mirror of the abelian gauge theory, see also $\cite{ps1}$. More explicitly, a Hamiltonian $T$-action on a symplectic manifold $Y$ is believed to be mirror to a holomorphic map $\rho:Y^\vee\rightarrow T_\mathbb{C}^\vee$, where $T_\mathbb{C}^\vee$ is the dual complexified torus of $T$. In our case, $\rho$ is not only a regular map but actually a fibration, which should then lead to more delicate geometric structures on $Y$, namely a twin Lagrangian fibration which is compatible with the Hamiltonian $T$-action.\bigskip

When $\dim_\mathbb{C}(X^\vee)=2$, the mirror symmetry between the fibrations $\pi_L$ and $w_0$ established in Theorem \ref{theorem:1.2} gives geometric understandings of homological mirror symmetry for $\overline{X}^\vee$. In Section \ref{section:fuk}, we introduce a Fukaya category $\mathcal{F}uk(\pi_L)$ associated to the Lagrangian fibration $\pi_L$ on $X^\vee$ which captures the information coming from the singular fibers of $\pi_L$. There is then an equivalence
\begin{equation}\label{eq:hmsint}D\mathcal{F}uk(\pi_L)\cong D^\pi_\mathrm{sing}\big(w^{-1}(0)\big),\end{equation}
where $D^\pi_\mathrm{sing}\big(w^{-1}(0)\big)$ is the idempotent completion of the triangulated category of singularities associated to the fibration $w:X\rightarrow\mathbb{K}$, defined by Orlov in $\cite{do}$. This very simple equivalence provides further evidence to the fact that the fibrations $\pi_L$ and $w_0$ are mirror to each other.

Motivated by Section 4 of Smith's very beautiful paper $\cite{is1}$, we consider the partial compactification $\mathit{Bl}_K(\mathbb{C}^2)$ of $X^\vee$, which is just $\mathbb{C}^2$ blown up at a finite set of points $K\subset\mathbb{C}^2$. We show that in $\mathit{Bl}_K(\mathbb{C}^2)$ (equipped with a suitable monotone symplectic form) there are a finite number of Lagrangian tori $\widetilde{T}_1,\cdot\cdot\cdot,\widetilde{T}_p$ which split-generate the non-zero eigensummand of the monotone Fukaya category $\mathcal{F}uk\big(\mathit{Bl}_K(\mathbb{C}^2)\big)$ (see Proposition \ref{proposition:generation}). In this case, the thimbles $\mathit{\Delta}_1,\cdot\cdot\cdot,\mathit{\Delta}_p$ retains under the fiberwise partial compactification
\begin{equation}\widetilde{p}_0:\mathit{Bl}_K(\mathbb{C}^2)\rightarrow\mathbb{C}\end{equation}
of the Lefschetz fibration $p_0$ on the smoothing of $A_{p-1}$ singularities $X_{p-1}^\vee\supset X^\vee$, and they are isomorphic to the idempotents of $\widetilde{T}_1,\cdot\cdot\cdot,\widetilde{T}_p$ in $\mathit{Tw}\mathcal{F}uk(\pi_L)$ up to degree shifts. These considerations lead to the following result, which shows that the equivalence $(\ref{eq:hmsint})$ fits naturally into a commutative diagram, and in particular interprets the homological mirror symmetry for $\mathit{Bl}_K(\mathbb{C}^2)$ as the mirror symmetry between the fibration structures $\pi_L$ and $w_0$.
\begin{theorem}\label{theorem:fuk}
Let $D^\pi(X,W)$ be the split-closed triangulated category of D-branes of type B defined by Orlov $\cite{do}$. The diagram
\begin{equation}
\xymatrix{
D\mathcal{F}uk(\pi_L) \ar[d]_{\Phi_\bullet} \ar[r]^-{\Phi(e^+)}
 &D^\pi\mathcal{F}uk\big(\mathit{Bl}_K(\mathbb{C}^2)\big)\ar[d]^{\Phi_\mathrm{CHL}}\\
D^\pi_\mathrm{sing}\big(w^{-1}(0)\big) \ar[r]^-{\Phi(e^+)^\vee}
 &D^\pi(X,W)}
\end{equation}
commutes, where $\Phi_\bullet$ is the equivalence $(\ref{eq:hmsint})$, and $\Phi_\mathrm{CHL}$ is some variation of the localized mirror functor defined in $\cite{chl,chl1}$.
\end{theorem}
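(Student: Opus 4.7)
The plan is to establish commutativity by testing both compositions on a preferred set of split-generators, exploiting the geometric coincidence highlighted immediately before the statement: the thimbles $\mathit{\Delta}_1,\dots,\mathit{\Delta}_p$ of the Lefschetz fibration $\widetilde{p}_0$ are isomorphic, up to degree shifts, to idempotent summands of the Lagrangian tori $\widetilde{T}_1,\dots,\widetilde{T}_p$ inside $\mathit{Tw}\mathscr{F}(\pi_L)$. Since the $\widetilde{T}_i$ split-generate the non-zero eigensummand of $\mathscr{F}\bigl(\mathrm{Bl}_K(\mathbb{C}^2)\bigr)$, and the $\mathit{\Delta}_i$ assemble the part of $D\mathscr{F}(\pi_L)$ that is detected by the singular fibers of $\pi_L$, it suffices to verify commutativity on the $\mathit{\Delta}_i$ and then propagate by functoriality and passage to triangulated hulls and idempotent completions.

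First I would trace $\mathit{\Delta}_i$ around both routes. Under $\Phi(e^+)$ the thimble is sent to the idempotent summand $e^+\cdot\widetilde{T}_i$ of the torus, so the right-then-down route produces $\Phi_\mathrm{CHL}(e^+\cdot\widetilde{T}_i)$. Along the left-then-down route, $\Phi_\bullet$ identifies $\mathit{\Delta}_i$ with the simple sheaf of $D^\pi_\mathrm{sing}\bigl(w^{-1}(0)\bigr)$ supported at the node of $w^{-1}(0)$ that is mirror to the $i$-th critical fiber of $\pi_L$, and $\Phi(e^+)^\vee$ sends this simple object to its image in $D^\pi(X,W)$, which by construction is the matrix factorization attached to the corresponding critical value of the disc potential $W$. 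Object-level agreement then reduces to the statement that $\Phi_\mathrm{CHL}$ applied to $\widetilde{T}_i$, equipped with the eigenvalue-selecting bounding cochain, recovers exactly this matrix factorization. This is a local computation of the potential near the exceptional divisor of $\mathrm{Bl}_K(\mathbb{C}^2)$, essentially already performed in the Cho-Hong-Lau construction.

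To upgrade this pointwise match to an isomorphism of functors I would compare the two induced maps on $\mathrm{hom}$-spaces between the $\mathit{\Delta}_i$. Going down-then-right, they are computed Floer-theoretically inside the singular fiber and then passed through Orlov's equivalence; going right-then-down, the morphisms $e^+\cdot\widetilde{T}_i\to e^+\cdot\widetilde{T}_j$ are computed in $\mathrm{Bl}_K(\mathbb{C}^2)$ and fed into $\Phi_\mathrm{CHL}$, which replaces each of them by a disc count against the reference Lagrangian used to define the localized mirror functor. A neck-stretching argument localizing the relevant holomorphic discs near the singular fibers of $\pi_L$ should reduce both counts to the same local Floer data, controlled by the wall-crossing formulae that govern the mirror construction in Section \ref{section:fuk}.

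The main obstacle is precisely this last accounting. The partial compactification $\widetilde{p}_0$ introduces new pseudo-holomorphic discs passing through the exceptional divisors of $\mathrm{Bl}_K(\mathbb{C}^2)$, and one must show that every such new disc contributes to the bulk potential $W$ that curves $\Phi_\mathrm{CHL}$, rather than to the morphism spaces between the $e^+\cdot\widetilde{T}_i$. Once this separation is carried out, the morphisms between the idempotent summands in $\mathrm{Bl}_K(\mathbb{C}^2)$ coincide with the Floer morphisms between the thimbles in $X^\vee$, commutativity on the chosen generators is immediate, and the general case follows by extension to $D\mathscr{F}(\pi_L)$ and its split-closure.
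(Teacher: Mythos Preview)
Your strategy of checking commutativity on the generators $\mathit{\Delta}_i$ is sound and matches the paper's opening move, but you then make the problem much harder than it needs to be. The paper's proof rests on a structural observation you have overlooked: all four categories in the diagram are semisimple with total endomorphism algebra $\mathbb{K}^p$. Indeed $\mathscr{F}(\pi_L)$ was \emph{defined} so that $\mathit{CF}^\ast(\mathit{\Delta}_i,\mathit{\Delta}_j)=0$ for $i\neq j$ and $\mathit{CF}^\ast(\mathit{\Delta}_i,\mathit{\Delta}_i)=\mathbb{K}$, and Proposition \ref{proposition:hmsfib} together with the computations $\mathit{HF}^\ast(\widetilde{T}_i,\widetilde{T}_i)\cong\mathrm{Cl}_2$ and $\hom_{D^\pi_\mathrm{sing}}(\mathcal{O}_{t_i},\mathcal{O}_{t_i})\cong\mathrm{Cl}_2$ show that the other three corners are equivalent to the same category. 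The paper first verifies that all four functors are equivalences (using Lemma \ref{lemma:idem} and Theorem 1.3 of \cite{chl1} for the two vertical arrows), and then observes that between categories equivalent to $\mathscr{F}\big(\{p\textrm{ }\mathrm{pts}\}\big)$ a functor is determined up to natural isomorphism by what it does on objects, since the only morphisms are scalar multiples of identities. Commutativity therefore reduces to the object-level check you already did.

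Your proposed morphism-level verification via neck-stretching and disc separation is not wrong in principle, but it is unnecessary: there simply are no nontrivial morphisms between distinct generators to compare, and for self-morphisms both routes send the unit to the unit up to $\mathbb{K}^\ast$. The ``main obstacle'' you flag---separating discs contributing to $W$ from those contributing to morphisms---evaporates once you notice that the $\widetilde{T}_i$ are Floer-theoretically orthogonal (they project to disjoint circles under $\widetilde{p}_0$), so there are no inter-torus morphisms at all. The payoff of the paper's route is that it replaces your analytic accounting with a one-line categorical observation.
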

The definitions of the functors $\Phi(e^+)$ and $\Phi(e^+)^\vee$ will be given in Section \ref{section:fuk}.\bigskip

In the context of family Floer theory, an obvious application of a twin Lagrangian fibration is that one can use the additional Lagrangian fibration to detect the non-displaceability of certain SYZ fibers. However, when singular fibers are involved, this is technically not easy. In this paper, we look at a particularly simple example, based on the observation that the singular fibers of $\pi_L$ contain a basis of Lefschetz thimbles $\mathit{\Delta}_1,\cdot\cdot\cdot,\mathit{\Delta}_p$ of $p_0$. By studying the Floer cohomologies between $\mathit{\Delta}_i$ and the Lagrangian torus fibers of $\pi_A$, we are able to detect the non-displaceable Lagrangian tori in
\begin{equation}X_{p-1}^\vee=\left\{(x,y,z)\in\mathbb{C}^3|yz=x^p-1\right\}.\end{equation}
Combining this with the work of Wu $\cite{www}$ on finite group actions on Fukaya categories, we can then compute the Floer cohomologies of certain Lagrangian tori $T_{p,q}$ in the rational homology balls $B_{p,q}=X_{p-1}^\vee/G_{p,q}$ with $G_{p,q}\cong\mathbb{Z}_p$, see Section \ref{section:ga}. This recovers the following result due to Lekili-Maydanskiy.
\begin{theorem}[Lekili-Maydanskiy $\cite{lm}$]\label{theorem:ga}
There exist Floer theoretical essential tori $T_{p,q}\subset B_{p,q}$, and
\begin{equation}\mathit{HF}^\ast(T_{p,q},T_{p,q})\cong H^\ast(T^2,\mathbb{K})\end{equation}
additively. In particular, the symplectic cohomology $\mathit{SH}^\ast(B_{p,q})\neq0$.
\end{theorem}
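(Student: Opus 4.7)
The plan is to use the twin Lagrangian fibration on $X_{p-1}^\vee$ provided by Theorem \ref{theorem:1.2}, first to exhibit Floer-theoretically essential tori upstairs in $X_{p-1}^\vee$, and then to descend the calculation to the quotient $B_{p,q}=X_{p-1}^\vee/G_{p,q}$ using the equivariant machinery of Wu \cite{www}. The statement on symplectic cohomology will then follow from a unit-based argument via the closed-open map.

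First I would locate appropriate Lagrangian tori in $X_{p-1}^\vee$. The singular fibers of $\pi_L$ contain the distinguished basis of Lefschetz thimbles $\Delta_1,\ldots,\Delta_p$ of $p_0$, and these are the natural probes for non-displaceability. Under the mirror correspondence of Theorem \ref{theorem:1.2}, each pair $(T,\xi)$ consisting of a regular torus fiber $T$ of $\pi_A$ and a rank one local system $\xi\in H^1(T,U_\mathbb{K})$ is mirror to a skyscraper sheaf at a point $y\in X$, while the thimble $\Delta_i$ is mirror to a coherent sheaf supported on a component of $w_0^{-1}(c_i)$ for some critical value $c_i$ of $w_0$. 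By computing $\mathit{HF}^\ast(\Delta_i,(T,\xi))$ on the symplectic side and identifying it with the corresponding $\mathrm{Hom}$'s on the mirror side, I would isolate a finite family of pairs $(T,\xi)$ for which the pairing is nonzero. This certifies that the corresponding tori $T$ are Floer-theoretically essential in $X_{p-1}^\vee$.

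Next I would pass to $B_{p,q}$. The group $G_{p,q}\cong\mathbb{Z}_p$ acts on $X_{p-1}^\vee$ preserving the conic fibration $p_0$, and a $G_{p,q}$-invariant torus $T$ lying in the free locus of the action descends to a Lagrangian $2$-torus $T_{p,q}\subset B_{p,q}$. Applying Wu's results on finite group actions on Fukaya categories, $\mathit{HF}^\ast(T_{p,q},T_{p,q})$ is assembled from $\mathit{HF}^\ast(T,T)$ together with the $G_{p,q}$-character data on the local system $\xi$. Since $T$ is a Lagrangian $2$-torus, $\mathit{HF}^\ast(T,T)$ has underlying vector space $H^\ast(T^2,\mathbb{K})$, and picking the character compatible with $\xi$ yields the asserted additive identification $\mathit{HF}^\ast(T_{p,q},T_{p,q})\cong H^\ast(T^2,\mathbb{K})$. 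The non-vanishing of $\mathit{SH}^\ast(B_{p,q})$ then follows by a standard unitality argument: the closed-open map sends the unit $1\in\mathit{SH}^\ast(B_{p,q})$ to the unit of $\mathit{HF}^\ast(T_{p,q},T_{p,q})$, which lives in degree $0$ and is nonzero by the computation just made.

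The main obstacle will be the interaction between disc counts upstairs and the quotient construction downstairs. One must verify that the choice of local system $\xi$ and the bounding cochain on $T$ are compatible with the $G_{p,q}$-action, so that Wu's equivariant spectral sequence for Floer cohomology actually collapses onto the claimed $H^\ast(T^2,\mathbb{K})$ rather than being killed by corrections coming from disc bubbles near the singular fibers of $p_0$ or from the ramification geometry of $X_{p-1}^\vee\to B_{p,q}$. At this point the commutative diagram \eqref{eq:commu2} of Theorem \ref{theorem:1.2} becomes decisive, since it aligns the $G_{p,q}$-action with the wall-and-chamber structure on $B_\star^\vee$ and ensures that the disc counts entering Wu's spectral sequence are controlled by the mirror geometry of $X$, making the character-matching step tractable.
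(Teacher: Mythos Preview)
Your overall plan---probe torus fibers of $\pi_A$ with the thimbles $\mathit{\Delta}_i$ coming from singular fibers of $\pi_L$, then descend via Wu's equivariant framework---matches the paper's strategy in outline. But the descent step, which is where the actual content lies, is not carried out, and the sketch you give for it would not work as stated.

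The specific gap is this. You write that ``$\mathit{HF}^\ast(T_{p,q},T_{p,q})$ is assembled from $\mathit{HF}^\ast(T,T)$ together with the $G_{p,q}$-character data,'' and since $T$ is a $2$-torus you conclude the answer is $H^\ast(T^2,\mathbb{K})$. But Wu's transfer functor identifies $\mathit{HF}^\ast(T_{p,q},T_{p,q})$ with the \emph{$G_{p,q}$-invariant part} $\mathit{HF}^\ast_{G_{p,q}}(T_p,T_p)$, and you give no argument that these invariants are all of $H^\ast(T^2,\mathbb{K})$ rather than a proper subspace. Invoking the commutative diagram of Theorem~\ref{theorem:1.2} does not address this: that diagram controls wall-crossing in the base, not the $G_{p,q}$-action on Floer groups. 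The paper avoids this problem entirely by a different mechanism. It first shows (Lemma~\ref{lemma:genthim}) that in the equivariant category $\mathscr{F}(\pi_L)^{G_{p,q}}$ the torus $T_p$ is generated by the \emph{single} exceptional object $G_{p,q}\mathit{\Delta}_i$ (the $G_{p,q}$-orbit of any one thimble). Then the Beilinson spectral sequence for a length-one exceptional collection degenerates at $E_1$, and together with the computation $\mathit{HF}^\ast(T_p,\mathit{\Delta}_i)\cong H^\ast(S^1,\mathbb{K})$ (Lemma~\ref{lemma:torthim}) this yields $\mathit{HF}^\ast_{G_{p,q}}(T_p,T_p)\cong H^\ast(T^2,\mathbb{K})$ directly, without ever analyzing the $G_{p,q}$-action on $\mathit{HF}^\ast(T_p,T_p)$.

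Two smaller points. First, your mirror description of the thimbles is off: $w_0$ has a \emph{unique} singular fiber, not one critical value $c_i$ per thimble, so the phrase ``a component of $w_0^{-1}(c_i)$ for some critical value $c_i$'' does not parse. Second, the paper does not compute $\mathit{HF}^\ast(T_p,\mathit{\Delta}_i)$ via mirror $\mathrm{Hom}$'s as you propose; it argues directly using generation of $T_p$ by thimbles in $\mathscr{F}(p_0)$ combined with Dehn twists along matching spheres to propagate non-vanishing from one $\mathit{\Delta}_i$ to all of them.
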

The Floer cohomologies $\mathit{HF}^\ast(T_{p,q},T_{p,q})$ are computed by Lekili and Maydanskiy earlier in $\cite{lm}$ over $\mathbb{Z}_2$ using the deep results of Biran and Cornea $\cite{bc}$ on pearl complexes.

\subsection{Arrangement of this paper}
The paper is organized as follows. Section $\ref{section:lagop}$ contains some preliminaries on constructing Lagrangian fibrations using symplectic reduction, and based on this we construct the Lagrangian fibrations $\pi_H$ and $\pi_L$. Using family Floer theory, we define the mirror transformation $(L,\xi)\mapsto\mathrm{supp}H^\ast\mathcal{F}(L,\xi)$ in Section \ref{section:trans}. This is the main tool we use in Section \ref{section:twin}, where we show that the twin Lagrangian fibrations on $X$ and $X^\vee$ are mirror to fibrations by rigid analytic subvarieties on $X^\vee$ and $X$ respectively. Section \ref{section:sing} focuses on the special case of complex surfaces, Theorems $\ref{theorem:fuk}$ and $\ref{theorem:ga}$ will be proved there. The background materials concerning SYZ mirror constructions will be used frequently in this paper, so we collect them in Appendix \ref{section:syz} for readers' convenience. The localized mirror functor $\widehat{\Phi}_\mathrm{CHL}$ introduced in $\cite{chl}$ is only used in proving Theorem \ref{theorem:fuk}, which will be briefly recalled in Appendix \ref{section:localized}.

\section*{Acknowledgements}
The second author would like to express his deep gratitude to Hansol Hong for calling his attention to their papers $\cite{chl}$ and $\cite{chl1}$ on localized mirror functors, and Weiwei Wu for very useful discussions on his paper $\cite{www}$ during his visit at CUHK. He also thanks Junwu Tu for telling him about the importance of rigid analytic geometry. We thank Hiroshi Ohta and Kaoru Ono for their interests on this work. The work of the first author is supported by grants from the Research Grants Council of the Hong Kong Special Administrative Region, China (Project No. CUHK402012 and CUHK14302215) and a Direct Grant from CUHK.

\section{Construction of Lagrangian fibrations}\label{section:lagop}

\subsection{Lagrangian fibrations on toric Calabi-Yau manifolds}\label{section:gghl}
Let $N\cong\mathbb{Z}^n$ be a lattice and $N^\vee$ be its dual lattice. $\Sigma$ is a strongly convex simplicial fan supported in $N_\mathbb{R}=N\otimes\mathbb{R}$. Associated to $\Sigma$ there is a smooth toric variety $\overline{X}:=\overline{X}_\Sigma$. Denote by $v_\alpha$ the primitive generators of the rays of $\Sigma$, then the Calabi-Yau condition $K_{\overline{X}}\cong\mathcal{O}_{\overline{X}}$ is equivalent to the existence of a $\nu\in N^\vee$ such that
\begin{equation}\langle\nu,v_\alpha\rangle=1\end{equation}
for every $v_\alpha$. We will denote the set which parametrizes $v_\alpha$ by $A$.
\bigskip

From now on we assume that $\overline{X}$ is a toric Calabi-Yau manifold with $\dim_\mathbb{C}(\overline{X})=n$. It's an easy observation that the meromorphic function $w:\overline{X}\rightarrow\mathbb{C}$ corresponding to $\nu\in N^\vee$ is actually holomorphic, therefore defines a global coordinate function. Let $T_\mathbb{C}(N)\subset \overline{X}$ be the maximal cell inside the toric variety $\overline{X}$, which is defined by $N\otimes_\mathbb{Z}\mathbb{C}^\ast$.

Embedded in $T_\mathbb{C}(N)$, there is a real torus $T_\mathbb{R}(N)$ which acts on $\overline{X}$ effectively, making $\overline{X}$ into a toric symplectic manifold. Define
\begin{equation}N_\nu=\left\{n\in N|\langle\nu,n\rangle=0\right\},\end{equation}
which determines an $(n-1)$-dimensional real torus $T_\mathbb{R}(N_\nu)$. Associated to the Hamiltonian $T_\mathbb{R}(N_\nu)$-action, there is a moment map
\begin{equation}\mu_\nu:\overline{X}\rightarrow\mathfrak{t}^\ast_\mathbb{R}(N_\nu)\cong\mathbb{R}^{n-1},\end{equation}
where $\mathfrak{t}_\mathbb{R}(N_\nu)$ is the Lie algebra of $T_\mathbb{R}(N_\nu)$. Denote by $D$ the anticanonical divisor
\begin{equation}\label{eq:antidiv1}\left\{w=-1\right\}\subset\overline{X},\end{equation}
and let $X=\overline{X}\setminus D$. A standard symplectic reduction argument reduces the problem of producing a special Lagrangian fibration on $X$ to that of producing a special Lagrangian fibration on each reduced space. The observation that these reduced spaces can be identified with the $w$ coordinate plane leads to the following theorem.
\begin{theorem}[Gross $\cite{mg2}$, Goldstein $\cite{eg}$]
Let $(\overline{X},\omega_{\overline{X}})$ be a toric Calabi-Yau manifold equipped with its toric K\"{a}hler form, then $\pi_G:X\rightarrow\mathbb{R}^n$ defined by
\begin{equation}\pi_G=\big(\log|w+1|,\mu_\nu\big)\end{equation}
is a special Lagrangian torus fibration with respect to the holomorphic volume form
\begin{equation}\Omega=\frac{\Omega_{\overline{X}}}{i^n(w+1)},\end{equation}
where $\Omega_{\overline{X}}=dz_1\wedge\cdot\cdot\cdot\wedge dz_n$ when restricted to $T_\mathbb{C}(N)$.
\end{theorem}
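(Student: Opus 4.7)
The plan is to exploit the Hamiltonian $T_\mathbb{R}(N_\nu)$-action on $\overline{X}$ via symplectic reduction. Since $w = \chi^\nu$ is a monomial and $\nu$ annihilates $N_\nu$ by construction, $w$ is $T_\mathbb{R}(N_\nu)$-invariant and descends to a holomorphic function $\overline{w}$ on every regular symplectic quotient $X_\lambda := \mu_\nu^{-1}(\lambda)/T_\mathbb{R}(N_\nu)$. A dimension count shows each $X_\lambda$ is a two-real-dimensional K\"{a}hler manifold on which $\overline{w}$ provides a holomorphic chart on its smooth locus.

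Next, I would observe that the level sets $\Gamma_{\lambda,r} := \{|\overline{w}+1|=e^r\} \subset X_\lambda$ are smooth circles, hence automatically isotropic one-submanifolds. By the standard principle that Lagrangian submanifolds of a symplectic reduction lift to Lagrangian submanifolds of the total space, the preimages of $\Gamma_{\lambda,r}$ inside $\mu_\nu^{-1}(\lambda) \subset X$ are Lagrangian $n$-tori, swept out from $\Gamma_{\lambda,r}$ by the free action of $T_\mathbb{R}(N_\nu)$. These are precisely the fibers of $\pi_G = (\log|w+1|,\mu_\nu)$.

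For the special Lagrangian condition $\mathrm{Im}(\Omega)|_L = 0$, I would choose an integral basis of $N$ splitting $N = N_\nu \oplus \mathbb{Z} e_n$ with $\langle\nu,e_n\rangle=1$, and let $z_1',\ldots,z_n'$ be the corresponding characters, so that $w = z_n'$. In these log coordinates the holomorphic volume form factorizes on the big cell as
\[\Omega \;=\; \frac{1}{i^n}\, d\log z_1' \wedge \cdots \wedge d\log z_{n-1}' \wedge d\log(w+1).\]
On $L$ the directions $\partial_{\theta_j'}$ for $j<n$ span the $T_\mathbb{R}(N_\nu)$-orbit, while the remaining radial direction $R$ is the horizontal lift of $\partial_{\arg(\overline{w}+1)}$ to $\mu_\nu^{-1}(\lambda)$. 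The identity $d\log(w+1)|_L = i\,d\arg(w+1)|_L$, valid because $|w+1|$ is constant on $L$, makes $d\log(w+1)(R)$ purely imaginary; and a direct determinant computation shows $(d\log z_1' \wedge \cdots \wedge d\log z_{n-1}')(\partial_{\theta_1'},\ldots,\partial_{\theta_{n-1}'}) = i^{n-1}$. The prefactor $1/i^n$ cancels the net $i^n$ contributed by these two factors, and $\Omega|_L$ becomes a real top form, which is precisely the special Lagrangian condition.

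The main obstacle is extending this verification across the strata of the toric boundary in $\overline{X}\setminus D$. There some monomial $z_j'$ vanishes, the $T_\mathbb{R}(N_\nu)$-action may develop non-trivial stabilizers, and the reduced spaces $X_\lambda$ can acquire orbifold singularities, so the local factorization above needs to be reinterpreted through affine toric charts. One must also argue that the loci of degenerate action project to a codimension-two discriminant inside $\mathbb{R}^n$, so that $\pi_G$ is a genuine special Lagrangian torus fibration away from this discriminant; the toric Calabi-Yau condition $\langle\nu,v_\alpha\rangle=1$ for every ray generator is exactly what keeps the boundary contributions tame enough for the extension to go through.
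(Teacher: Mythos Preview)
Your proposal is correct and follows essentially the same approach as the paper: the paper itself gives only a one-sentence sketch before stating the theorem (``a standard symplectic reduction argument reduces the problem \ldots\ to $\mu_\nu^{-1}(\lambda)/T_\mathbb{R}(N_\nu)$ \ldots\ these reduced spaces can be identified with the $w$ coordinate plane''), deferring the details to the original references of Gross and Goldstein, and your argument is precisely a fleshed-out version of that sketch. One small remark: your final paragraph worries about extending across the toric boundary, but the relevant degeneration is really where the $T_\mathbb{R}(N_\nu)$-orbits collapse along $\{w=0\}=D_{\overline{X}}$, which the paper records as producing the codimension-two discriminant $\Delta\subset\{0\}\times\mathbb{R}^{n-1}$; this is a description of the singular fibers rather than an obstruction to the special Lagrangian condition on the regular ones.
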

It's not difficult to verify that the set of critical points of $\pi_G$ is given by the union of codimension 2 toric strata in $\overline{X}$. From this we see that there is a codimension 2 discriminant locus $\Delta\subset B\simeq\mathbb{R}^n$ inside $\{0\}\times\mathbb{R}^{n-1}$ over which the fibers of $\pi_G$ become singular, see $\cite{mg2}$ for an explicit description. In particular, $\Delta$ can be decomposed as a disjoint union of smooth submanifolds. For example, when $\dim_\mathbb{C}(X)=3$, $\Delta\subset\mathbb{R}^3$ is a trivalent graph, and we have the decomposition
\begin{equation}\Delta=\Delta_d\sqcup\Delta_g\end{equation}
into vertices $\Delta_d$ and edges $\Delta_g$. The singular fiber $\pi_G^{-1}(\bullet)$ over $\bullet\in\Delta_g$ is obtained by collapsing a circle in the regular fiber $L$ down to a point. These are called \textit{generic} singular fibers. Passing from an edge to a vertex in $\Delta_d$ makes the corresponding Lagrangian fiber ''more singular", which means there is a 2-torus in $L$ which collapses to a point. Lagrangian fibrations with such a topological behavior near a trivalent vertex in $\Delta_d$ are called \textit{positive} in the sense of $\cite{mg1}$ and $\cite{cm2}$. The situation in higher dimensions is similar.\bigskip

Using the same method we get the following.
\begin{example}\label{theorem:ghlf}
The map $\pi_H:X\rightarrow S^1\times\mathbb{R}^{n-1}$ defined by
\begin{equation}\pi_H=\big(\arg(w+1),\mu_\nu\big)\end{equation}
is a Lagrangian fibration on $X$ with respect to $\omega_{\overline{X}}$.
\end{example}

It's clear from the definition that the Lagrangian fibration $\pi_H$ is smooth and non-proper, and its regular fibers are homeomorphic to $T^{n-1}\times\mathbb{R}$. On the other hand, it's easy to see that with the above definition, the set of critical points of the map $\pi_H$ coincides with that of $\pi_G$, from which we obtain an identification between the discriminant loci of the Lagrangian fibrations $\pi_H$ and $\pi_G$. 

The singular fibers of $\pi_H$ has a similar description with that of $\pi_G$. For a generic singular fiber $\pi_H^{-1}(\bullet)$ of $\pi_H$, it can be decomposed as
\begin{equation}\label{eq:singdecomp}\pi_H^{-1}(\bullet)=L^+\cup L^-,\end{equation}
where $L^\pm$ are Lagrangian submanifolds homeomorphic to $T^{n-2}\times\mathbb{R}^2$ and $L^+$ intersects $L^-$ cleanly along a $T^{n-2}$. Over lower dimensional components of $\Delta$ (if non-empty), the fibers of $\pi_H$ become ``more singular" in the sense that one of the orbits of the Hamiltonian $T_\mathbb{R}(N_\nu)$-action degenerates to a lower-dimensional torus $T^k$ with $k<n-2$. In particular, over the vertices in $\Delta$, the fiber becomes a cone over $T^{n-1}$.\bigskip

The following follows easily from the definitions of the Lagrangian fibrations $\pi_G$ and $\pi_H$.

\begin{proposition}\label{proposition:inter}
Let $L$ be a regular fiber of $\pi_G$, then for any fiber $L_\star$ of $\pi_H$ which intersects $L$ non-trivially, the intersection $L\cap L_\star$ is an $(n-1)$-dimensional sub-torus of $L$. Similarly, for any fiber $L$ of $\pi_G$ which has non-trivial intersection with a regular fiber $L_\star$ of $\pi_H$, it intersects $L_\star$ cleanly along a $T^{n-1}$.
\end{proposition}
\begin{proof}
	Fix an $L$ so that
	\begin{equation}L=\big\{\left|w+1\right|=C_1,\mu_\nu=C_2\big\},\end{equation}
	where $C_1>0$ and $C_2\in\mathbb{R}^{n-1}$ are constants. For any point in $L\cap L_\star$, it satisfies $\mu_\nu=C_2$, which implies that any $L_\star$ with $L\cap L_\star\neq\emptyset$ has the form
	\begin{equation}L_\star=\big\{\arg\left(w+1\right)=C_3,\mu_\nu=C_2\big\}\end{equation}
	for some $C_3\in S^1$. For the reduced coordinate $w$, the ray specified by $\arg(w+1)=C_3$ intersects the circle $|w+1|=C_1$ precisely at one point. This shows that $L\cap L_\star\simeq T^{n-1}$, where the intersection locus is a Hamiltonian orbit of the $T_\mathbb{R}(N_\nu)$-action.
	
	Similar argument applies to any regular fiber $L_\star$ of $\pi_H$.
\end{proof}

\subsection{Lagrangian fibrations on affine conic bundles}\label{section:lag-blp}
Let $V$ be an $(n-1)$-dimensional toric variety, $H\subset V$ a \textit{nearly tropical} hypersurface (i.e. $H$ belongs to a certain degenerating family, $H_\tau$, converging to certain tropical limit, which we will define below). Denote by $\overline{X}^\vee$ the blow up of $V\times\mathbb{C}$ along $H\times0$. Here we follow closely the approach of $\cite{aak}$ to introduce the A-side geometric setup of $\overline{X}^\vee$.\\
Let $\Sigma_V\subset\mathbb{R}^{n-1}$ be the fan associated to $V$, which is generated by a set of primitive integral vectors $\sigma_1,\cdot\cdot\cdot,\sigma_r$. Let $H_\tau\subset V$ be a family of smooth algebraic hypersurfaces with $0<\tau<1$, and assume that they are transverse to the toric boundary divisor $D_V\subset V$. To understand the combinatorial nature of the hypersurfaces $H_\tau\subset V$, we can pass to the \textit{tropical limit} $\tau\rightarrow0$ and look at the degeneration of $H_\tau$. More precisely, let $\mathbf{x}=(x_1,\cdot\cdot\cdot,x_{n-1})$ be the coordinates of the open stratum $(\mathbb{C}^\ast)^{n-1}\subset V$, which we denote by $V_0$. Suppose that $H_\tau$ are defined by the following equations
\begin{equation}\label{eq:trohp}g_\tau(\mathbf{x})=\sum_{\alpha\in A}c_\alpha\tau^{\rho(\alpha)}\mathbf{x}^\alpha=0,\end{equation}
where $A\subset\mathbb{Z}^{n-1}$ consists of the group characters of $V_0$, $c_\alpha\in\mathbb{C}^\ast$ and $\rho:A\rightarrow\mathbb{R}$ is a map satisfying certain convexity property, which we will define shortly below.\\
An alternative way to describe $H_\tau$ is to regard it as the zero locus of a section $g_\tau$ of some nef line bundle $\mathcal{L}\rightarrow V$ defined by the convex piecewise linear function $\ell:\Sigma_V\rightarrow\mathbb{R}$ with integer slopes. The polytope $P_\mathcal{L}$ associated to $\mathcal{L}$ is given by
\begin{equation}P_\mathcal{L}=\left\{v\in\mathbb{R}^{n-1}|\langle\sigma_i,v\rangle+\ell(\sigma_i)\geq0,\forall1\leq i\leq r\right\},\end{equation}
then $P_\mathcal{L}\cap\mathbb{Z}^{n-1}$ can be identified with a basis of $H^0\big(X,\mathcal{O}(\mathcal{L})\big)$. The condition that $H_\tau$ is transversal to $D_V$ is equivalent to the requirement that $A\subset P_\mathcal{L}\cap\mathbb{Z}^{n-1}$ intersects nontrivially with the closure of each face of $P_\mathcal{L}$.\\
We want to impose an additional convexity assumption on $\rho$. To do this, let $\mathcal{P}$ be a polyhedral decomposition of the convex hull $\mathrm{Conv}(A)\subset\mathbb{R}^{n-1}$, whose set of vertices is given by $\mathcal{P}^{(0)}=A$. We assume further that $\mathcal{P}$ is \textit{regular}, i.e. every cell of $\mathcal{P}$ is congruent to a standard simplex under the action of $\mathit{GL}(n-1,\mathbb{Z})$. The map $\rho:A\rightarrow\mathbb{R}$ is said to be \textit{adapted} to $\mathcal{P}$ if it's the restriction of a convex piecewise linear function $\tilde{\rho}:\mathrm{Conv}(A)\rightarrow\mathbb{R}$ on $A$, whose maximal domains of linearity are exactly those cells of $\mathcal{P}$.
\begin{definition}[$\cite{aak}$]
We say that the family of hypersurfaces $H_\tau\subset V$ has a maximal degeneration for $\tau\rightarrow0$ if it is defined by $(\ref{eq:trohp})$ and $\rho$ is adapted to some regular polyhedral decomposition $\mathcal{P}$.
\end{definition}
For every fixed hypersurface $H_\tau$, consider the image of $H_\tau\cap V_0$ under the map
\begin{equation}\label{eq:logmap}\mathrm{Log}_\tau:(x_1,\cdot\cdot\cdot,x_{n-1})\mapsto\frac{1}{|\log\tau|}\big(\log|x_1|,\cdot\cdot\cdot,\log|x_{n-1}|\big).\end{equation}
This is known as an \textit{amoeba} $\mathit{\Pi}_\tau\subset\mathbb{R}^{n-1}$. As $\tau\rightarrow0$, $\mathit{\Pi}_\tau\subset\mathbb{R}^{n-1}$ converges to a tropical hypersurface $\mathit{\Pi}_0\subset\mathbb{R}^{n-1}$ defined by the tropical polynomial
\begin{equation}\label{eq:tropp}\chi(\xi)=\max\big\{\langle\alpha,\xi\rangle-\rho(\alpha)|\alpha\in A\big\}.\end{equation}
In fact, $\mathit{\Pi}_0$ is just the dual cell complex of $\mathcal{P}$, in particular the connected components of $\mathbb{R}^{n-1}\setminus\mathit{\Pi}_0$ are labeled by the elements of $\mathcal{P}^{(0)}=A$, depending on which term in $(\ref{eq:tropp})$ is the maximal one.
\begin{definition}[$\cite{aak}$]\label{definition:neartrop}
A smooth hypersurface $H\subset V$ is called nearly tropical if it appears as a member of a maximally degenerating family of hypersurfaces, with the additional property that its amoeba $\mathit{\Pi}=\mathrm{Log}(H\cap V_0)$ is contained in a neighborhood of the tropical hypersurface $\mathit{\Pi}_0$, and there is a retraction from $\mathit{\Pi}$ to $\mathit{\Pi}_0$.
\end{definition}
Roughly speaking, nearly tropical means that $H$ is close enough to its tropical limit, so that the complement of $\mathit{\Pi}_\tau$ in $\mathbb{R}^{n-1}$ have same combinatorial type as that of $\mathbb{R}^{n-1}\setminus\mathit{\Pi}_0$ in the sense that $\mathbb{R}^{n-1}\setminus\mathit{\Pi}_\tau$ and $\mathbb{R}^{n-1}\setminus\mathit{\Pi}_0$ have the same number of chambers and the adjacency between chambers is preserved when passing to the limit $\tau\rightarrow0$. The assumption that the family is maximally degenerating is intended to ensure that the mirror $X$ of $X^\vee$ constructed in Appendix \ref{section:converse} is smooth.\\
To equip $\overline{X}^\vee$ with an appropriate symplectic structure, we first write down the equation of $\overline{X}^\vee$ using the coordinates on $V\times\mathbb{C}$ and the fiber coordinates of $\mathcal{L}$. Recall that the defining equation $g(\mathbf{x})$ of $H$ can be identified with a section of the line bundle $\mathcal{L}\rightarrow V$. The normal bundle $\nu_{H\times0}\subset V\times\mathbb{C}$ is given by $(\mathcal{L}\times\mathbb{C})\oplus\mathcal{O}_{V\times\mathbb{C}}|_{H\times0}$, so we can realize $\overline{X}^\vee$ as a hypersurface in the total space of the fiberwise compactification
\begin{equation}\mathbb{P}\big((\mathcal{L}\times\mathbb{C})\oplus\mathcal{O}_{V\times\mathbb{C}}\big)\rightarrow V\times\mathbb{C}.\end{equation}
More explicitly, denote by $\mathbf{x}$ and $y$ the coordinates on $V$ and $\mathbb{C}$ respectively, then
\begin{equation}\overline{X}^\vee=\Big\{\big(\mathbf{x},y,(u:v)\big)\in\mathbb{P}\big((\mathcal{L}\times\mathbb{C})\oplus\mathcal{O}_{V\times\mathbb{C}}\big)|g(\mathbf{x})v=yu\Big\}.\end{equation}
Note that this is a partial compactification of the affine conic bundle $X^\vee$ defined in $(\ref{eq:conic})$ by the anticanonical divisor
\begin{equation}D^\vee=p^{-1}(D_V\times\mathbb{C})\cup\widetilde{V},\end{equation}
where $p:\overline{X}^\vee\rightarrow V\times\mathbb{C}$ denotes the blow up map and $\widetilde{V}$ is the proper transform of $V$. Consider the following $S^1$-action on $\overline{X}^\vee$:
\begin{equation}\label{eq:ciract}e^{i\theta}\cdot\big(\mathbf{x},y,(u:v)\big)=\big(\mathbf{x},e^{i\theta}y,(u:e^{i\theta}v)\big).\end{equation}
This action preserves the exceptional divisor $E$ and acts by rotation on each fiber of the trivial $\mathbb{P}^1$-bundle
\begin{equation}\label{eq:p1bun}p|_E:E\rightarrow H\times0.\end{equation}
Also the the fixed point set of the $S^1$-action $(\ref{eq:ciract})$ is given by $\widetilde{V}\sqcup \widetilde{H}$, where $\widetilde{H}$ consists of the points $(0:1)$ in each fiber of $(\ref{eq:p1bun})$.\\
To equip $\overline{X}^\vee$ with an $S^1$-invariant K\"{a}hler form $\omega_\varepsilon$, $\cite{aak}$ introduces an $S^1$-invariant $C^\infty$ cutoff function $\eta:\overline{X}^\vee\rightarrow\mathbb{R}$ with $\mathrm{supp}(\eta)$ lying inside a tubular neighborhood of $H\times0$ and $\eta=1$ near $H\times0$. Set
\begin{equation}\label{eq:kahfor}\omega_\varepsilon=p^\ast\omega_{V\times\mathbb{C}}+\frac{i\varepsilon}{2\pi}\partial\bar{\partial}\Big(\eta(\mathbf{x},y)\log\big(|g(\mathbf{x})|^2+|y|^2\big)\Big).\end{equation} This is a well-defined $S^1$-invariant K\"{a}hler form on $\overline{X}^\vee$ provided that $\varepsilon>0$ is small enough. More precisely, $\varepsilon$ needs to be chosen so that a standard symplectic neighborhood of size $\varepsilon$ of $H\times0$ can be embedded $S^1$ equivariantly into $\mathrm{supp}(\eta)$. To achieve this, the following assumption is imposed in $\cite{aak}$:
\begin{assumption}\label{assumption:symnei}
$\mathrm{supp}(\eta)\subset p^{-1}(U_H\times B_\delta)$, where $U_H\supset H$ is a standard symplectic neighborhood of $H$ with area $\delta$ and $B_\delta\subset\mathbb{C}$ is the disc of radius $\delta$.
\end{assumption}
We now review the construction of the Lagrangian torus fibration $\pi_A$ on $(X^\vee,\omega_\varepsilon)$ in $\cite{aak}$. First note that the $S^1$-action $(\ref{eq:ciract})$ on $\overline{X}^\vee$ is Hamiltonian with respect to $\omega_\varepsilon$, denote by $\mu_1:\overline{X}^\vee\rightarrow\mathbb{R}$ the associated moment map. By the expression $(\ref{eq:kahfor})$ of $\omega_\varepsilon$, $\mu_1$ is given by
\begin{equation}\mu_1(\mathbf{x},y)=\pi|y|^2+\varepsilon|y|\frac{\partial}{\partial|y|}\Big(\eta(\mathbf{x},y)\log\big(|g(\mathbf{x})|^2+|y|^2\big)\Big).\end{equation}
Here we prefer to make a slight modification and set $\mu_0=\mu_1-\varepsilon$, so it takes the value $-\varepsilon$ over $\widetilde{V}$. The generic level sets of $\mu_0$ are smooth, with the exception that $\mu_0^{-1}(0)$ is singular along $\widetilde{H}\subset(\overline{X}^\vee)^{S^1}$.\\
Denote by
\begin{equation}\overline{X}^\vee_{\mathrm{red},\lambda}=\mu_0^{-1}(\lambda)/S^1\end{equation}
the reduced space at $\lambda$. For $\lambda>-\varepsilon$, we have a diffeomorphism $\overline{X}^\vee_{\mathrm{red},\lambda}\cong V$. Also for $\lambda\gg0$, since $\mu_0^{-1}(\lambda)$ is disjoint from $\mathrm{supp}(\eta)$, then by $(\ref{eq:kahfor})$, we have a symplectomorphism $(\overline{X}^\vee_{\mathrm{red},\lambda},\omega_{\mathrm{red},\lambda})\cong(V,\omega_V)$. But for $\lambda<0$, $\omega_{\mathrm{red},\lambda}$ differs from the toric K\"{a}hler form $\omega_V$ in a tubular neighborhood of $H$.\\
The remedy is to average $\omega_{\mathrm{red},\lambda}$ with respect to the standard $T^{n-1}$-action on the toric variety $V$:
\begin{equation}\omega_{V,\lambda}=\frac{1}{(2\pi)^n}\int_{T^{n-1}}t^\ast\omega_{\mathrm{red},\lambda}dt,\end{equation}
which leads to a toric K\"{a}hler form $\omega_{V,\lambda}$ for $\lambda\neq0$. Since $\omega_{\mathrm{red},\lambda}$ and $\omega_{V,\lambda}$ lie in the same cohomology class, the following Moser type lemma can be obtained.
\begin{lemma}[Abouzaid-Auroux-Katzarkov \cite{aak}, Lemma 4.1]\label{lemma:aak}
There exists a family of homeomorphisms $\phi_\lambda:(\overline{X}^\vee_{\mathrm{red},\lambda},\omega_{\mathrm{red},\lambda})\rightarrow(V,\omega_{V,\lambda})$ for $\lambda\in\mathbb{R}_{>-\varepsilon}$ such that $\phi_\lambda^\ast\omega_{V,\lambda}=\omega_{\mathrm{red},\lambda}$ and
\begin{itemize}
\item $\phi_\lambda$ preserves $D_V\subset V$;
\item for $\lambda\neq0$, $\phi_\lambda$ restricted to $V_0$ is a diffeomorphism;
\item $\phi_\lambda$ depends piecewise smoothly on $\lambda$, and smoothly except when $\lambda=0$.
\end{itemize}
\end{lemma}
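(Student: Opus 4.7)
The plan is to apply an equivariant Moser trick, adapting it to handle the singularity at $\lambda=0$. My first step would be to verify that $[\omega_{\mathrm{red},\lambda}]=[\omega_{V,\lambda}]$ in $H^2$ for $\lambda\neq 0$. This should follow from the fact that $\omega_{V,\lambda}$ is obtained by averaging $\omega_{\mathrm{red},\lambda}$ over the compact connected group $T^{n-1}$: for each $t\in T^{n-1}$, the form $t^{\ast}\omega_{\mathrm{red},\lambda}-\omega_{\mathrm{red},\lambda}$ is exact (since $t$ is isotopic to the identity), and integrating the primitives over $T^{n-1}$ yields a $T^{n-1}$-invariant primitive $\alpha_\lambda$ of $\omega_{\mathrm{red},\lambda}-\omega_{V,\lambda}$. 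Crucially, outside $\mathrm{supp}(\eta)$ the two forms agree with the toric K\"ahler form $\omega_V$, so $\alpha_\lambda$ can be arranged to vanish in a neighborhood of $D_V$.

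Next, for any fixed $\lambda\in(-\varepsilon,0)\cup(0,\infty)$, I would run the standard Moser argument along the linear interpolation $\omega_t=(1-t)\omega_{V,\lambda}+t\omega_{\mathrm{red},\lambda}$, which stays symplectic once $\varepsilon$ is small enough by Assumption \ref{assumption:symnei}. The time-dependent vector field $X_t$ defined by $\iota_{X_t}\omega_t=-\alpha_\lambda$ integrates to the desired $\phi_\lambda$, and the $T^{n-1}$-invariance of $\alpha_\lambda$ guarantees $T^{n-1}$-equivariance of $\phi_\lambda$; since $X_t$ vanishes near $D_V$, the flow in particular preserves the toric boundary. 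Smoothness in $\lambda$ on each of the two open intervals $(-\varepsilon,0)$ and $(0,\infty)$ then follows from the smooth dependence of $\mu_0^{-1}(\lambda)$, hence of $\omega_{\mathrm{red},\lambda}$, on $\lambda$ away from the critical value $0$, together with the smooth dependence of $\alpha_\lambda$ on $\lambda$.

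The main obstacle is the case $\lambda=0$, where $\mu_0^{-1}(0)$ is singular along $\widetilde{H}$ and $\omega_{\mathrm{red},0}$ has a genuine degeneracy along $H\subset V$, so the Moser argument cannot be run directly. My plan is to construct $\phi_\lambda$ independently for $\lambda>0$ and $\lambda<0$ and then take limits as $\lambda\to 0^{\pm}$. To do this one needs uniform control of $\alpha_\lambda$ and of the Moser vector field in shrinking tubular neighborhoods of $H$, showing that the associated flow remains $C^0$-bounded and that its time-$1$ map extends continuously across the singular locus. This should produce homeomorphisms $\phi_0^{\pm}$ which both identify the singular reduced space with $V$ topologically, restrict to diffeomorphisms on $V_0$, and are allowed to disagree at $\lambda=0$; this mismatch is precisely what accounts for the piecewise, rather than globally, smooth dependence asserted in the lemma. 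The technical heart of the argument is thus a quantitative analysis of the degeneration of $\omega_{\mathrm{red},\lambda}$ at $H$ as $\lambda\to 0$, combined with standard ODE estimates for the Moser flow on the singular stratum.
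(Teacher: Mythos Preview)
The paper does not give its own proof of this lemma: it is stated as a result of Abouzaid--Auroux--Katzarkov and simply cited from \cite{aak}, with the surrounding text only explaining how it is used to construct $\pi_A$. So there is nothing in the present paper to compare your argument against.

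That said, your outline is essentially the approach taken in \cite{aak}: average to get a toric form in the same cohomology class, then run a Moser argument with an invariant primitive, treating $\lambda>0$ and $\lambda<0$ separately and analyzing the limit $\lambda\to 0$. One point in your sketch deserves correction. You assert that $\alpha_\lambda$ can be arranged to vanish near $D_V$ because the two forms agree outside $\mathrm{supp}(\eta)$. But $H$ meets $D_V$ (it is only assumed transverse to it), so the support of the perturbation in the reduced space, which lies in a tubular neighborhood of $H$, does intersect $D_V$. Hence $\omega_{\mathrm{red},\lambda}$ and $\omega_{V,\lambda}$ need not agree near $D_V\cap H$, and you cannot conclude that the Moser vector field vanishes there. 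What one actually shows is that $X_t$ is \emph{tangent} to each toric stratum: both forms restrict to the same form on $D_V$ (the $T^{n-1}$-averaging preserves the strata and the restriction), so the primitive can be chosen to pull back to zero on $D_V$, forcing $X_t$ to be tangent rather than zero. This still yields that $\phi_\lambda$ preserves $D_V$, but the mechanism is different from the one you state.
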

This is the key lemma which enables us to complete the construction of a Lagrangian torus fibration on $X^\vee$. Namely one first applies the diffeomorphism $\phi_\lambda$ to identify $\overline{X}^\vee_{\mathrm{red},\lambda}$ with the toric symplectic manifold $(V,\omega_{V,\lambda})$, then the map $\mathrm{Log}_\tau$ defined by $(\ref{eq:logmap})$ will induce a Lagrangian torus fibration on $V_0$, which together with the Hamiltonian $S^1$ orbits gives a Lagrangian fibration on $X^\vee$.
\begin{theorem}[Abouzaid-Auroux-Katzarkov \cite{aak}]\label{theorem:aak}
The map $\pi_A:X^\vee\rightarrow\mathbb{R}^{n-1}\times\mathbb{R}_{>-\varepsilon}$ defined by
\begin{equation}\pi_A(\mathbf{p})=\big(\mathrm{Log}_\tau\circ\phi_\lambda(\mathbf{x}),\mu_0(\mathbf{p})=\lambda\big),\end{equation}
where $\mathbf{x}\in \overline{X}^\vee_{\mathrm{red},\lambda}$ is the $S^1$ orbit of $\mathbf{p}\in X^\vee$, is a Lagrangian torus fibration on $X^\vee$ with respect to the symplectic form $\omega_\varepsilon$.
\end{theorem}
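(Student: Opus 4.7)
The plan is to reduce everything to Lemma~\ref{lemma:aak} combined with the standard fact that $\mathrm{Log}_\tau$ is a Lagrangian torus fibration on $V_0$ with respect to any $T^{n-1}$-invariant K\"{a}hler form on $V$. Concretely, for each $\lambda\in\mathbb{R}_{>-\varepsilon}$ the level set $\mu_0^{-1}(\lambda)$ is (outside the singular locus $\widetilde{H}$ that appears at $\lambda=0$) a smooth coisotropic submanifold of $\overline{X}^\vee$ whose null foliation is the $S^1$-orbit foliation, so an $S^1$-saturated submanifold $L\subset\mu_0^{-1}(\lambda)$ is Lagrangian in $(\overline{X}^\vee,\omega_\varepsilon)$ iff $L/S^1$ is Lagrangian in $(\overline{X}^\vee_{\mathrm{red},\lambda},\omega_{\mathrm{red},\lambda})$. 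Pushing forward via $\phi_\lambda$, which Lemma~\ref{lemma:aak} guarantees to be a symplectomorphism onto $(V,\omega_{V,\lambda})$ (piecewise smooth in $\lambda$), the question becomes whether the $\mathrm{Log}_\tau$-fibers in $V_0$ are Lagrangian tori in $(V,\omega_{V,\lambda})$, and this is immediate since those fibers are exactly the orbits of the compact torus $T^{n-1}\subset V_0$.

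First I would verify that the map is well-defined on $X^\vee=\overline{X}^\vee\setminus D^\vee$. Since $\widetilde{V}=\mu_0^{-1}(-\varepsilon)\subset D^\vee$, one has $\lambda>-\varepsilon$ throughout $X^\vee$; and since $\phi_\lambda$ preserves $D_V\subset V$ by Lemma~\ref{lemma:aak} while $p^{-1}(D_V\times\mathbb{C})\subset D^\vee$, the image $\phi_\lambda(\mathbf{x})$ of any point of $X^\vee$ always lies in the open stratum $V_0$, so $\mathrm{Log}_\tau\circ\phi_\lambda$ is defined. Next I would confirm that a generic fiber is a topological $T^n$: the quotient map $\mu_0^{-1}(\lambda)\to\overline{X}^\vee_{\mathrm{red},\lambda}$ is a principal $S^1$-bundle over the regular locus, so the generic fiber of $\pi_A$ is a principal $S^1$-bundle over a $T^{n-1}$-orbit, and this bundle is trivialized by the angular coordinate $\arg y$ coming from the explicit $S^1$-action~(\ref{eq:ciract}).

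The main obstacle is the wall $\lambda=0$, where $\mu_0^{-1}(0)$ is singular along $\widetilde{H}$, the reduced form $\omega_{\mathrm{red},0}$ disagrees with the toric K\"{a}hler form $\omega_V$ in a tubular neighborhood of $H$, and $\phi_\lambda$ degrades from smooth to merely piecewise smooth. This is precisely the difficulty that Lemma~\ref{lemma:aak} is engineered to overcome: the piecewise-smooth family $\phi_\lambda$ provides a continuous Lagrangian fibration across the wall, at the price that $\pi_A$ itself becomes only piecewise smooth along $\mu_0^{-1}(0)$. Over the special values of $\xi\in\mathbb{R}^{n-1}$ meeting the amoeba $\mathit{\Pi}$ the corresponding $T^{n-1}$-orbit in $V$ crosses $H$ under $\phi_0$, and the conic fiber of $X^\vee\to V\times\mathbb{C}$ becomes nodal; this produces the codimension-$2$ discriminant of $\pi_A$ but does not affect the Lagrangian torus fibration assertion over regular values, which is what needs to be checked.
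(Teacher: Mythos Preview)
Your proposal is correct and follows the same approach the paper takes. The paper does not give a formal proof of this theorem (it is attributed to \cite{aak}), but the sentence immediately preceding the theorem statement sketches exactly your argument: apply $\phi_\lambda$ to identify the reduced space with the toric symplectic manifold $(V,\omega_{V,\lambda})$, then use $\mathrm{Log}_\tau$ to get a Lagrangian torus fibration on $V_0$, and combine with the Hamiltonian $S^1$-orbits. Your write-up is more detailed than the paper's sketch; the only minor divergence is that where you invoke the coisotropic null-foliation description of symplectic reduction directly, the paper (in the parallel proof of Theorem~\ref{theorem:piell}) prefers to cite Propositions~5.2 and~5.4 of \cite{cm2} for that same step.
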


In general, the fibration $\pi_A$ is only piecewise smooth when $\dim_\mathbb{C}(X^\vee)\geq3$. In fact, this coincides with various expectations in the study of mirror symmetry from the vewpoint of $T$-duality, see for example $\cite{mg1}$, $\cite{dj}$ and $\cite{cm2}$. More precisely, according to $\cite{mg1}$, $T$-duality in dimension 3 is topologically a duality between positive and \textit{negative} vertices of the discriminant locus. However, it seems near a negative vertex, the local model of and the expected Lagrangian torus fibration cannot be smooth, instead a piecewise smooth fibration can be constructed, see for example $\cite{cm2}$. In our case, every vertex in $\Delta_d$ is positive, and $\pi_G$ is a smooth fibration. This explains why, as the dual of $\pi_G$, the Lagrangian fibration $\pi_A$ should only be piecewise smooth.\bigskip

When $\dim_\mathbb{C}(X^\vee)\geq3$, the set of critical points of $\pi_A$ can be identified with the hypersurface $H\subset V_0$, therefore under the projection of the map $\mathrm{Log}_\tau\circ\phi_\varepsilon$, its discriminant locus $\Delta\subset B^\vee$ is an amoeba $\widetilde{\mathit{\Pi}}\subset\mathbb{R}^{n-1}\times\{0\}$ diffeomorphic to $\mathit{\Pi}_\tau$, and the generic singular fibers of $\pi_A$ over $\mathit{\Pi}_\tau$ are topologically circle fibrations over $T^{n-1}\subset V_0$, with the circle fibers over $H\subset V_0$ being collapsed to points.\bigskip

Using the same method we can construct a piecewise smooth non-proper Lagrangian fibration on $X^\vee$.
\begin{example}\label{example:piell}
The map $\pi_L:X^\vee\rightarrow T^{n-1}\times\mathbb{R}_{>-\varepsilon}$ defined by
\begin{equation}\pi_L(\mathbf{p})=\big(\mathrm{Arg}\circ\phi_\lambda(\mathbf{x}),\mu_0(\mathbf{p})=\lambda\big),\end{equation}
where
\begin{equation}\label{eq:image}\mathrm{Arg}(\mathbf{x})=\big(\arg(x_1),\cdot\cdot\cdot,\arg(x_{n-1})\big)\end{equation}
is a piecewise smooth Lagrangian fibration on $X^\vee$ with respect to $\omega_\varepsilon$, with its generic fiber $L_\star$ homeomorphic to $\mathbb{R}^{n-1}\times S^1$.
\end{example}
From now on, we shall fix the same choice of the family of homeomorphisms $\{\phi_\lambda\}$ in the construction of the Lagrangian fibrations $\pi_A$ and $\pi_L$.\bigskip

It's straightforward to verify that the set of critical points of $\pi_L$ coincides with that of $\pi_A$. From this we see that the discriminant locus of $\pi_L$ is the image of the hypersurface $H\subset V_0$ under the map $\mathrm{Arg}\circ\phi_\varepsilon$. A generic singular fiber of $\pi_L$ is topologically the singular space obtained by collapsing the $S^1$-orbits in $\mathbb{R}^{n-1}\times S^1$  of the Hamiltonian circle action over the hypersurface $H$ to points.\bigskip

\begin{proposition}\label{proposition:clean}
Let $L$ be a regular fiber of $\pi_A$, then for any fiber $L_\star$ of $\pi_L$ which intersects $L$ non-trivially, $L\cap L_\star\cong S^1$. Similarly, for any fiber $L$ of $\pi_A$ which has non-trivial intersection with a regular fiber $L_\star$ of $\pi_L$, the intersection is a circle.
\end{proposition}
\begin{proof}
	Consider a regular fiber $L$ of $\pi_A$, it be written as
	\begin{equation}L=\big\{\mathrm{Log}\circ\phi_{C_6}(\mathbf{x})=C_5,\mu_0(\mathbf{p})=C_6\big\},\end{equation}
	where $C_5\in\mathbb{R}^{n-1}$ and $C_6\in\mathbb{R}_{>-\varepsilon}$ are constants. If $L\cap L_\star\neq\emptyset$, the fiber $L_\star$ of $\pi_L$ must satisfy
	\begin{equation}L_\star=\big\{\mathrm{Arg}\circ\phi_{C_6}(\mathbf{x})=C_7,\mu_0(\mathbf{p})=C_6\big\},\end{equation}
	where $C_7\in T^{n-1}$. Since $\big\{\mathrm{Log}(\mathbf{x})=C_5\big\}$ and $\big\{\mathrm{Arg}(\mathbf{x})=C_7\big\}$ intersect transversally at one point in $V_0$, it's easy to see that $L\cap L_\star$ is a circle, which is in fact a Hamiltonian $S^1$-orbit of the action $(\ref{eq:ciract})$.
	
	Similar considerations hold for any regular fiber $L_\star$ of $\pi_L$.
\end{proof}

We now introduce the notion of a twin Lagrangian fibration, whose motivation from mirror symmetry will be discussed later in Section \ref{section:twin-ell}.
\begin{definition}\label{definition:4.1}
	A twin Lagrangian fibration on the symplectic manifold $Y$ consists of two Lagrangian fibrations $\pi:Y\rightarrow B$, $\pi_\star:Y\rightarrow B_\star$ (which may contain singular fibers), such that for every regular fiber $L$ of $\pi$, any fiber $L_\star$ (possibly singular) of $\pi_\star$ with $L\cap L_\star\neq\emptyset$ intersects $L$ cleanly along a smooth submanifold. The same is required for every regular fiber of $\pi_\star$. Such a structure will be denoted by
	\begin{equation}B\xleftarrow{\pi}Y\xrightarrow{\pi_\star}B_\star.\end{equation}
	The index of a twin Lagrangian is defined to be $\mathrm{codim}_\mathbb{R}(L\cap L_\star)$ in $L$ or $L_\star$.
\end{definition}

Our discussions in this section implies the following:
\begin{proposition}
The Lagrangian fibrations $\pi_G$ and $\pi_H$ form a twin Lagrangian fibration of index 1 on $X$, and the Lagrangian fibrations $\pi_A$ and $\pi_L$ form a twin Lagrangian fibration of index $(n-1)$ on $X^\vee$.
\end{proposition}

\section{Family Floer theory and mirror transformation}\label{section:trans}
In this section, we introduce a mirror transformation
\begin{equation}(L,\xi)\mapsto\mathrm{supp}H^\ast\mathcal{F}(L,\xi)\end{equation}
by applying the formalism of family Floer theory developed in $\cite{ma,ma1,kf}$. The expositions here follows $\cite{ma,ma1,aak,kf2}$.\\
We begin with the local case. Let $\pi:Y\rightarrow B$ be a Lagrangian torus bundle with a Lagrangian section, so that as integral affine manifolds
\begin{equation}(B,\mathcal{A})\cong(P,\mathcal{A}_\mathrm{std}),\end{equation}
where $P\subset\mathbb{R}^n$ is the interior of a convex polytope equipped with the standard integral affine structure. By Arnold-Liouville theorem, we have the identifications
\begin{equation}T_bB\cong H^1(F_b,\mathbb{R}),T_b^\ast B\cong H_1(F_b,\mathbb{R}),\end{equation}
where $F_b$ is any fiber of $\pi$. Assuming $\pi_2(B)=0$, the mirror $Y^\vee$ of $Y$ is the rigid analytic variety defined by
\begin{equation}\label{eq:defbb}Y^\vee\equiv\mathrm{val}^{-1}(B)=\bigsqcup_{b\in B}H^1(F_b,U_\mathbb{K})\subset H^1(F_b,\mathbb{K}^\ast),\end{equation}
where $\mathrm{val}:Y^\vee\rightarrow B$ denotes the valuation.
\begin{definition}
A Lagrangian submanifold $L$ in a symplectic maniold $Y$ is tautologically unobstructed if there exists a tame almost complex structure $J_L$ on $Y$ such that $L$ bounds no non-constant $J_L$-holomorphic disc.
\end{definition}
Let $L\subset Y$ be a tautologically unobstructed Lagrangian submanifold which is \textit{relatively Spin}, we recall the local construction of the \textit{family Floer module} $\mathcal{F}(L)$ associated to $L$ due to $\cite{ma,ma1}$.\\
For $F_b$ a fiber of $\pi:Y\rightarrow B$, assume that there is a (compactly supported) Hamiltonian diffeomorphism $\phi$ so that the intersection $F_b\cap\phi(L)$ is transverse for every $b\in B$ with $F_b\cap\phi(L)\neq\emptyset$. For $x,y\in F_b\cap\phi(L)$, denote by $\mathcal{M}_b(x,y)$ the moduli space of solutions $u:\mathbb{R}\times[0,1]\rightarrow Y$ of the equation
\begin{equation}(\partial_s-J_t\partial_t)u=0\end{equation}
with Lagrangian boundary conditions
\begin{equation}u(s,0)\in F_b,u(s,1)\in\phi(L)\end{equation}
and asymptotic conditions
\begin{equation}\lim_{s\rightarrow-\infty}u(s,t)=x,\lim_{s\rightarrow+\infty}u(s,t)=y,\end{equation}
where $\{J_t\}_{t\in[0,1]}$ is a family of tame almost complex structures so that $J_1=\phi_\ast(J_L)$. Associated to $u\in\mathcal{M}_b(x,y)$ there is an orientation line $o_u$ of the linearized Cauchy-Riemann operator $D_u$ at $u$.\\
Since $L$ is assumed to be relatively Spin, standard index theory assigns a rank 1 free abelian group $o_x$ to each intersection point $x\in F_b\cap\phi(L)$, and there is a canonical isomorphism
\begin{equation}o_u\otimes o_x\cong o_y.\end{equation}
Assuming $\deg y=\deg x+1$, for a generic choice of $\{J_t\}$ the moduli space $\mathcal{M}_b(x,y)$ consists only of rigid elements, and $\ker D_u$ is 1-dimensional. Fixing the orientation of $\ker D_u$ corresponding to the positive direction one gets a canonical map
\begin{equation}\partial_u:o_x\rightarrow o_y.\end{equation}
The local version of family Floer module is defined as
\begin{equation}\mathcal{F}(L)=\bigoplus_{x\in F_b\cap\phi(L)}\mathcal{O}_{Y^\vee}\otimes o_x.\end{equation}
To define the associated differential $\delta:\mathcal{F}(L)\rightarrow\mathcal{F}(L)[1]$, for each intersection point $x\in\phi(L)\cap F_b$, choose a function $g_x:B\rightarrow\mathbb{R}$ such that the Lagrangian section of $\pi:Y\rightarrow B$ is obtained by fiberwise addition of $dg_x$. This choice determines a path $\gamma$ on $F_b$ from $x$ to the basepoint, i.e. the intersection between $F_b$ and the fixed Lagrangian section. Define
\begin{equation}[\partial u]\in H_1(F_b,\mathbb{Z})\end{equation}
to be the homology class of the loop in $F_b$ obtained by concatenating the boundary of the strip $\partial u$ with $\gamma$. Define
\begin{equation}\delta|o_x:=\bigoplus_y\sum_{u\in\mathcal{M}_b(y,x)}T^{\mathcal{E}(u)}z^{[\partial u]}\otimes\partial_u,\end{equation}
where $\mathcal{E}(u)$ is the energy of $u$, and $z$ is the coordinate on $Y^\vee$. It's a non-trivial fact that the infinite sum $\sum_{u\in\mathcal{M}_b(y,x)}T^{\mathcal{E}(u)}z^{[\partial u]}$ converges in $T$-adic topology and defines a function in $\mathcal{O}_{Y^\vee}$, see $\cite{ma,kf2}$.\\
Passing from local to global requires carefully choosing the Floer data and establishing the relevant continuation maps. We are not going to recall these issues as they are not needed here, see $\cite{ma,ma1}$ for more details.\\
Passing to cohomology gives us a coherent sheaf $H^\ast\mathcal{F}(L)$ over $Y^\vee$, with its stalk over $(F_b,\xi_b)\in Y^\vee$ given by the Floer cohomology group $\mathit{HF}^\ast\big((F_b,\xi_b),L\big)$.\\
The above construction has an obvious generalization to the case when $L$ is equipped with a unitary rank 1 local system $\xi$, and the coherent sheaf $H^\ast\mathcal{F}(L,\xi)$ has its stalk over $(F_b,\xi_b)$ the Floer cohomology group
\begin{equation}\label{eq:flco}\mathit{HF}^\ast\big((F_b,\xi_b),(L,\xi)\big).\end{equation}
Notice that $L\subset Y$ is not assumed to be compact in the above definition, since $F_b$ is closed and both Lagrangian submanifolds are tautologically unobstructed, the Floer cohomology groups $(\ref{eq:flco})$ are well-defined in the usual sense.\bigskip

Now consider the more general case when $\pi:Y\rightarrow B$ is a Lagrangian fibration with possibly singular fibers but whose generic fiber has vanishing Maslov class. This is the case of $\pi_G$ and $\pi_A$ introduced above, see for example, Proposition 5.1 of $\cite{aak}$ for details. Let $\big\{(L_t,J_t)\big\}_{t\in[0,1]}$ be a path between the Lagrangian fibers $L_0,L_1\subset Y$, and $J_t$ is a family of almost complex structures which are fixed at infinity. For the cases we deal with in this paper, the following assumption is always satisfied for any two regular fibers $F_p$ and $F_q$ of $\pi$.
\begin{assumption}
The path $\big\{(L_t,J_t)\big\}_{t\in[0,1]}$ can be decomposed into finitely many sub-paths $\big\{(L_t,J_t)\big\}_{t\in[t_0,t_1]}$ so that all simple stable holomorphic discs lie in a fixed class in $H_2(Y,L_{t_0})$.
\end{assumption}
Under this assumption, we have a birational map
\begin{equation}H^1(L_0,\mathbb{K}^\ast)\dashrightarrow H^1(L_1,\mathbb{K}^\ast)\end{equation}
whose construction is essentially due to Fukaya in $\cite{kf2}$, see also $\cite{jt}$. After specializing to the case when $L_0=F_p$ and $L_1=F_q$, we get a wall-crossing map
\begin{equation}\label{eq:wall-crossing}\mathit{\Upsilon}_{\alpha\beta}:H^1(F_p,\mathbb{K}^\ast)\dashrightarrow H^1(F_q,\mathbb{K}^\ast).\end{equation}
In the cases treated in Appendix \ref{section:syz}, there exist charts $U_\alpha,U_\beta\subset Y$ fitting into the local picture above, such that $F_p\subset U_\alpha$, $F_q\subset U_\beta$ and we have the identifications
\begin{equation}\label{eq:chart}U_\alpha^\vee\cong H^1(F_p,\mathbb{K}^\ast),U_\beta^\vee\cong H^1(F_q,\mathbb{K}^\ast),\end{equation}
where $U_\alpha^\vee$ and $U_\beta^\vee$ are rigid analytic $T$-duals of $U_\alpha$ and $U_\beta$ respectively. So the birational maps $\mathit{\Upsilon}_{\alpha\beta}$ can be used to glue different charts $U_\alpha^\vee$ and $U_\beta^\vee$ together to obtain the corrected, completed SYZ mirror
\begin{equation}Y^\vee=\bigsqcup_{\alpha\in A}H^1(F_p,\mathbb{K}^\ast)/\sim,\end{equation}
where the equivalence relation $\sim$ identifies points which are mapped to each other under $\mathit{\Upsilon}_{\alpha\beta}$.\bigskip

The local construction of the family Floer module can be applied each chart $U_\alpha\subset Y$. In particular, for any Lagrangian submanifold $L\subset Y$ which is oriented, Spin, and tautologically unobstructed when restricted to $U_\alpha$, we obtain a rigid analytic subvariety
\begin{equation}\mathrm{supp}H^\ast\mathcal{F}(L_\alpha,\xi_\alpha)\subset U_\alpha^\vee,\end{equation}
where $L_\alpha=L|U_\alpha$ and $\xi_\alpha$ is the one induced from certain $\xi\in H^1(L,U_\mathbb{K})$.

Under the additional assumption that
\begin{equation}\label{eq:gluecon}\mathit{\Upsilon}_{\alpha\beta}\big(\mathrm{supp}H^\ast\mathcal{F}(L_\alpha,\xi_\alpha)\big)=\mathrm{supp}H^\ast\mathcal{F}(L_\beta,\xi_\beta),\end{equation}
the rigid analytic subvarieties coming from local constructions can be glued together to obtain a well-defined rigid analytic subvariety
\begin{equation}\mathrm{supp}H^\ast\mathcal{F}(L,\xi)\subset Y^\vee,\end{equation}
which is defined to be our mirror transformation of $(L,\xi)$.

For the examples considered in this paper, namely when $L$ is a regular fiber of $\pi_H$ or $\pi_L$, the gluing condition (\ref{eq:gluecon}) above is satisfied. This is due to the fact that in our case, the wall-crossing map $\mathit{\Upsilon}_{\alpha\beta}$ is the identity when restricted to  the subvarieties $\mathrm{supp}H^\ast\mathcal{F}(L_\alpha,\xi_\alpha)$ of $U_\alpha^\vee$. See Sections \ref{section:twin-tcy} and \ref{section:twin-blp} for details.

\section{Twin Lagrangian fibrations}\label{section:twin}

\subsection{Geometric setup}\label{section:twin-ell}
This subsection is essentially an overview of $\cite{ly}$. We explain the motivation of introducing the notion of a twin Lagrangian fibration (Definition \ref{definition:4.1}) and give some speculations of such a geometric structure. For simplicity, we consider here the mirror of an elliptic Calabi-Yau manifold. With some additional effort, one should be able to extend most of the considerations here to the general case of Calabi-Yau manifolds fibered by rigid analytic subvarieties.\bigskip

Let $Y^\vee$ be an $n$-dimensional Calabi-Yau manifold over $\mathbb{K}$, with $\rho:Y^\vee\rightarrow S$ being an elliptic fibration on $Y^\vee$. Suppose that we have a well-defined compactified relative Jacobian
\begin{equation}\mathcal{Y}^\vee:=\overline{\mathrm{Jac}}(Y^\vee/S),\end{equation}
which is the moduli space of semistable sheaves of rank one, degree zero supported on the fibers of $\rho$. Mirror symmetry predicts that such a moduli space can be identified with certain moduli space $\mathcal{Y}$ of Lagrangian branes $(L_\star,\xi_\star)$ with $\xi\in H^1(L,U_\mathbb{K})$ on the mirror symplectic manifold $Y$.\\
For simplicity, we impose the following additional assumptions:
\begin{itemize}
\item The Lagrangian submanifolds $L_\star$ are oriented, Spin and unobstructed.
\item The elliptic fibration $\rho:Y^\vee\rightarrow S$ has a section.
\item The singular fibers of $\rho$ only have nodal or cuspidal singularities.
\end{itemize}
Since the elliptic fibration $\rho$ may contain singular fibers, $L_\star$ may be singular as well. Here we assume that $L_\star$ is an \textit{immersed} Lagrangian submanifold, so that it has a well-defined Floer theory. $L_\star$ (decorated with local systems) in $\mathcal{Y}$ are disjoint (or disjoinable by Hamiltonian isotopies) from each other in view of the fact that
\begin{equation}\mathit{Ext}^\ast(\mathcal{O}_{F_1},\mathcal{O}_{F_2})=0\end{equation}
for two different fibers $F_1,F_2$ of $\rho$. Since $\dim_\mathbb{K}(\mathcal{Y}^\vee)=n$, we see that $\dim_\mathbb{R}(\mathcal{Y})=2n$, which implies that these Lagrangian submanifolds foliate at least an open subset $Y^\circ$ of $Y$. If one looks closer, the Lagrangians $L_\star$ should actually foliate the whole space $Y$. In fact, the expected isomorphism
\begin{equation}\mathit{HF}^\ast(L_\star,L_\star)\cong\mathit{Ext}^\ast(\mathcal{O}_F,\mathcal{O}_F)\end{equation}
suggests that the Lagrangian submanifolds $L_\star$ are tori, where $F$ is a fiber of $\rho$ so that $\mathcal{O}_F$ is mirror to $L_\star$. Treating the foliation on $Y^\circ$ by Lagrangian submanifolds $L_\star$'s as an SYZ fibration, its mirror $(Y^\circ)^\vee$ can be identified with the relative Jacobian $\mathcal{Y}^\vee$ of $Y^\vee$. By our assumptions on $\rho$, there is an isomorphism $\mathcal{Y}^\vee\cong Y^\vee$. This implies that $(Y^\circ)^\vee\cong Y^\vee$. Passing to the mirror side, the embedding $\iota:Y^\circ\hookrightarrow Y$ should then be a symplectomorphism into $\iota(Y^\circ)$. In conclusion, besides the putative SYZ fibration $\pi:Y\rightarrow B$, the mirror of an elliptic Calabi-Yau manifold carries another Lagrangian fibration $\pi_\star:Y\rightarrow B_\star$.\bigskip

Generalizing the above picture, it also makes sense to consider other fibrations on $Y^\vee$ by rigid analytic subvarieties, which should still induce Lagrangian fibrations on $Y$. However, in general, it could be difficult to make sense of the moduli space $\mathcal{Y}^\vee$. This is the case of the Lagrangian fibrations $\pi_H$ and $\pi_L$ defined above in Section \ref{section:lagop}, as these fibrations are induced from fibrations on $Y^\vee$ by non-complete subvarieties.\bigskip

The first assumption on the elliptic fibration $\rho$ in the heuristic arguments above is actually not necessary. This is illustrated in the following example, which is also considered in $\cite{ma}$.\bigskip

\textit{Example}. Consider $\mathbb{R}^4$ with coordinates $x_1,x_2,x_3,x_4$ and equipped with the standard symplectic form $\omega_{\mathbb{C}^2}$. Let $\mathit{\Gamma}'\subset\mathbb{R}^4$ be the lattice defined by translations of integral vectors in the directions $x_2,x_3,x_4$ and the transformation
\begin{equation}(x_1,x_2,x_3,x_4)\rightarrow(x_1+1,x_2,x_3,x_4+x_3).\end{equation}
The symplectic form $\omega_{\mathbb{C}^2}$ is invariant under the action of $\mathit{\Gamma}'$ and therefore descends to the quotient $M=\mathbb{R}^4/\mathit{\Gamma}'$. $M$ is called the \textit{Kodaira-Thurston manifold} $\cite{wt}$. In $\cite{is}$ it is noticed that on $M$ there are two inequivalent Lagrangian fibrations. The first one $\pi:M\rightarrow B$ is obtained by projecting to the coordinates $x_2$ and $x_3$. It has a Lagrangian section and we regard it as the SYZ fibration on $M$. The second fibration is a principal $T^2$-bundle $\pi_\star:M\rightarrow B_\star$, and therefore has \textit{no} Lagrangian section. $\pi_\star$ is obtained by projecting to the coordinates $x_1$ and $x_3$. One can see that the rigid analytic $T$-dual of $\pi:M\rightarrow B$ is a primary Kodaira surface $M^\vee$, which is a principal elliptic bundle $\rho:M^\vee\rightarrow E$ over an elliptic curve $E$. Since $\rho$ does not admit a section, the relative Jacobian $J$ is not isomorphic to $M^\vee$. In fact, $J\cong E\times E$ is an abelian surface. We remark that the Lagrangian fibration mirror to $\rho:M^\vee\rightarrow E$ is exactly the principal $T^2$-bundle $\pi_\star:M\rightarrow T^2$.\\
One can also consider the mirror of $\pi_\star$, which involves a gerbe $\alpha_M\in H^2(J,\mathcal{O}^\ast)$. This is in fact the obstruction to the existence of a relative Poincar\'{e} sheaf over $M^\vee\times_E J$ $\cite{ac1}$.\bigskip

Back to the general setting, we want to derive some further constraints on the second Lagrangian fibration $\pi_\star:Y\rightarrow B_\star$. Let $L$ be a fiber of $\pi:Y\rightarrow B$, which we also assume to be unobstructed. Then its mirror $\mathcal{O}_y$ is a skyscraper sheaf with $y\in Y^\vee$. Homological mirror symmetry predicts that
\begin{equation}\mathit{HF}^\ast(L,L_\star)\cong\mathit{Ext}^\ast(\mathcal{O}_y,\mathcal{O}_F)\end{equation}
as $\mathbb{K}$-vector spaces. Adopting the Morse-Bott model of Lagrangian Floer cohomology $\cite{fooo1}$, the above isomorphism suggests that the intersection $L\cap L_\star$ is clean and is a codimension 1 submanifold of $L$. (One may regard it as a simplifying assumption which does not violate our general prediction.) As a concrete example, take $L$ and $L_\star$ to be respectively the fibers of the two Lagrangian fibrations on $(T^4,\omega_\mathrm{std})$ defined by projecting respectively to $(x_2,x_4)$ and $(x_1,x_4)$, and let $\rho: E\times E\rightarrow E$ be the obvious elliptic fibration on the mirror.

\paragraph{Remark} In $\cite{ly}$, it is also required that the Lagrangian fibations $\pi$ and $\pi_\star$ should admit Lagrangian sections. This condition is not imposed here because we want to include the Kodaira-Thuston manifold $M$ as an example which admits a twin Lagrangian fibration.

\subsection{Twin Lagrangian fibrations on toric Calabi-Yau manifolds}\label{section:twin-tcy}
Let $\overline{X}$ be an $n$-dimensional toric Calabi-Yau manifold. In this subsection, we study the mirror symmetry of the twin Lagrangian fibration structure on $X=\overline{X}\setminus D$ given by the Lagrangian fibrations $\pi_G$ and $\pi_H$ constructed in Section $\ref{section:gghl}$.

As mentioned in the introduction, such a twin Lagrangian fibration is expected to be mirror to the affine conic bundle $p_0:X^\vee\rightarrow(\mathbb{K}^\ast)^{n-1}$ on the mirror. Based on the mirror construction described in Appendix \ref{section:syz-tcy} and the mirror transformation introduced in Section \ref{section:trans}, we now verify this. The following unobstructedness result is needed in our proof.

Recall from Appendix \ref{section:syz-tcy} that the base $B$ of the SYZ fibration $\pi_G$ is separated by the wall $\mathcal{W}\subset B$ into two chambers
\begin{equation}B_1=\{b_1>0\}\times\mathbb{R}^{n-1},B_2=\{b_1<0\}\times\mathbb{R}^{n-1}.\end{equation}
\begin{lemma}\label{lemma:unob}
Let $U_i=\pi_G^{-1}(B_i)$, where $i=1,2$. For any regular fiber $L_\star$ of $\pi_H$, $L_\star^i=L_\star|{U_i}$ is tautologically unobstructed as a Lagrangian submanifold in $U_i$.
\end{lemma}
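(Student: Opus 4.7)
The strategy is to use the restriction to $U_i$ of the standard toric complex structure $J_{\mathrm{std}}$ on $\overline{X}$ and to run a maximum principle argument driven by the globally defined holomorphic function $w: \overline{X} \to \mathbb{C}$. Suppose $u: (D, \partial D) \to (U_i, L_\star^i)$ is a non-constant $J_{\mathrm{std}}$-holomorphic disc. Composing with $w+1$ produces a holomorphic function $h = (w+1)\circ u: D \to \mathbb{C}$, and the defining equation of $L_\star = \{\arg(w+1)=C_3,\,\mu_\nu=C_2\}$ forces the boundary values of $h$ to lie on the ray $\{re^{iC_3}\mid r>0\}$. Rotating by $e^{-iC_3}$ makes the imaginary part of $h$ a harmonic function on $D$ which vanishes on $\partial D$, so by the maximum principle it vanishes identically; hence $h$ is real-valued up to the factor $e^{iC_3}$ and, being holomorphic, must be a constant $h_0 \in \mathbb{C}$.

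This reduces the problem to studying discs in a level set $\{w=c\}$ with $c=h_0-1$. Because $L_\star$ is a regular fiber and the discriminant of $\pi_H$ coincides with that of $\pi_G$ and lies in $\{0\}\times\mathbb{R}^{n-1}$, we have $C_3\neq 0$; the ray $\{\arg=C_3\}$ therefore avoids both the origin and the point $1$, ruling out $c=-1$ (the removed divisor $D$) and $c=0$ (the toric boundary $\{w=0\}$, which $L_\star$ cannot meet since $\arg(0+1)=0\neq C_3$). The level set $\{w=c\}$ then sits entirely in the big torus $T_\mathbb{C}(N)$ and is biholomorphic to $(\mathbb{C}^\ast)^{n-1}$, with the $T_\mathbb{R}(N_\nu)$-action restricting to the standard real torus action whose moment map is a linear function of $(\log|z_1|,\dots,\log|z_{n-1}|)$. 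Thus $L_\star\cap\{w=c\}$ is a standard product torus $\{|z_j|=r_j\}$.

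The final step is classical: any coordinate $u_j=z_j\circ u$ is a holomorphic, nowhere-vanishing function on $D$ with $|u_j|=r_j$ on $\partial D$, so applying the maximum modulus principle to $u_j$ and to $1/u_j$ forces $|u_j|\equiv r_j$, and a holomorphic function of constant modulus is constant. Hence $u$ is constant, contradicting the hypothesis. Note that the argument never genuinely uses the chamber restriction $U_i\subset X$; it shows that $L_\star$ itself bounds no non-constant $J_{\mathrm{std}}$-holomorphic disc in $X$, but the chamber-wise statement is the form that will be fed into the family Floer machinery of Section \ref{section:trans}.

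The only delicate point in the plan is the exclusion of the exceptional values $c\in\{0,-1\}$ in the second step, which relies essentially on the regularity hypothesis on $L_\star$; if one tried to weaken this hypothesis, $u$ could escape into a toric stratum on which the above max-modulus argument degenerates, and one would have to analyse holomorphic discs meeting the toric boundary more carefully. In the present setting this cannot happen, so no further analytic input is required.
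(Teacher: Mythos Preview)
Your maximum-principle strategy is sound in outline but contains a genuine error in the step excluding $c=0$. The discriminant $\Delta_\star$ has codimension $2$ in $B_\star$; it sits properly inside the codimension-$1$ slice $\{0\}\times\mathbb{R}^{n-1}$, so there \emph{are} regular fibers with $C_3=0$ (take any $C_2$ outside the projection of $\Delta_\star$). For such $L_\star$ the ray $\{\arg(w+1)=0\}$ passes through $w+1=1$, and nothing in your argument rules out $h_0=1$. The fix is exactly the chamber restriction you dismiss: on $U_i$ one has $|w+1|\neq1$ throughout, so $h_0\neq1$ and $c\neq0$ automatically, after which your reduction to product tori in $(\mathbb{C}^\ast)^{n-1}$ goes through cleanly. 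Thus your closing remark is backwards---the chamber hypothesis is precisely what saves the delicate step you flag, and in all of $X$ with $C_3=0$ the case $c=0$ is not hypothetical but actually occurs, landing the disc on the toric boundary $\{w=0\}$ where your final max-modulus argument no longer applies directly.

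For comparison, the paper takes a different and shorter route that sidesteps this issue entirely: restricted to $U_i$ the fibration $\pi_H$ becomes a genuine $\mathbb{R}\times T^{n-1}$-bundle (removing the wall removes the critical locus of $\pi_H$), so $\pi_2(U_i,L_\star^i)\cong\pi_2(B_\star)$, and $B_\star\simeq S^1\times\mathbb{R}^{n-1}$ has $\pi_2=0$. Hence there are no disc classes at all, for any tame almost complex structure. Your corrected argument only shows unobstructedness for the specific $J_{\mathrm{std}}$, which suffices for the definition but is weaker than what the paper establishes.
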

\begin{proof}
Recall from Section \ref{section:gghl} that the set of critical points of $\pi_H$ is identical to that of $\pi_G$, so after removing the fibers of $\pi_G$ over the wall $\mathcal{W}:=\{0\}\times\mathbb{R}^{n-1}$ in $B$, the critical locus of $\pi_H$ has been removed. After restricting the definition of $\pi_H$ to $U_i$, it then becomes a Lagrangian $T^{n-1}\times\mathbb{R}$ bundle $f_i:U_i\rightarrow S^1\times\mathbb{R}^{n-1}$. Note that after removing the critical locus of $\pi_H$, a singular fiber splits into two copies of $T^{n-1}\times\mathbb{R}$, and each of them serves as a fiber of $f_i$. The lemma then follows from the isomorphism
\begin{equation}\pi_2(U_i,L_\star^i)\cong\pi_2(S^1\times\mathbb{R}^{n-1})=0.\end{equation}
\end{proof}
We now verify that the gluing condition (\ref{eq:gluecon}) is satisfied for the regular fibers of Lagrangian fibration $\pi_H$, so that $\mathrm{supp}H^\ast\mathcal{F}(L_\star,\xi_\star)$ is well-defined as a rigid analytic subvariety of $X^\vee$.
\begin{proposition}\label{proposition:glue}
The rigid analytic subvarieties $\mathrm{supp}H^\ast\mathcal{F}(L_\star^1,\xi_\star^1)\subset U_1^\vee$ and $\mathrm{supp}H^\ast\mathcal{F}(L_\star^2,\xi_\star^2)\subset U_2^\vee$ patch together under the wall-crossing map $\mathit{\Upsilon}_{12}:U_1^\vee\dashrightarrow U_2^\vee$ to produce a rigid analytic subvariety $\mathrm{supp}H^\ast\mathcal{F}(L_\star,\xi_\star)\subset X^\vee$.
\end{proposition}
\begin{proof}
From (\ref{eq:chart}), we see that $U_1^\vee\cong U_2^\vee\cong(\mathbb{K}^\ast)^n$. Denote the coordinates on $U_1^\vee$ and $U_2^\vee$ by $(x_1,\cdot\cdot\cdot,x_{n-1},z)$ and $(x_1',\cdot\cdot\cdot,x_{n-1}',y)$ respectively. Let $L_b=\pi_G^{-1}(b)$ be a regular fiber of the SYZ fibration $\pi_G$, which is equipped with the unitary rank one local system $\xi_b\in H^1(L_b,U_\mathbb{K})$. By the SYZ mirror constructions recalled in Section \ref{section:trans} or Appendix \ref{section:syz-tcy}, we know that the Lagrangian brane $(L_b,\xi_b)$ determines a point of the mirror $X^\vee$. In order to write down the coordinates of this point, fix a reference fiber $L_\mathrm{ref}$ of $\pi_G$. By (\ref{eq:mircoo}) and (\ref{eq:mircoo2}) from Appendix \ref{section:syz-tcy}, we have
\begin{equation}x_j=x_j'=T^{\int_{\mathit{\Theta_j}}\omega_{\overline{X}}}\xi_b(\theta_j),j=1,\cdot\cdot\cdot,n-1,\end{equation}
where $\{\theta_j\}$ is a set of generators of $H_1(L_b,\mathbb{Z})$ which span a regular orbit of the Hamiltonian $T_\mathbb{R}(N_\nu)$-action, and $\{\mathit{\Theta}_j\}$ are cylinders traced out by these loops with their boundaries lying on $L_b$ and the reference fiber $L_\mathrm{ref}$. In particular, this implies that the wall-crossing map $\mathit{\Upsilon}_{12}:U_1^\vee\dashrightarrow U_2^\vee$ is the identity for the first $n-1$ coordinates.

We claim that for $i=1,2$, the rigid analytic subvarieties $\mathrm{supp}H^\ast\mathcal{F}(L_\star^i,\xi_\star^i)\subset U_i^\vee$ are defined by the linear equations
\begin{equation}
x_1=s_1,\cdot\cdot\cdot,x_{n-1}=s_{n-1}
\end{equation}
and
\begin{equation}
x_1'=s_1,\cdot\cdot\cdot,x_{n-1}'=s_{n-1},
\end{equation}
where $s_i\in\mathbb{K}^\ast$ are fixed constants independent of $i$, so in particular they can be patched together under $\mathit{\Upsilon}_{12}$ to the affine conic in $\mathbb{K}^2$ defined by
\begin{equation}
yz=g(s_1,\cdot\cdot\cdot,s_{n-1}).
\end{equation}

To see this, let $L_\star$ be a regular fiber of $\pi_H$. By Proposition \ref{proposition:inter}, its restriction $L_\star^i\subset U_i$ fibers as a $T^{n-1}$ bundle over the submanifold $Q_i\subset B_i$ defined by
\begin{equation}Q_i=\big\{(b_1,\mathbf{b}_2)\in B_i|\mathbf{b}_2=C_4\big\},\end{equation}
where $b_1$ and $\mathbf{b}_2$ are respectively standard coordinates on the $\mathbb{R}$ and $\mathbb{R}^{n-1}$ factors, and $C_4\in\mathbb{R}^{n-1}$ is a constant vector. By Lemma \ref{lemma:unob}, $L_\star^i\subset U_i$ is a tautologically unobstructed Lagrangian submanifold, so in particular the coherent sheaves $H^\ast\mathcal{F}(L_\star^i,\xi_\star^i)$ are well-defined for any choice of $\xi_\star\in H^1(L_\star,U_\mathbb{K})$, where $\xi_\star^i$ is the restriction of $\xi_\star$ to $L_\star^i$.

To determine $\mathrm{supp}H^\ast\mathcal{F}(L_\star,\xi_\star)$, we need to consider the Floer cohomology groups
\begin{equation}\label{eq:Floer}\mathit{HF}^\ast\big((L_b,\xi_b),(L_\star^i,\xi_\star^i)\big)\end{equation}
for every $b\in B_i$. Since $L_b\cap L_\star^i\neq\emptyset$ precisely when $b\in Q_i$, by the isotopy invariance of Floer cohomology, we see that
\begin{equation}\mathit{HF}^\ast\big((L_b,\xi_b),(L_\star^i,\xi_\star^i)\big)\neq0\end{equation}
only if $b\in Q_i$. Since the Lagrangian submanifolds $L_b$ and $L_\star^i$ intersect cleanly for any $b\in Q_i$, and both of $L_b$ and $L_\star^i$ are tautologically unobstructed, we have
\begin{equation}\mathit{HF}^\ast\big((L_b,\xi_b),(L_\star^i,\xi_\star^i)\big)=H^\ast\big(L_b\cap L_\star^i,(\xi_b-\xi_\star^i)|(L_b\cap L_\star^i)\big),\end{equation}
where the right hand side is ordinary cohomology with local coefficients, from which we deduce $\mathit{HF}^\ast\big((L_b,\xi_b),(L_\star^i,\xi_\star^i)\big)\neq0$ if and only if $\xi_b=\xi_\star^i$ in $H^1(L_b\cap L_\star^i,U_\mathbb{K})$.

The non-vanishing conditions of the Floer cohomology (\ref{eq:Floer}) is equivalent to requiring that
\begin{itemize}
	\item $T^{\int_{\mathit{\Theta_j}}\omega_{\overline{X}}}$ remains constant,
	\item $\xi_b(\theta_j)=\xi_\star(\theta_j)$,
\end{itemize}
for $1\leq j\leq n-1$, where the first condition above follows from the invariance of symplectic area in a relative homotopy class with Lagrangian boundary condition, namely $\pi_2(U_i,L_\star^i)$. These two conditions together imply the invariance of the coordinates $x_j$ and $x_j'$, which completes the proof.
\end{proof}

We have proved:
\begin{theorem}\label{theorem:tcy}
The twin Lagrangian fibration $B\xleftarrow{\pi_G}X\xrightarrow{\pi_H}B_\star$ on $X$ is induced from the affine conic bundle structure $p_0:X^\vee\rightarrow(\mathbb{K}^\ast)^{n-1}$ on its mirror $X^\vee$ in the sense of Definition \ref{definition:twin-ind}, namely
\begin{equation}\mathrm{supp}H^\ast\mathcal{F}(L_\star,\xi_\star)=p_0^{-1}(s_1,\cdot\cdot\cdot,s_{n-1})\end{equation}
for any regular fiber $L_\star$ of $\pi_H$ equipped with any $\xi_\star\in H^1(L_\star,U_\mathbb{K})$. 
\end{theorem}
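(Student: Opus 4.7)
The plan is to compute the family Floer module $\mathcal{F}(L_\star,\xi_\star)$ chart by chart over the two SYZ chambers $U_1^\vee, U_2^\vee \subset X^\vee$ separated by the wall-crossing transformation $\Upsilon_{12}$, and then verify the gluing condition (\ref{eq:gluecon}) so that the local supports patch to a globally defined rigid analytic subvariety coinciding with $p_0^{-1}(s)$.

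Inside a single chamber $U_i \subset X$, Lemma \ref{lemma:unob} guarantees that $L_\star^i = L_\star|_{U_i}$ is tautologically unobstructed, so the Abouzaid-Fukaya family Floer module $\mathcal{F}(L_\star^i,\xi_\star^i)$ is defined on $U_i^\vee$ with stalk $HF^*((F_b,\xi_b),(L_\star,\xi_\star))$ at $(F_b,\xi_b) \in U_i^\vee$. Exploiting the shared $T_\mathbb{R}(N_\nu)$-invariance of $F_b$ and $L_\star$, I would perform symplectic reduction at $\mu_\nu = C_2$: the reduced space is the $(w+1)$-plane punctured at $w = -1$, in which $F_b$ projects to the circle $\{|w+1| = C_1\}$ and $L_\star$ to the ray $\{\arg(w+1) = C_3\}$. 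By Proposition \ref{proposition:inter}, $F_b \cap L_\star$ is empty when $\mu_\nu(F_b) \neq C_2$ and a single clean $T^{n-1}$-orbit otherwise, which immediately forces the valuation constraint $\mathrm{val}(x_i(F_b,\xi_b)) = \mathrm{val}(s_i)$ on the support.

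On the locus $\mu_\nu(F_b) = C_2$ I would then compute $HF^*$ via the Morse-Bott formalism of \cite{fooo1}. The $E_1$-page is $H^*(T^{n-1},\mathbb{K})$ tensored with an orientation line, and the nontrivial differentials count holomorphic strips with boundary on $F_b \cup L_\star$. Under the symplectic reduction these strips are enumerated by the bigons bounded by arcs of the circle and segments of the ray in the punctured $(w+1)$-plane, together with their $T_\mathbb{R}(N_\nu)$-equivariant lifts and their iterates winding around the puncture. Tracking the resulting energies and holonomies, the differential should acquire a closed form in which factors of the shape $x_i(F_b,\xi_b) - s_i(L_\star,\xi_\star)$ appear; vanishing of the differential is then equivalent to $x_i(F_b,\xi_b) = s_i$, so the local support is cut out exactly by $p_0^{-1}(s) \cap U_i^\vee$. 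Finally, since the wall-crossing transformation $\Upsilon_{12}$ recalled in Appendix \ref{section:syz-tcy} modifies only the conic-fiber coordinates $(y,z)$ while fixing the base coordinates $x_1,\ldots,x_{n-1}$, the gluing condition (\ref{eq:gluecon}) is automatic, and the patched support is precisely $p_0^{-1}(s)$.

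The main obstacle is the explicit Morse-Bott strip count in the middle step: matching the weighted strip sum to a polynomial in the mirror coordinates requires carefully identifying the disc contributions in the reduced $(w+1)$-plane with the wall-crossing counts that define $g(x)$ in \cite{cll}, while keeping track of Maslov indices and orientations of the winding iterates. A secondary subtlety is to confirm that no extra instanton corrections arise beyond those visible in the two-dimensional reduction; this should be controlled by the explicit and disjoint form of the SYZ walls of $\pi_G$ established there, which ensures that scattering phenomena do not contribute.
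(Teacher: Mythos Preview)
Your overall architecture coincides with the paper's: restrict to each chamber $U_i$ via Lemma~\ref{lemma:unob}, determine the support of $H^*\mathcal{F}(L_\star^i,\xi_\star^i)$ there, and invoke that the wall-crossing $\mathit{\Upsilon}_{12}$ fixes $x_1,\dots,x_{n-1}$ so that the pieces glue to a fibre of $p_0$. Where you diverge is in the computation of $\mathit{HF}^*\big((L_b,\xi_b),(L_\star^i,\xi_\star^i)\big)$ for $b\in Q_i$, and here your expectation is off. Inside $U_i$ there are \emph{no} nonconstant holomorphic strips between $L_b$ and $L_\star^i$: in the reduced $(w+1)$-plane the restriction $L_\star^i$ is only the half-ray with $|w+1|>1$ (respectively $0<|w+1|<1$), and a half-ray meeting a circle once bounds no bigon in either region. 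Your ``iterates winding around the puncture'' cannot occur, because the boundary arc lying on the open half-ray cannot close up nontrivially. The paper therefore bypasses any strip count and writes directly
\[
\mathit{HF}^*\big((L_b,\xi_b),(L_\star^i,\xi_\star^i)\big)=H^*\big(T^{n-1},(\xi_b-\xi_\star^i)|_{T^{n-1}}\big),
\]
ordinary cohomology with local coefficients on the single clean component.

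The constraint $x_j=s_j$ thus has nothing to do with a strip-weighted differential; it comes from (i) the valuation condition $b\in Q_i$, which pins $T^{\int_{\mathit{\Theta}_j}\omega_{\overline{X}}}$, together with (ii) the local-coefficient condition $\xi_b(\theta_j)=\xi_\star(\theta_j)$ needed for $H^*(T^{n-1},\xi_b-\xi_\star)$ to be nonzero. Your stated $E_1$-page $H^*(T^{n-1},\mathbb{K})$ is the untwisted one; if you genuinely push the local systems into the differential and then discover there are no strips to carry them, you will wrongly conclude that the support is the entire valuation locus $\mathrm{val}(x_j)=\mathrm{val}(s_j)$. In particular the polynomial $g(\mathbf{x})$ from \cite{cll} never enters the support computation at all---it only governs the conic-fibre coordinates $y,z$, which remain unconstrained on a fibre of $p_0$---so your ``main obstacle'' of matching the strip sum to $g$ is a phantom. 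The fix is exactly what the paper does: incorporate the local system into the Morse complex on the clean intersection from the outset, after which the argument is a two-line identification of mirror coordinates.
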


\subsection{Twin Lagrangian fibrations on blowups of toric varieties}\label{section:twin-blp}

We have a parallel story for $\overline{X}^\vee$. Recall that two Lagrangian fibrations $\pi_A$ and $\pi_L$ on $X^\vee=\overline{X}^\vee\setminus D^\vee$ have been described in Section $\ref{section:lag-blp}$.

As we have mentioned in the introduction, the twin Lagrangian fibration $B^\vee\xleftarrow{\pi_A}X^\vee\xrightarrow{\pi_L}B_\star^\vee$ on $X^\vee$ is expected to be mirror to the fibration $w_0:X\rightarrow\mathbb{K}^\ast$. Recall that up to an additive constant, $w_0$ is the defining function of $K_{\overline{X}}$, so it has a unique singular fiber and the regular fibers of $w_0$ are isomorphic to $(\mathbb{K}^\ast)^{n-1}$.

Recall from Appendix \ref{section:converse} that the base $B^\vee$ of $\pi_L$ is separated by the wall $\mathcal{W}_\blacklozenge=\Delta\times\mathbb{R}_{>-\varepsilon}$ into chambers $B_\alpha^\vee$ parametrized by the finite set $A$, i.e.
\begin{equation}B^\vee\setminus\mathcal{W}_\blacklozenge=\bigsqcup_{\alpha\in A}B_\alpha^\vee.\end{equation}
\begin{lemma}\label{lemma:unob1}
The Lagrangian submanifold $L_\star^\alpha=L_\star|U_\alpha^\vee$ with $\alpha\in A$ is tautologically unobstructed in $U_\alpha^\vee$, where $U_\alpha^\vee=\pi_A^{-1}(B_\alpha^\vee)$.
\end{lemma}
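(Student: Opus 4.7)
The plan is to mirror the argument of Lemma \ref{lemma:unob}. The decisive structural observation is that the Lagrangian fibrations $\pi_A$ and $\pi_L$ are constructed from the same Hamiltonian $S^1$-action $(\ref{eq:ciract})$ and the same family of Moser-type homeomorphisms $\{\phi_\lambda\}$, differing only by post-composition with $\mathrm{Log}_\tau$ versus $\mathrm{Arg}$ on the $V_0$-factor of the reduced space. Consequently the critical set of $\pi_L$ inside $X^\vee$ coincides with that of $\pi_A$, and both live inside $\pi_A^{-1}(\mathcal{W}_\blacklozenge)$.

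First I would verify that $\pi_L$ restricted to $U_\alpha^\vee$ is a smooth Lagrangian fiber bundle onto $B_\star^\vee \cong T^{n-1} \times \mathbb{R}_{>-\varepsilon}$. Topologically, the fiber $L_\star^\alpha$ is cut out of $L_\star \cong \mathbb{R}^{n-1} \times S^1$ by confining the $\mathrm{Log}$-coordinate to the chamber $B_\alpha \subset \mathbb{R}^{n-1}$; since each chamber cut out by the tropical hypersurface $\mathit{\Pi}_0$ is convex polyhedral and the nearly tropical hypothesis preserves the combinatorial type, $B_\alpha$ is contractible and $L_\star^\alpha$ is homotopy equivalent to $S^1$.

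The long exact sequence of the fibration then yields
\begin{equation}
\pi_2(U_\alpha^\vee, L_\star^\alpha) \cong \pi_2(B_\star^\vee) = \pi_2(T^{n-1} \times \mathbb{R}_{>-\varepsilon}) = 0.
\end{equation}
Any pseudoholomorphic disc with boundary on $L_\star^\alpha$ therefore represents the zero class in $\pi_2(U_\alpha^\vee, L_\star^\alpha)$, has vanishing symplectic area, and must be constant by positivity of energy; since this reasoning is independent of the tame almost complex structure chosen on $U_\alpha^\vee$, tautological unobstructedness follows.

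The hard part will be the first step, namely verifying that removing $\pi_A^{-1}(\mathcal{W}_\blacklozenge)$ simultaneously kills every singular fiber of $\pi_L$. This amounts to tracing where the exceptional $S^1$-orbits along $\widetilde V \sqcup \widetilde H$ and the piecewise-smooth locus $\lambda = 0$ project under $\pi_A$; the needed information is already contained in Lemma \ref{lemma:aak} and the description of the singular fibers of $\pi_A$ following Theorem \ref{theorem:aak}, so once the bookkeeping is set up carefully the argument should close cleanly.
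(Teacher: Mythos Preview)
Your proposal is correct and follows essentially the same route as the paper: identify that the critical locus of $\pi_L$ consists of the degenerate $S^1$-orbits and hence is removed once one restricts to $U_\alpha^\vee=\pi_A^{-1}(B_\alpha^\vee)$, so that $\pi_L|_{U_\alpha^\vee}$ becomes a genuine fiber bundle over $B_\star^\vee\cong T^{n-1}\times\mathbb{R}_{>-\varepsilon}$, and then invoke the long exact sequence of the fibration to conclude $\pi_2(U_\alpha^\vee,L_\star^\alpha)\cong\pi_2(B_\star^\vee)=0$. The paper is terser about what you call the ``hard part'' and simply asserts the fiber is $\mathbb{R}^{n-1}\times S^1$, whereas you more carefully note it is (chamber)$\times S^1\simeq S^1$; either suffices for the homotopy argument.
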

\begin{proof}
Recall from Section \ref{section:lag-blp} that we have an identification between the set of critical points of the two Lagrangian fibrations $\pi_A$ and $\pi_L$, therefore after removing the fibers of $\pi_A$ over the wall $\mathcal{W}_\blacklozenge\subset B^\vee$, we have removed the critical locus of $\pi_L$ as well. Taking the inverse image of $\pi_A$ over the chamber $B_\alpha^\vee$, we get an open subset $U_\alpha^\vee\subset V_0$, without loss of generality we may assume
\begin{equation}
U_\alpha^\vee=\{a_i<|x_i|<b_i,1\leq i\leq n-1\}\subset(\mathbb{C}^\ast)^{n-1}.
\end{equation}
The fibration $\pi_L$, when restricted to $U_\alpha^\vee$, then become a Lagrangian $\mathbb{R}^{n-1}\times S^1$ bundle $f_\alpha:U_\alpha^\vee\rightarrow T^{n-1}\times\mathbb{R}_{>-\varepsilon}$. The lemma then follows easily from the isomorphism
\begin{equation}\pi_2(U_\alpha^\vee,L_\star^\alpha)\cong\pi_2(T^{n-1}\times\mathbb{R}_{>-\varepsilon})=0.\end{equation}
\end{proof}
As in the toric Calabi-Yau case, we need to verify here that the gluing condition (\ref{eq:gluecon}) holds for regular fibers of the Lagrangian fibration $\pi_L$.
\begin{proposition}
The rigid analytic subvarieties $\mathrm{supp}H^\ast\mathcal{F}(L_\star^\alpha,\xi_\star^\alpha)\subset U_\alpha$, $\alpha\in A$, can be patched together under the wall-crossing map $\mathit{\Upsilon}_{\alpha\beta}:U_\alpha\dashrightarrow U_\beta$ to produce a rigid analytic subvariety $\mathrm{supp}H^\ast\mathcal{F}(L_\star,\xi_\star)\subset X$.
\end{proposition}
\begin{proof}
The proof is analogous to that of Proposition \ref{proposition:glue}. By (\ref{eq:chart}), we have $U_\alpha\cong(\mathbb{K}^\ast)^n$ for each $\alpha$. Denote the coordinates on $U_\alpha$ by $(v_{\alpha,1},\cdot\cdot\cdot,v_{\alpha,{n-1}},w_{\alpha,0})$. Let $L_b=\pi_A^{-1}(b)$ be a regular fiber of the SYZ fibration $\pi_A$ equipped with the unitary rank one local system $\xi_b\in H^1(L_b,U_\mathbb{K})$. By the mirror constructions recalled in Section \ref{section:trans} or Appendix \ref{section:converse}, we know that the Lagrangian brane $(L_b,\xi_b)$ determines a point of the mirror $X$. To write down the coordinates of this point, fix a reference fiber $L_\mathrm{ref}$ of $\pi_A$. By (\ref{eq:sfcalp}) from Appendix \ref{section:converse}, we have
\begin{equation}w_{\alpha,0}=T^{\int_{\mathit{\Theta}_0}\omega_\varepsilon}\xi_b(\theta_0)\end{equation}
for every $\alpha\in A$, where $\theta_0\in H^1(L_b,\mathbb{Z})$ corresponds to the $S^1$-orbit of the Hamiltonian action $(\ref{eq:ciract})$ and it traces out the cylinder $\mathit{\Theta}_0$ under the isotopy from $L_b$ to the reference fiber $L_\mathrm{ref}$. In particular, this implies that all the wall-crossing maps $\mathit{\Upsilon}_{\alpha\beta}:U_\alpha\dashrightarrow U_\beta$ are the identity for the last coordinate.

We claim that for $\alpha\in A$, the rigid analytic subvariety $\mathrm{supp}H^\ast\mathcal{F}(L_\star^\alpha,\xi_\star^\alpha)\subset U_\alpha$ is defined by the linear equation
\begin{equation}
w_{\alpha,0}=s
\end{equation}
where $s\in\mathbb{K}^\ast$ is a fixed constant independent of $\alpha$, so in particular they can be patched together under $\mathit{\Upsilon}_{\alpha\beta}$ to the hypersurface $w_0^{-1}(s)\subset X$.

To see this, let $L_\star$ be a regular fiber of $\pi_L$. By Proposition \ref{proposition:clean}, its restriction $L_\star^\alpha$ in $U_\alpha^\vee$ fibers as an $S^1$ bundle over the submanifold $Q_\alpha\subset B_\alpha^\vee$ defined by
\begin{equation}Q_\alpha=\big\{(\mathbf{b}_1,b_2)\in B^\vee_\alpha|b_2=C_8\big\},\end{equation}
where $\mathbf{b}_1$ is the standard coordinate on $\mathbb{R}^{n-1}$ and $C_8\in\mathbb{R}_{>-\varepsilon}$ is some fixed constant. By Lemma \ref{lemma:unob1}, $L_\star^\alpha\subset U_\alpha^\vee$ is a tautologically unobstructed Lagrangian submanifold, so in particular the coherent sheaves $H^\ast\mathcal{F}(L_\star^\alpha,\xi_\star^\alpha)$ are well-defined for any choice of the local system $\xi_\star\in H^1(L_\star,U_\mathbb{K})$, where $\xi_\star^\alpha$ denotes the restriction of $\xi_\star$ to $L_\star^\alpha$.

In order to to determine $\mathrm{supp}H^\ast\mathcal{F}(L_\star,\xi_\star)$, we have to study the Floer cohomology groups
\begin{equation}\label{eq:Floer1}\mathit{HF}^\ast\big((L_b,\xi_b),(L_\star^\alpha,\xi_\star^\alpha)\big)\end{equation}
for every $b\in B_\alpha^\vee$. Since $L_b\cap L_\star^\alpha\neq\emptyset$ precisely when $b\in Q_\alpha$, we see that
\begin{equation}\mathit{HF}^\ast\big((L_b,\xi_b),(L_\star^\alpha,\xi_\star^\alpha)\big)\neq0\end{equation}
only when $b\in Q_\alpha$. Since the Lagrangian submanifolds $L_b$ and $L_\star^\alpha$ intersect over points of $Q_\alpha$, and both of $L_b$ and $L_\star^\alpha$ are tautologically unobstructed as Lagrangian submanifolds in $U_\alpha^\vee$, we have
\begin{equation}\mathit{HF}^\ast\big((L_b,\xi_b),(L_\star^\alpha,\xi_\star^\alpha)\big)=H^\ast\big(L_b\cap L_\star^\alpha,(\xi_b-\xi_\star^\alpha)|(L_b\cap L_\star^\alpha)\big),\end{equation}
where the right hand side is ordinary cohomology with local coefficients. This shows that $\mathit{HF}^\ast\big((L_b,\xi_b),(L_\star^\alpha,\xi_\star^\alpha)\big)\neq0$ if and only if $\xi_b=\xi_\star^\alpha$ in $H^1(L_b\cap L_\star^\alpha,U_\mathbb{K})$.

The non-vanishing conditions of the Floer cohomology (\ref{eq:Floer1}) can be equally expressed as
\begin{itemize}
	\item $T^{\int_{\mathit{\Theta_0}}\omega_\varepsilon}$ remains constant,
	\item $\xi_b(\theta_0)=\xi_\star(\theta_0)$,
\end{itemize}
where the first condition above follows from the invariance of symplectic area inside a relative homotopy class with Lagrangian boundary conditions, namely $\pi_2(X^\vee,L_\star^\alpha)$. These two conditions together imply the invariance of the coordinate $w_{\alpha,0}$, which completes the proof.
\end{proof}

We summarize our main result in this subsection in the following theorem.
\begin{theorem}\label{theorem:blp}
Let $\overline{X}^\vee$ be the blow up of $V\times\mathbb{C}$ along $H\times0$, where $V$ is a toric variety satisfying Assumption \ref{assumption:3.1}, and $H\subset V$ is a nearly tropical hypersurface. Then the twin Lagrangian fibration $B^\vee\xleftarrow{\pi_A}X^\vee\xrightarrow{\pi_L}B_\star^\vee$ on $X^\vee$ is induced from the fibration $w_0:X\rightarrow\mathbb{K}^\ast$ on the mirror manifold $X$, in the sense that
\begin{equation}\mathrm{supp}H^\ast\mathcal{F}(L_\star,\xi_\star)=w_0^{-1}(s)\end{equation}
for any regular fiber $L_\star$ of $\pi_L$ equipped with any $\xi_\star\in H^1(L_\star,U_\mathbb{K})$.
\end{theorem}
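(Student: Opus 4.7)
The plan is to mimic the strategy of Theorem \ref{theorem:tcy}, working chamber by chamber and using the family Floer machinery of Section \ref{section:trans} together with Morse-Bott Lagrangian intersection theory. Fix a regular fiber $L_\star$ of $\pi_L$ together with $\xi_\star\in H^1(L_\star,U_\mathbb{K})$. From the proof of the previous proposition, the Lagrangian $L_\star$ can be written as
\begin{equation*}
L_\star=\big\{\mathrm{Arg}\circ\phi_{C_6}(\mathbf{x})=C_7,\ \mu_0(\mathbf{p})=C_6\big\},
\end{equation*}
so any regular fiber $L_b=\pi_A^{-1}(b)$ meeting $L_\star$ must lie over $b\in B^\vee$ with $\mu_0$-coordinate equal to $C_6$; moreover, when nonempty, $L_b\cap L_\star$ is a single Hamiltonian $S^1$-orbit.

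First I would localize the construction to each chamber $B_\alpha^\vee$ of the wall decomposition $B^\vee\setminus\mathcal{W}_\blacklozenge=\bigsqcup_{\alpha\in A}B_\alpha^\vee$ recalled in Appendix \ref{section:converse}. By Lemma \ref{lemma:unob1}, the restriction $L_\star^\alpha=L_\star|U_\alpha^\vee$ is tautologically unobstructed in $U_\alpha^\vee$, so the local family Floer module $\mathcal{F}(L_\star^\alpha,\xi_\star^\alpha)$ is well-defined and its support cuts out a rigid analytic subvariety of the chart $U_\alpha$ of the mirror $X$. For $b\in B_\alpha^\vee$, Hamiltonian isotopy invariance of Floer cohomology forces $\mathit{HF}^\ast\big((L_b,\xi_b),(L_\star^\alpha,\xi_\star^\alpha)\big)=0$ whenever $L_b\cap L_\star=\emptyset$, i.e.\ outside the codimension-1 slice $Q_\alpha=\{\mu_0=C_6\}\cap B_\alpha^\vee$. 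On $Q_\alpha$ the intersection is clean and one-dimensional, and since both Lagrangians are tautologically unobstructed inside $U_\alpha^\vee$, Morse-Bott Floer theory gives
\begin{equation*}
\mathit{HF}^\ast\big((L_b,\xi_b),(L_\star^\alpha,\xi_\star^\alpha)\big)\cong H^\ast\big(L_b\cap L_\star^\alpha,(\xi_b-\xi_\star^\alpha)|(L_b\cap L_\star^\alpha)\big),
\end{equation*}
which is nonzero iff $\xi_b$ and $\xi_\star^\alpha$ agree along the generator of the $S^1$-orbit $L_b\cap L_\star^\alpha$.

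Next I would translate these two conditions (fixed value of $\mu_0$ and matching holonomy along the $S^1$-orbit class $\gamma_0$) into the mirror coordinates on $U_\alpha$. The SYZ mirror construction in Appendix \ref{section:converse} produces the distinguished monomial $w_0$ as (up to a multiplicative constant) $T^{\int_{D_0}\omega_\varepsilon}\xi_b(\gamma_0)$, where $D_0$ is the $\omega_\varepsilon$-area of a disc with boundary $\gamma_0$. This area is governed by $\mu_0$ via the moment map identity $\int_{D_0}\omega_\varepsilon=\mu_0+\varepsilon$, so the two Floer-theoretic constraints identify with the single equation $w_0=s$, where $s$ is determined by $(C_6,\xi_\star|\gamma_0)$. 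Hence chamber-locally $\mathrm{supp}H^\ast\mathcal{F}(L_\star^\alpha,\xi_\star^\alpha)=w_0^{-1}(s)\cap U_\alpha$.

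The main obstacle is verifying the gluing condition $(\ref{eq:gluecon})$ across the walls $\mathcal{W}_\blacklozenge$, so that the local supports assemble into a global rigid analytic subvariety of $X$. The walls in $B^\vee$ are those over which discs bubble off, and the wall-crossing transformations $\mathit{\Upsilon}_{\alpha\beta}$ for the mirror of $X^\vee$ are computed in Appendix \ref{section:converse}; what I would need is that these transformations fix the coordinate $w_0$. This is plausible because $w_0$ is the lift of the defining function of the toric boundary divisor of $\overline{X}$ and therefore does not receive quantum corrections (the relevant bubbled discs all sweep out classes disjoint from the $S^1$-orbit class $\gamma_0$). Once this invariance of $w_0$ under $\mathit{\Upsilon}_{\alpha\beta}$ is established, the local identifications $\mathrm{supp}H^\ast\mathcal{F}(L_\star^\alpha,\xi_\star^\alpha)=w_0^{-1}(s)\cap U_\alpha$ patch into the claimed global equality $\mathrm{supp}H^\ast\mathcal{F}(L_\star,\xi_\star)=w_0^{-1}(s)$, completing the proof.
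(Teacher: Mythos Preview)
Your proposal is correct and follows essentially the same approach as the paper's proof: restrict to each chamber $B_\alpha^\vee$, use Lemma \ref{lemma:unob1} for tautological unobstructedness, compute the Floer cohomology via the Morse--Bott clean intersection formula to get the two conditions (fixed $\mu_0$-level and matching holonomy along the $S^1$-orbit), identify these with the single equation $w_{\alpha,0}=\mathrm{const}$ via the mirror coordinate description, and then glue using the fact that $w_0$ is invariant under the wall-crossing maps $\mathit{\Upsilon}_{\alpha\beta}$. The only cosmetic difference is that the paper phrases the area condition in terms of the cylinder $\mathit{\Theta}_0$ traced out under isotopy from a reference fiber (so that $\int_{\mathit{\Theta}_0}\omega_\varepsilon$ is determined by the $\mu_0$-value of $L_b$ relative to $L_\mathrm{ref}$), whereas you invoke the moment map identity for a bounding disc; both encode the same constraint up to a multiplicative constant, and the invariance of $w_0$ under wall-crossing is exactly the content of Lemma \ref{lemma:3.7} and the Proposition following it in Appendix \ref{section:converse}.
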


Since $w_0:X\rightarrow\mathbb{K}^\ast$ has a unique singular fiber, by carefully examining the definition of coordinates on the mirror, we have a somehow stronger conclusion.
\begin{proposition}\label{proposition:refine}
For a generic choice of the regular fiber $L_\star$ of $\pi_L$ and any $\xi_\star\in H^1(L_\star,U_\mathbb{K})$, $\mathrm{supp}H^\ast\mathcal{F}(L_\star,\xi_\star)$ is a regular fiber of $w_0:X\rightarrow\mathbb{K}^\ast$.
\end{proposition}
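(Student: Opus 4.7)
The plan is to combine Theorem \ref{theorem:blp} with an explicit description of how the image value $s \in \mathbb{K}^\ast$ of the fiber $\mathrm{supp}H^\ast\mathcal{F}(L_\star,\xi_\star) = w_0^{-1}(s)$ depends on the parameters of the pair $(L_\star,\xi_\star)$, and then to show that the locus where $s$ equals the unique singular value $s_0$ of $w_0:X\rightarrow\mathbb{K}^\ast$ is a proper (hence nowhere dense) subset of the parameter space.

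First I would recall from the proof of Theorem \ref{theorem:blp} that, after choosing a chamber $B_\alpha^\vee \subset B^\vee$ and identifying the associated chart $U_\alpha \subset X$ with its local mirror coordinates, the mirror coordinate $w_{\alpha,0}$ is given up to a multiplicative constant by
\begin{equation}
w_{\alpha,0} = T^{\int_{\mathit{\Theta}_0}\omega_\varepsilon}\,\xi_b(\theta_0),
\end{equation}
where $\theta_0 \in H_1(L_b,\mathbb{Z})$ is the generator corresponding to the $S^1$-orbit of the Hamiltonian action $(\ref{eq:ciract})$ and $\mathit{\Theta}_0$ is the cylinder swept out by $\theta_0$ under the isotopy to the reference fiber. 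Because $L_\star$ is homeomorphic to $\mathbb{R}^{n-1} \times S^1$ with its unique $S^1$ factor being precisely a Hamiltonian orbit of $(\ref{eq:ciract})$, the class $\theta_0$ lifts canonically to $H_1(L_\star,\mathbb{Z})$, and so the data $(L_\star,\xi_\star)$ determines the value $s = w_0(L_\star,\xi_\star)$ via the same formula.

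Next I would analyze the dependence of $s$ on the parameters. The moment map value $\lambda = \mu_0$ parametrizes $L_\star$ in the $\mathbb{R}_{>-\varepsilon}$-direction of $B_\star^\vee$ and controls the symplectic area $\int_{\mathit{\Theta}_0}\omega_\varepsilon$ (monotonically on each smooth piece, by Stokes's theorem applied to the $S^1$-invariant primitive of $\omega_\varepsilon$); meanwhile the holonomy $\xi_\star(\theta_0) \in U_\mathbb{K}$ is a free parameter orthogonal to $\lambda$. Consequently, as $(L_\star,\xi_\star)$ varies, the induced map
\begin{equation}
(L_\star,\xi_\star) \longmapsto s \in \mathbb{K}^\ast
\end{equation}
is open in the sense that its image contains an open subset of $\mathbb{K}^\ast$: the $T$-adic valuation of $s$ ranges over an open interval as $\lambda$ varies, and the leading coefficient of $s$ in $U_\mathbb{K}$ varies independently. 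Since $w_0:X\rightarrow\mathbb{K}^\ast$ is described in Appendix \ref{section:converse} as a shift of the defining section of $D_{\overline{X}}$ and admits a unique singular fiber over a single point $s_0 \in \mathbb{K}^\ast$, the preimage $\{s = s_0\}$ of this point under the parametrization is cut out by a single non-trivial equation, hence is a proper rigid analytic subvariety of the parameter space. A generic $L_\star$ therefore gives $s \neq s_0$.

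The main subtlety will be to justify rigorously that the dependence of $s$ on $\lambda$ is genuinely non-constant, i.e.\ that $\frac{\partial}{\partial\lambda}\int_{\mathit{\Theta}_0}\omega_\varepsilon \neq 0$ on a dense open subset of the $\lambda$-axis; this reduces, via the symplectic reduction setup of Section \ref{section:lag-blp} and Lemma \ref{lemma:aak}, to the fact that the $S^1$-orbit swept out in passing from one reduced level to another has positive symplectic area, which follows from $\omega_\varepsilon$ being a K\"{a}hler form compatible with the Hamiltonian action $(\ref{eq:ciract})$. Once this is in place, the genericity statement follows at once from Theorem \ref{theorem:blp}.
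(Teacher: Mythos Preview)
Your proposal is correct and follows essentially the same line as the paper: both arguments observe that $s=w_{\alpha,0}$ is determined by the formula $T^{\int_{\mathit{\Theta}_0}\omega_\varepsilon}\xi_\star(\theta_0)$, that the valuation of $s$ depends only on the moment-map level $\lambda=C_8$ of $L_\star$, and that this dependence is non-constant, so that the single singular value of $w_0$ is avoided for generic $\lambda$.

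The paper is more direct on the key step: rather than invoking Stokes's theorem or K\"ahler positivity to show $\partial_\lambda\int_{\mathit{\Theta}_0}\omega_\varepsilon\neq0$, it simply cites from \cite{aak} the explicit identity $\int_{\mathit{\Theta}_0}\omega_\varepsilon=C_8-\lambda_{\mathrm{ref}}$ (up to a multiplicative constant), which is the standard fact that the area swept by an $S^1$-orbit between moment levels equals the difference of the levels. This immediately forces $C_8$ to a single value $\lambda_1$ if $s$ is to hit the singular point $-T^\varepsilon$, and genericity follows. Your discussion of openness of the joint map $(L_\star,\xi_\star)\mapsto s$ and of $\xi_\star(\theta_0)$ as a free parameter is correct but unnecessary: since the statement requires the conclusion for \emph{all} $\xi_\star$, the relevant observation is just that $\mathrm{val}(s)$ is independent of $\xi_\star$ and strictly monotone in $\lambda$, which you do note in passing.
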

\begin{proof}
Fix a chamber $B_\alpha^\vee$ to work with, since $w_0$ has a unique singular fiber $w_0^{-1}(-T^\varepsilon)=D_{\overline{X}}$, we only need to show that the mirror coordinate $w_{\alpha,0}$ avoids the value $-T^\varepsilon\in\mathbb{K}^\ast$. But by the definition of $w_{\alpha,0}$ in Appendix \ref{section:converse}, this forces the symplectic area $\int_{\mathit{\Theta}_0}\omega_{\varepsilon}$ to be some constant $\lambda_0$. It is known that up to a multiplicative constant, we have
\begin{equation}\int_{\mathit{\Theta}_0}\omega_{\varepsilon}=C_8-\lambda_\mathrm{ref},\end{equation}
where $\lambda_\mathrm{ref}$ is the second coordinate of the reference fiber $L_\mathrm{ref}$ of $\pi_A$, see $\cite{aak}$. So $w_{\alpha,0}=-T^\varepsilon$ forces $C_8=\lambda_1$ for some suitable constant $\lambda_1$. But choosing the fiber $L_\star$ so that $C_8\neq\lambda_1$ does not affect the genericity of the choice of $L_\star$.
\end{proof}

\section{Applications in four dimensions}\label{section:sing}
This section contains two simple applications of twin Lagrangian fibrations studied in the last section, which are inspired respectively by Section 4.4 of the paper of Smith $\cite{is}$ and the work of Lekili-Maydanskiy $\cite{lm}$. To get a more explicit picture, we restrict ourselves here to the case of symplectic 4-manifolds.

\subsection{Homological mirror symmetry for $\mathit{Bl}_K(\mathbb{C}^2)$}\label{section:fuk}
Let $V=\mathbb{C}$ in the setting of Section \ref{section:lag-blp}. Consider a partial compactification of our space $X^\vee$, which is simply a smoothing of the $A_{p-1}$ singularity
\begin{equation}X_{p-1}^\vee=\big\{(x,y,z)\in\mathbb{C}^3|yz=(x-r_1)\cdot\cdot\cdot(x-r_p)\big\}.\end{equation}
We equip $X_{p-1}^\vee$ with the restriction of the constant symplectic form on $\mathbb{C}^3$, making it into a Liouville manifold. The nearly tropical condition on $H\subset\mathbb{C}$ is now equivalent to
\begin{equation}|r_1|\ll\cdot\cdot\cdot\ll|r_p|.\end{equation}
By projecting to $x$, we get an exact Lefschetz fibration
\begin{equation}p_0:X_{p-1}^\vee\rightarrow\mathbb{C},\end{equation}
whose regular fibers are symplectomorphic to $T^\ast S^1$. On the other hand, by the discussions above, there is a twin Lagrangian fibration on the open dense subset $X^\vee\subset X_{p-1}^\vee$ formed by $\pi_A$ and $\pi_L$. Strictly speaking, here the symplectic structure on $X^\vee$ differs from the general case treated before, but the Lagrangian fibrations $\pi_A$ and $\pi_L$ still exist, and can actually be explicitly written down as
\begin{equation}\pi_A=\left(\log|x|,|y|^2/2-|z|^2/2\right),\pi_L=\left(\arg(x),|y|^2/2-|z|^2/2\right).\end{equation}
In particular, we see that both of the fibrations $\pi_A$ and $\pi_L$ are smooth. We shall always work in the ``generic case", namely when every singular fiber of $\pi_L$ contains a unique singularity, which requires an additional assumption on the positions of the $r_i$'s. After assuming this, any singular fiber $\pi_L^{-1}(\bullet)$ of $\pi_L$ is then a union of two Lagrangian discs $L^+$ and $L^-$, meeting transversely at the singularity of $\pi_L^{-1}(\bullet)$.\\
We make the following simple observation.
\begin{lemma}\label{lemma:thimdisc}
The Lagrangian discs $L^\pm$ in any singular fiber of $\pi_L$ are Lefschetz thimbles of $p_0:X_{p-1}^\vee\rightarrow\mathbb{C}$.
\end{lemma}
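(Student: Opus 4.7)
The plan is to realize each $\mathit{\Delta}_L^{\pm}$ as the symplectic parallel transport of the vanishing cycle of $p_0$ at $r_i$ along one of the two halves of the ray $\gamma_i = \{x \in \mathbb{C} : \arg x = \arg r_i\}$ emanating from the critical value $r_i$. The key structural input is the Hamiltonian $S^1$-action $(x,y,z) \mapsto (x, e^{i\theta}y, e^{-i\theta}z)$ on $X_{p-1}^\vee$, whose moment map is $\mu_0 = |y|^2/2 - |z|^2/2$. Since the action fixes $x$ and preserves the symplectic form restricted from $\mathbb{C}^3$, the symplectic horizontal distribution on the regular locus of $p_0$, defined as the $\omega$-orthogonal to the fibers, is $S^1$-invariant. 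Consequently parallel transport commutes with the action and, in particular, preserves the coisotropic level set $\mu_0^{-1}(0) = \{|y| = |z|\}$.

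Next I would identify the vanishing cycle of $p_0$ at the critical point $(r_i, 0, 0)$. A nearby smooth fiber is the affine conic $\{yz = c\}$ with $c = \prod_j(x - r_j)$, and for the induced symplectic form the $S^1$-orbit $\{|y| = |z| = \sqrt{|c|}\}$ is (up to fiberwise Hamiltonian isotopy) the unique Lagrangian circle that collapses to a point as $c \to 0$. Combined with the $S^1$-equivariance of parallel transport, this shows that for any embedded vanishing path $\gamma$ issuing from $r_i$ and avoiding the other critical values of $p_0$, the associated Lefschetz thimble takes the explicit $S^1$-invariant form
\begin{equation}
T_\gamma = \bigl\{(x, y, z) \in X_{p-1}^\vee : x \in \gamma,\ |y| = |z|\bigr\}.
\end{equation}

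Under the generic assumption that $\arg r_1, \ldots, \arg r_p$ are pairwise distinct, the two half-rays
\begin{equation}
\gamma^+ = \bigl\{t e^{i \arg r_i} : t \geq |r_i|\bigr\}, \qquad \gamma^- = \bigl\{t e^{i \arg r_i} : 0 < t \leq |r_i|\bigr\}
\end{equation}
are both embedded vanishing paths avoiding the remaining critical values, and comparing $T_{\gamma^\pm}$ with the explicit presentation of the singular fiber $\pi_L^{-1}(\arg r_i, 0) = \{\arg x = \arg r_i,\ |y| = |z|\}$ gives the desired identification $T_{\gamma^\pm} = \mathit{\Delta}_L^\pm$. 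The one substantive step is the claim that the symplectic horizontal distribution preserves $\mu_0^{-1}(0)$; this follows from the general principle that for a Hamiltonian fiber-preserving circle action the zero level of the moment map is preserved by the symplectic connection, and I do not anticipate it being a serious obstacle, the remaining verifications being essentially tautological consequences of the $S^1$-equivariance together with the explicit description of both objects as unions of $S^1$-orbits over the same path in the base.
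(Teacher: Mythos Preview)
Your proposal is correct and follows essentially the same approach as the paper: both identify the singular fiber $\pi_L^{-1}(\arg r_i,0)=\{\arg x=\arg r_i,\ |y|=|z|\}$ via the Hamiltonian $S^1$-action, observe that under $p_0$ it projects to the ray through $r_i$, and split it at the critical value into the two discs $\mathit{\Delta}_L^\pm$ lying over the half-rays $\gamma_i^\pm$. The paper's proof is very terse and essentially stops after noting this projection picture; your version supplies the missing symplectic justification, namely that the horizontal distribution of $p_0$ lies in $\ker d\mu_0$ (since the $S^1$-generator is vertical and the horizontal space is its $\omega$-orthogonal), so parallel transport preserves the level $\mu_0^{-1}(0)$ and the thimble over $\gamma_i^\pm$ is exactly $\{x\in\gamma_i^\pm,\ |y|=|z|\}=\mathit{\Delta}_L^\pm$. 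One small remark: in the body of your argument you phrase the preservation of $\mu_0^{-1}(0)$ as a consequence of $S^1$-equivariance of parallel transport, which by itself is not quite enough; the correct reason is the one you give in your closing sentence (verticality of the $S^1$-generator), so you may want to state it that way from the outset.
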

\begin{proof}
Under the map $p_0$, the fibers of $\pi_L$ are projected to rays on the $x$ coordinate plane. The singular fibers of $\pi_L$ correspond precisely to those rays passing through the critical values of $p_0$, namely $r_1,\cdot\cdot\cdot,r_p\in\mathbb{C}$. Over these points, the orbits of the Hamiltonian $S^1$-action $(\ref{eq:ciract})$ on $X^\vee$ degenerate to points. The Lagrangian discs $L^\pm$ are then projected by $p_0$ to two vanishing paths $\gamma_i^+$ and $\gamma_i^-$ which meet at a unique critical value $r_i$, which shows that they are thimbles meeting transversely at the critical point in the fiber $p_0^{-1}(r_i)$.
\end{proof}
Since $H^1(X_{p-1}^\vee)=0$, up to quasi-isomorphism there is a well-defined $\mathbb{Z}$-graded directed $A_\infty$ category $\mathcal{F}uk(p_0)$ associated to the Lefschetz fibration $p_0:X_{p-1}^\vee\rightarrow\mathbb{C}$, see $\cite{ps}$. The objects of $\mathcal{F}uk(p_0)$ are closed unobstructed Lagrangian submanifolds equipped with gradings and Spin structures as well as Lefschetz thimbles, and the morphisms between thimbles are defined using Hamiltonian diffeomorphisms with constant slopes near infinity. More explicitly, denote by $\mathit{\Delta}_1,\cdot\cdot\cdot,\mathit{\Delta}_p$ a basis of Lefschetz thimbles of $p_0$ associated to the vanishing paths $\gamma_1,\cdot\cdot\cdot,\gamma_p$ which are straight lines, and assume
\begin{equation}\arg(\gamma_1)<\cdot\cdot\cdot<\arg(\gamma_p),\end{equation}
then we have
\begin{equation}\mathit{HF}^\ast(\mathit{\Delta}_i,\mathit{\Delta}_j)=\left\{\begin{array}{ll}H^\ast(S^1,\mathbb{K}) & i<j,\\ \mathbb{K}e_{\mathit{\Delta}_i} & i=j,\\ 0 & i>j.\end{array}\right.\end{equation}
By Lemma $\ref{lemma:thimdisc}$, we can choose the basis of thimbles $\mathit{\Delta}_1,\cdot\cdot\cdot,\mathit{\Delta}_p$ so that they are Lagrangian discs contained in the singular fibers of $\pi_L$. For every $i\in\{1,\cdot\cdot\cdot,p\}$, we choose the copy of $L^\pm\subset\pi_L^{-1}(\bullet)$ so that its vanishing path $\gamma_i$ is an outward pointing ray starts at $r_i$, and still denote the resulting basis of Lefschetz thimbles by $\mathit{\Delta}_1,\cdot\cdot\cdot,\mathit{\Delta}_p$.

Define $\mathcal{F}uk(\pi_L)$ to be the Fukaya category consisting of $\mathit{\Delta}_1,\cdot\cdot\cdot,\mathit{\Delta}_p$ as its objects with the morphism spaces setting to be
\begin{equation}\mathit{CF}^\ast(\mathit{\Delta}_i,\mathit{\Delta}_j)=\left\{\begin{array}{ll}\mathbb{K} & i=j,\\ 0 & i\neq j,\end{array}\right.\end{equation}
which means for thimbles disjoint from each other, we don't use Hamiltonian perturbations to create intersections between them. It's easy to see with such a definition, we actually have an equivalence
\begin{equation}\mathcal{F}uk\big(\{p\textrm{ }\mathrm{pts}\}\big)\cong\mathcal{F}uk(\pi_L),\end{equation}
where $\mathcal{F}uk\big(\{p\textrm{ }\mathrm{pts}\}\big)$ is the Fukaya category of $p$ distinct points.\bigskip

Mirror to the Lagrangian fibration $\pi_L$ is the fibration $w_0:X\rightarrow\mathbb{K}^\ast$, which admits a unique singular fiber $w^{-1}(0)=D_{\overline{X}}$. For this fibration, Orlov defined the \textit{triangulated category of singularities} $\cite{do}$:
\begin{equation}\label{eq:dmf}D^b_\mathrm{sing}\big(w^{-1}(0)\big)=D^b\mathit{Coh}\big(w^{-1}(0)\big)/\mathrm{Perf}\big(w^{-1}(0)\big),\end{equation}
where $\mathrm{Perf}\big(w^{-1}(0)\big)$ denotes the full triangulated subcategory of perfect complexes in the derived category of coherent sheaves $D^b\mathit{Coh}\big(w^{-1}(0)\big)$, and the right hand side is a Verdier quotient. Although by definition the category $D^b_\mathrm{sing}\big(w^{-1}(0)\big)$ is naturally $\mathbb{Z}_2$-graded, in our situation we can lift it to a $\mathbb{Z}$-grading by specifying a $\mathbb{K}^\ast$-action on $X$ so that
\begin{itemize}
\item $w$ has weight 2,
\item $-1\in\mathbb{K}^\ast$ acts trivially.
\end{itemize}
On each $\mathbb{K}^2$ chart of the toric variety $\overline{X}$ with coordinates $x$ and $y$, this can be seen explicitly by letting $\mathbb{K}^\ast$ act with weight 0 on $x$ and weight 2 on $y$.\\
Denote by $D^\pi_\mathrm{sing}\big(w^{-1}(0)\big)$ the idempotent completion of $D^b_\mathrm{sing}\big(w^{-1}(0)\big)$, which is still triangulated by $\cite{bs}$.
\begin{proposition}\label{proposition:hmsfib}
There is an equivalence
\begin{equation}\Phi_\bullet:D\mathcal{F}uk(\pi_L)\cong D_\mathrm{sing}^\pi\big(w^{-1}(0)\big)\end{equation}
between $\mathbb{Z}$-graded triangulated categories, where $D\mathcal{F}uk(\pi_L)=H^0\big(\mathit{Tw}\mathcal{F}uk(\pi_L)\big)$ denotes the derived Fukaya category of $\mathcal{F}uk(\pi_L)$.
\end{proposition}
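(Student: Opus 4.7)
The plan is to realize the equivalence by sending each thimble $\mathit{\Delta}_i$ to a distinguished generator of the graded singularity category at the $i$-th node of $D_{\overline{X}} = w^{-1}(0)$.

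First I would identify $\overline{X}$ concretely. Using the SYZ construction from Appendix \ref{section:syz}, the mirror of $\overline{X}^\vee = \mathrm{Bl}_{\{r_1,\dots,r_p\}}(\mathbb{C}^2)$ is the toric Calabi--Yau surface whose fan has $p+1$ rays of the form $(k,1)$, $k = 0,1,\dots,p$, with positions dictated by the nearly tropical data. The toric boundary $D_{\overline{X}}$ is therefore a chain of $p+1$ projective lines meeting transversely at exactly $p$ nodes $q_1,\dots,q_p$, already in bijection with the thimbles $\mathit{\Delta}_1,\dots,\mathit{\Delta}_p$.

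Second, I would decompose the target along the nodes. Since $D_{\overline{X}}$ has only nodal singularities, and $D^b_{\mathrm{sing}}$ is supported on the singular locus with no derived morphisms between skyscrapers at distinct points, idempotent completion gives
\[
D^\pi_{\mathrm{sing}}\bigl(w^{-1}(0)\bigr) \;\cong\; \bigoplus_{i=1}^{p} D^\pi_{\mathrm{sing}}\bigl(\widehat{\mathcal{O}}_{D_{\overline{X}},q_i}\bigr).
\]
At each node $q_i$ I would choose local coordinates compatible with the $\mathbb{K}^*$-action, so that the local equation is $x_i y_i = 0$ with $x_i$ of weight zero and $y_i$ of weight two, matching the global $\mathbb{K}^*$-action on $\overline{X}$. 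By Orlov's theorem the $i$-th summand is then identified with the category of $\mathbb{Z}$-graded matrix factorizations $\mathrm{MF}^{\mathrm{gr}}(x_i y_i)$. A direct computation of the morphism complex of the matrix factorization $\bigl(k[x_i,y_i] \xrightleftharpoons[x_i]{y_i} k[x_i,y_i]\langle -2\rangle\bigr)$ shows that, in this asymmetric grading, the $\mathbb{Z}/2$-periodicity of the ungraded theory is broken: would-be higher Ext generators sit in strictly nonzero internal weight, so the degree-zero endomorphism algebra reduces to $\mathbb{K}$ and no other indecomposables survive idempotent completion.

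Finally, defining $\Phi_\bullet(\mathit{\Delta}_i)$ to be this local generator at $q_i$ gives a fully faithful $A_\infty$-functor: both sides have pairwise vanishing morphisms between distinct objects and endomorphism algebra $\mathbb{K}$ concentrated in degree zero, so the assignment is tautologically a quasi-isomorphism on hom-complexes. Extension to an equivalence of derived categories is then formal via the universal property of twisted complexes followed by idempotent completion. The main obstacle I anticipate is the degree bookkeeping in the third step: rigorously showing that the $\mathbb{Z}$-lift provided by the asymmetric $\mathbb{K}^*$-weights $(0,2)$ collapses the Clifford-type periodic endomorphism algebra of the $\mathbb{Z}/2$-graded singularity category to a single copy of $\mathbb{K}$ in internal degree zero, and verifying that idempotent completion produces no additional indecomposables per node, is the technical heart of the argument.
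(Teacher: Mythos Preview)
Your proposal is correct and follows essentially the same strategy as the paper: decompose the singularity category along the nodes of $D_{\overline{X}}$, identify each local piece with $D^b(\mathrm{pt})$, and conclude that the total endomorphism algebra on both sides is the semisimple ring $\mathbb{K}^p$, so the obvious assignment $\mathit{\Delta}_i \mapsto$ (generator at the $i$-th node) is an equivalence. The only difference is packaging: the paper cites Kn\"orrer periodicity for the local statement $D^\pi_{\mathrm{sing}}(\{xy=0\}) \cong D^b(\mathrm{pt})$ and Orlov's Proposition~1.14 for the decomposition into local pieces, whereas you propose to verify the local computation by hand in $\mathrm{MF}^{\mathrm{gr}}(x_iy_i)$ with the asymmetric weights $(0,2)$. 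The grading bookkeeping you flag as the ``technical heart'' is precisely what Kn\"orrer periodicity (in its $\mathbb{Z}$-graded form) encodes, so your direct computation and the paper's citation are two names for the same step. One minor inaccuracy: the outermost components of $D_{\overline{X}}$ are affine lines rather than $\mathbb{P}^1$'s, but this does not affect the argument since only the local structure at the nodes matters.
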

\begin{proof}
This is an application of the simplest example of Kn\"{o}rrer periodicity, which states that
\begin{equation}D^\pi_\mathrm{sing}\big(w^{-1}(0)\big)\cong D^b\mathit{Coh}(\mathit{pt})\end{equation}
when $p=1$, where $D^b\mathit{Coh}(\mathit{pt})$ is the derived category of coherent sheaves of one point. However, by Proposition 1.14 of $\cite{do}$, such an equivalence extends to the case of any $p\in\mathbb{Z}_{\geq0}$. This in particular shows that the total morphism algebra of $D^\pi_\mathrm{sing}\big(w^{-1}(0)\big)$ is isomorphic to the semi-simple ring $\mathbb{K}^p$, which proves the desired equivalence. The functor $\Phi_\bullet$ is defined in the obvious way. In particular, it sends $\mathit{\Delta}_i$ to an idempotent of the skyscraper sheaf $\mathcal{O}_{s_i}$ of the singularity $s_i$ of $w^{-1}(0)$ on object level.
\end{proof}
To get a deeper understanding of the above equivalence, we further (partially) compactify $X^\vee_{p-1}$ to $\overline{X}^\vee$, which is just $\mathit{Bl}_K(\mathbb{C}^2)$, where $K\subset\mathbb{C}^2$ is a finite set consisting of $p$ distinct points. For later purposes we further assume that these $p$ points lie on a sufficiently large circle $\widetilde{C}$ in the $x$-coordinate plane centered at the origin so that \begin{equation}\mathrm{dist}(r_i,r_j)>2\sqrt{\pi}\end{equation}
for any two different points $r_i,r_j\in K$. Note that assuming $K\subset\widetilde{C}$ will make $H=K\subset\mathbb{C}$ fail to be nearly tropical, which then results in a singular mirror by $\cite{aak}$. However, this can be avoided easily if one is willing to take more care about the positions of the $r_i$'s, we assume this here only to simplify our expositions. By blowing up with equal amounts at every point of $K$, one then obtains a (non-compact) monotone symplectic manifold $\mathit{Bl}_K(\mathbb{C}^2)$. Following Smith $\cite{is1}$, we consider the Lagrangian correspondences
\begin{equation}\mathbb{L}_1=\bigsqcup_{i=1}^p S_\mathrm{eq}^1\subset\{p\textrm{ }\mathrm{pts}\}\times\mathbb{P}^1\cong E,\mathbb{L}_2=\partial\nu_E\subset E^-\times\mathit{Bl}_K(\mathbb{C}^2),\end{equation}
where $E\subset\mathit{Bl}_K(\mathbb{C}^2)$ denotes the exceptional divisor, which is a disjoint union of $\mathbb{P}^1$'s, and $E^-$ is the symplectic manifold $(E,-\omega_E)$. $S_\mathrm{eq}^1\subset\mathbb{P}^1$ denotes the equator. $\nu_E$ is a tubular neighborhood of the exceptional divisor, and $\partial\nu_E$ denotes its boundary.
\begin{lemma}
Equip every $\mathbb{P}^1$ in $E$ with the symplectic form $2\omega_{\mathrm{FS}}$, where $\omega_{\mathrm{FS}}$ denotes the Fubini-Study form, and equip $\mathit{Bl}_K(\mathbb{C}^2)$ with the symplectic form $\Omega_1$ so that every exceptional curve has area $\pi$. Then the Lagrangian correspondence $\mathbb{L}_2\subset E^-\times\mathit{Bl}_K(\mathbb{C}^2)$ associated to $\partial\nu_E$ is monotone.
\end{lemma}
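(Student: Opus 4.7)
The plan is to reduce monotonicity of $\mathbb{L}_2$ to an ambient monotonicity check, exploiting the fact that each component of $\mathbb{L}_2$ is a 2-connected 3-manifold. First, I would identify the topology of $\mathbb{L}_2$ explicitly. Because $\mathrm{Bl}_K(\mathbb{C}^2)$ is the blow-up of a smooth surface at $p$ distinct points, each exceptional curve $E_i\subset E$ is a $\mathbb{P}^1$ with normal bundle $\mathcal{O}(-1)$, so the boundary of a small disc-bundle neighborhood $\nu_{E_i}$ is the unit circle bundle of $\mathcal{O}(-1)\to\mathbb{P}^1$, i.e.\ the standard $S^3$. Under the embedding $x\mapsto(\pi(x),x)$ defined by the fiber bundle projection $\pi:\partial\nu_E\to E$ (which realizes $\mathbb{L}_2$ as the Lagrangian correspondence induced by the symplectic reduction of the coisotropic $\partial\nu_E$), the correspondence $\mathbb{L}_2$ is diffeomorphic to $\partial\nu_E$, hence to a disjoint union of $p$ copies of $S^3$.

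Next, I would exploit the high connectivity of $S^3$. From the relative homotopy long exact sequence applied to each component
\begin{equation*}
\pi_2(S^3)\to\pi_2(M)\to\pi_2(M,S^3)\to\pi_1(S^3)
\end{equation*}
where $M=E^-\times\mathrm{Bl}_K(\mathbb{C}^2)$, and the vanishing $\pi_1(S^3)=\pi_2(S^3)=0$, we obtain an isomorphism $\pi_2(M,\mathbb{L}_2)\cong\pi_2(M)$. Under this identification, the symplectic area homomorphism and the Maslov index homomorphism on $\pi_2(M,\mathbb{L}_2)$ are just the restrictions of the ambient symplectic form and of twice the first Chern class $c_1(M)$, respectively, to $\pi_2(M)$. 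Consequently monotonicity of $\mathbb{L}_2$ is equivalent to monotonicity of the ambient symplectic manifold $(M,\omega_M)$ with $\omega_M=(-2\omega_{\mathrm{FS}})\oplus\Omega_1$.

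Finally, I would verify ambient monotonicity factor-by-factor. Since $\pi_2(M)=\pi_2(E^-)\oplus\pi_2(\mathrm{Bl}_K(\mathbb{C}^2))$, it suffices to show that both factors are monotone with a common monotonicity constant. For each $\mathbb{P}^1_i\subset E^-$, with the Fubini-Study normalization $\int_{\mathbb{P}^1}\omega_{\mathrm{FS}}=\pi$, one has
\begin{equation*}
\int_{\mathbb{P}^1_i}(-2\omega_{\mathrm{FS}})=-2\pi,\qquad c_1(E^-)\cdot[\mathbb{P}^1_i]=-c_1(T\mathbb{P}^1)\cdot[\mathbb{P}^1]=-2,
\end{equation*}
giving ratio $\pi$. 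For each exceptional curve $E_i\subset\mathrm{Bl}_K(\mathbb{C}^2)$, by the adjunction-type computation $c_1(T\mathrm{Bl}_K(\mathbb{C}^2))\cdot E_i=c_1(TE_i)+c_1(\mathcal{O}(-1))=2-1=1$, while $\Omega_1(E_i)=\pi$ by hypothesis, again giving ratio $\pi$. The two factors thus share the common monotonicity constant $\pi$, so $M$ is monotone and therefore $\mathbb{L}_2$ is too.

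The main point to get right, and the only real obstacle, is the topological identification of each component of $\mathbb{L}_2$ as $S^3$, which is what forces the relative $\pi_2$ to coincide with the absolute $\pi_2$ and eliminates any contribution from genuine disc classes. Once this is in place, the remainder is a bookkeeping exercise comparing the chosen normalizations of $2\omega_{\mathrm{FS}}$ and $\Omega_1$; the factor of $2$ in $2\omega_{\mathrm{FS}}$ is precisely what is needed so that the two ratios coincide, which is why the prescribed scaling in the statement is the correct one.
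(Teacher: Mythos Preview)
Your proof is correct and follows essentially the same approach as the paper: both arguments identify each component of $\mathbb{L}_2$ with $S^3$, use its simple connectivity to reduce monotonicity of the Lagrangian to spherical monotonicity of the ambient product $E^-\times\mathrm{Bl}_K(\mathbb{C}^2)$, and then verify the latter by computing $c_1$ and the symplectic areas on the generators $[\mathbb{P}^1_i]$ and $[E_i]$. Your version is somewhat more explicit about the long exact sequence and the numerical ratio, but the underlying strategy is identical.
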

\begin{proof}
In our case, $\partial\nu_E$ is a disjoint union of coisotropic $S^3$'s in $\mathit{Bl}_K(\mathbb{C}^2)$. The symplectic form $\Omega_1$ is well-defined on $\mathit{Bl}_K(\mathbb{C}^2)$ because by our assumption there are balls of radius strictly larger than $\sqrt{\pi}$ centered at the $p$ points in $K\subset\mathbb{C}^2$, and these balls are disjoint from each other. Since $\pi_1(\mathbb{L}_2)=0$ as a Lagrangian submanifold in $E^-\times\mathit{Bl}_K(\mathbb{C}^2)$, we only need to show the symplectic manifold $E^-\times\mathit{Bl}_K(\mathbb{C}^2)$ is \textit{spherically monotone}, namely the homomorphisms
\begin{equation}c_1,\omega:\pi_2\big(E^-\times\mathit{Bl}_K(\mathbb{C}^2)\big)\rightarrow\mathbb{R}\end{equation}
defined by the first Chern class and the symplectic form are positively proportional. In our case, $E^-\times\mathit{Bl}_K(\mathbb{C}^2)$ is is equipped with the product symplectic form $-2\omega_\mathrm{FS}\times\Omega_1$. On the other hand,
\begin{equation}c_1\big(E^-\times\mathit{Bl}_K(\mathbb{C}^2)\big)\in H^2(E^-)\oplus H^2\big(\mathit{Bl}_K(\mathbb{C}^2)\big)\end{equation}
is easily computed to be $(-2,\cdot\cdot\cdot,-2,1,\cdot\cdot\cdot,1)$, which shows that $\mathbb{L}_2$ is monotone.
\end{proof}
In what follows, we shall always equip $\mathit{Bl}_K(\mathbb{C}^2)$ with the monotone symplectic form $\Omega_1$. Since $\mathbb{L}_1$ is obviously monotone, the geometric composition $\mathbb{L}_1\circ\mathbb{L}_2$ defines a monotone Lagrangian submanifold in $\mathit{Bl}_K(\mathbb{C}^2)$, which is a disjoint union of $p$ monotone Lagrangian tori $\widetilde{T}_1,\cdot\cdot\cdot,\widetilde{T}_p$. These tori can also be seen using the Lefschetz fibration on $\mathit{Bl}_K(\mathbb{C}^2)$. Start with the trivial fibration $\mathbb{C}^2\rightarrow\mathbb{C}$ by projecting to one of the two coordinates, after blowing up at $K\subset\mathbb{C}^2$ we get a Lefschetz fibration
\begin{equation}\widetilde{p}_0:\mathit{Bl}_K(\mathbb{C}^2)\rightarrow\mathbb{C}\end{equation}
whose vanishing cycles are homotopically trivial.\\
Under the map $\widetilde{p}_0$, the tori $\widetilde{T}_1,\cdot\cdot\cdot,\widetilde{T}_p$ project to disjoint circles with the same radius $\widetilde{c}_1,\cdot\cdot\cdot,\widetilde{c}_p$ centered at the critical values of $\widetilde{p}_0$, namely $\widetilde{r}_1,\cdot\cdot\cdot,\widetilde{r}_p\in\widetilde{C}$. Denote by $D_i\subset\mathbb{C}$ the closed disc bounded by $\widetilde{c}_i$, and by $\widetilde{p}_0:V_i\rightarrow D_i$ the restriction of the Lefschetz fibration to $V_i=\widetilde{p}_0^{-1}(D_i)$. Note that one can arrange the ordering so that $r_i=\widetilde{r}_i$, so there is a 1-1 correspondence between the Lefschetz thimbles of $p_0$ and $\widetilde{p}_0$. We shall adopt this convention from now on.\\
Denote by $\widetilde{T}$ the monotone Lagrangian torus lying over the interior corner of the moment polytope of the toric variety $\mathcal{O}_{\mathbb{P}^1}(-1)$.
\begin{lemma}\label{lemma:modeldisc}
There is a diffeomorphism between moduli spaces
\begin{equation}\label{eq:modulid}\mathcal{M}_1^{\mathcal{O}(-1)}(\widetilde{T},\beta)\cong\mathcal{M}_1^{\mathit{Bl}_K(\mathbb{C}^2)}(\widetilde{T}_i,\beta_K)\end{equation}
as compact manifolds, where $\mathcal{M}_1^{\mathcal{O}(-1)}(\widetilde{T},\beta)$ denotes the moduli space of stable holomorphic discs with one boundary marked point represented by the the class $\beta\in\pi_2\big(\mathcal{O}_{\mathbb{P}^1}(-1),\widetilde{T}\big)$ with respect to the standard complex structure, and $\beta_K\in\pi_2\big(\mathit{Bl}_K(\mathbb{C}^2),\widetilde{T}_i\big)$ is the corresponding class of holomorphic discs bounded by $\widetilde{T}_i$.
\end{lemma}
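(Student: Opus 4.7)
The plan is to prove the identification of moduli spaces by realizing a neighborhood of each exceptional divisor $E_i\subset\mathrm{Bl}_K(\mathbb{C}^2)$ as a local model isomorphic to $\mathcal{O}_{\mathbb{P}^1}(-1)$, choosing compatible almost complex structures, and then showing that every holomorphic disc in the class $\beta_K$ is trapped inside this neighborhood. Since the symplectic form $\Omega_1$ was chosen so that the exceptional curves have area $\pi$ and the balls centered at the points of $K$ of radius $\sqrt{\pi}$ are pairwise disjoint, there is an open neighborhood $U_i$ of $E_i$, symplectomorphic via some $\Psi_i$ to a neighborhood $\tilde{U}$ of the zero section in $\mathcal{O}_{\mathbb{P}^1}(-1)$, which contains $\widetilde{T}_i$ and is disjoint from the other exceptional divisors $E_j$ ($j\neq i$). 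The construction of $\widetilde{T}_i$ as the geometric composition $\mathbb{L}_1\circ\mathbb{L}_2$ with $\mathbb{L}_2=\partial\nu_E$ guarantees that $\Psi_i(\widetilde{T}_i)$ coincides with the toric monotone fiber $\widetilde{T}\subset\mathcal{O}_{\mathbb{P}^1}(-1)$.

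Next I would pick an $\Omega_1$-compatible almost complex structure $J$ on $\mathrm{Bl}_K(\mathbb{C}^2)$ which, via $\Psi_i$, agrees with the standard integrable complex structure $J_0$ on $\tilde{U}$ for every $i$, and which is generic on the complement. With this choice, discs in $\mathcal{O}_{\mathbb{P}^1}(-1)$ bounded by $\widetilde{T}$ in the Maslov-2 class $\beta$ are classified explicitly by the toric description of Cho--Oh and lie inside $\tilde{U}$ after a harmless further shrinking; applying $\Psi_i^{-1}$ transports the moduli space $\mathcal{M}_1^{\mathcal{O}(-1)}(\widetilde{T},\beta)$ inside $\mathcal{M}_1^{\mathrm{Bl}_K(\mathbb{C}^2)}(\widetilde{T}_i,\beta_K)$ as a smooth submanifold of the same dimension.

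For the reverse inclusion I would argue that any $J$-holomorphic disc $u$ with $[u]=\beta_K$ is contained in $U_i$. The class $\beta_K$ has minimal symplectic area $\omega(\beta_K)=\omega(\beta)$ among Maslov-2 classes, and by positivity of intersection its topological pairing with the proper transform of a generic complex line through $r_i$ and with $E_j$ ($j\neq i$) forces $u$ to be disjoint from these divisors. A disc escaping $U_i$ would have to pass through the coisotropic collar $\partial\nu_{E_i}$; a standard monotonicity estimate applied in this collar then produces an a priori area contribution strictly larger than $\omega(\beta)$, contradicting the minimality. Hence $u(D)\subset U_i$, and $\Psi_i\circ u$ is a $J_0$-holomorphic disc bounded by $\widetilde{T}$ in class $\beta$. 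The two assignments are inverse to one another and respect the boundary marked point, giving the diffeomorphism of moduli spaces, with compactness inherited from Gromov compactness and the absence of disc bubbling in minimal Maslov-2 monotone classes.

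The main obstacle will be the confinement step: one has to show rigorously that no $J$-holomorphic representative of $\beta_K$ ventures outside $U_i$. Either a careful monotonicity estimate in an adapted collar of $\partial\nu_{E_i}$ (using that $\widetilde{T}_i$ is exact-isotopic to $\Psi_i^{-1}(\widetilde{T})$ within $U_i$ and that $\Omega_1$ is controlled on the collar) or a Bishop-type auxiliary holomorphic foliation of the collar transverse to $\widetilde{T}_i$ should be enough, but both require a careful choice of $J$ near the boundary of $U_i$ to keep the intersection-theoretic bookkeeping and the energy bounds mutually compatible.
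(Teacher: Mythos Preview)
Your approach differs from the paper's in the key confinement step, and the paper's route is considerably simpler. Instead of trying to trap discs via monotonicity estimates in a collar of $\partial\nu_{E_i}$, the paper keeps the \emph{integrable} complex structure on $\mathrm{Bl}_K(\mathbb{C}^2)$ and uses the Lefschetz fibration $\widetilde{p}_0:\mathrm{Bl}_K(\mathbb{C}^2)\to\mathbb{C}$, which is then genuinely holomorphic. Since $\widetilde{p}_0(\widetilde{T}_i)=\widetilde{c}_i$ is a circle bounding the disc $D_i$, for any holomorphic disc $u$ with boundary on $\widetilde{T}_i$ the composition $\widetilde{p}_0\circ u$ is a holomorphic map $\mathbb{D}\to\mathbb{C}$ with boundary in $\widetilde{c}_i$, and the maximum principle forces $\mathrm{im}(u)\subset V_i:=\widetilde{p}_0^{-1}(D_i)$. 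This $V_i$ is biholomorphic to a neighborhood of the zero section in $\mathcal{O}_{\mathbb{P}^1}(-1)$, and the same maximum principle argument applied to $\mathcal{O}_{\mathbb{P}^1}(-1)\to\mathbb{C}$ gives the reverse inclusion. Compactness (no disc or sphere bubbling) is then handled by observing that Maslov-$0$ discs would have to miss $Z_i=\widetilde{p}_0^{-1}(\widetilde{r}_i)\cup(\widetilde{V}\cap V_i)$, but $V_i\setminus Z_i$ is a torus bundle over an aspherical base, so $\pi_2(V_i\setminus Z_i,\widetilde{T}_i)=0$; and the only Chern-number-$1$ sphere in $V_i$ is the exceptional curve. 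Your strategy of perturbing to a generic $J$ away from $U_i$ would destroy exactly the holomorphicity of $\widetilde{p}_0$ that makes this work, leaving you with the delicate collar estimate you correctly flag as the main obstacle; that estimate may be salvageable but is not ``standard'' in the form you need, since monotonicity bounds area near a point rather than area of a crossing neck against the total energy of the class.
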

\begin{proof}
First consider the Lefschetz fibration $\widetilde{p}_0:\mathit{Bl}_K(\mathbb{C}^2)\rightarrow\mathbb{C}$, by applying maximum principle to the holomorphic function $\widetilde{p}_0\circ u:\mathbb{D}\rightarrow D_i$, we see that for every holomorphic disc $u:(\mathbb{D},\partial\mathbb{D})\rightarrow\big(\mathit{Bl}_K(\mathbb{C}^2),\widetilde{T}_i\big)$, $\mathrm{im}(u)\subset V_i$.

Since the Lagrangian torus $\widetilde{T}_i$ is monotone, only disc bubblings and sphere bubblings are possible. We then show that for every holomorphic disc $u:(\mathbb{D},\partial\mathbb{D})\rightarrow\big(V_i,\widetilde{T}_i\big)$, neither disc bubbling nor sphere bubbling can occur. For disc bubbles, first notice that for dimension reasons, $\widetilde{T}_i\subset V_i$ only bounds stable discs of Maslov index 2, in other words, their classes $\beta_K\in\pi_2\big(V_i,\widetilde{T}_i\big)$ must have intersection number 1 with
\begin{equation}Z_i:=\widetilde{p}_0^{-1}(\widetilde{r}_i)\cup(\widetilde{V}\cap V_i),\end{equation}
where $\widetilde{V}$ is the proper transform of $V\times\{0\}\cong\mathbb{C}$. Because of this, any disc bubble must have Maslov index 0, i.e. it's a holomorphic disc $u$ with $\mathrm{im}(u)\cap Z_i=\emptyset$. But $V_i\setminus Z_i$ can be realized as a Lagrangian torus bundle over some aspherical manifold with $\widetilde{T}_i$ as one of its fiber, which implies that $\pi_2(V_i\setminus Z_i,\widetilde{T}_i)=0$ and there is no disc bubbling. Sphere bubbles can be excluded simply by noticing that there is only one holomorphic sphere of Chern number 1 in $V_i$, which is an exceptional curve, and other holomorphic spheres in $\mathit{Bl}_K(\mathbb{C}^2)$ are disjoint from $V_i$. This proves that
\begin{equation}\mathcal{M}_1^{\mathit{Bl}_K(\mathbb{C}^2)}(\widetilde{T}_i,\beta_K)\cong\mathcal{M}_1^{V_i}(\widetilde{T}_i,\beta_K)\end{equation}
and their compactness.

Its easy to see the above arguments can also be applied to $\mathcal{O}_{\mathbb{P}^1}(-1)$, from which we get the desired diffeomorphism $(\ref{eq:modulid})$.
\end{proof}
The following result is proved in $\cite{is1}$ for $\widetilde{T}\subset\big(\mathcal{O}_{\mathbb{P}^1}(-1),\Omega_1\big)$.
\begin{corollary}\label{corollary:clif}
For any $1\leq i\leq p$, equip $\widetilde{T}_i$ with the Spin structure which is non-trivial on both $S^1$ factors and the $\mathbb{Z}_2$-grading coming from orientation, then
\begin{equation}\mathit{HF}^\ast(\widetilde{T}_i,\widetilde{T}_i)\cong\mathrm{Cl}_2,\end{equation}
as $\mathbb{Z}_2$-graded algebras, where $\mathrm{Cl}_2$ is the Clifford algebra associated to a non-degenerate quadratic form on $\mathbb{K}^2$.
\end{corollary}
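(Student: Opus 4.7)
The plan is to reduce the computation directly to Smith's result in $\cite{is1}$ for the Clifford torus $\widetilde{T}\subset\mathcal{O}_{\mathbb{P}^1}(-1)$, using Lemma $\ref{lemma:modeldisc}$ to identify the open Gromov--Witten invariants that feed into the Floer product. Since $\widetilde{T}_i\subset\mathrm{Bl}_K(\mathbb{C}^2)$ is a monotone Lagrangian torus of minimal Maslov number $2$, the full $\mathbb{Z}_2$-graded Floer algebra $\mathit{HF}^\ast(\widetilde{T}_i,\widetilde{T}_i)$ is controlled by the disc potential $W_i\colon H^1(\widetilde{T}_i,U_\mathbb{K})\to\mathbb{K}$ obtained by weighted counts of Maslov-$2$ stable holomorphic discs with one boundary marked point, paired with the chosen spin structure.

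The first step is to argue that $W_i$ coincides with the Clifford torus potential $W$ of $\widetilde{T}$. Lemma $\ref{lemma:modeldisc}$ provides a diffeomorphism
\begin{equation*}
\mathcal{M}_1^{\mathcal{O}(-1)}(\widetilde{T},\beta)\;\cong\;\mathcal{M}_1^{\mathrm{Bl}_K(\mathbb{C}^2)}(\widetilde{T}_i,\beta_K)
\end{equation*}
of compact moduli spaces, and in both models these spaces are cut out transversely by the integrable complex structure (the discs lying, up to reparametrization, in a local toric chart of the total space of $\mathcal{O}_{\mathbb{P}^1}(-1)$). The diffeomorphism is compatible with the evaluation at the boundary marked point and preserves symplectic areas, so under the natural identification $H^1(\widetilde{T}_i,U_\mathbb{K})\cong H^1(\widetilde{T},U_\mathbb{K})$ induced by the diffeomorphism of tori one obtains $W_i=W$ class-by-class.

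The second step is to invoke the general principle (Cho--Oh, Fukaya--Oh--Ohta--Ono, and Biran--Cornea in the monotone setting) that whenever $W$ has a non-degenerate critical point and one picks the local system $\xi_\star$ corresponding to that critical point, $\mathit{HF}^\ast(L,L;\xi_\star)$ is isomorphic as a $\mathbb{Z}_2$-graded unital algebra to the Clifford algebra of the Hessian quadratic form at that point. The choice of spin structure non-trivial on each $S^1$ factor twists the coordinates $(z_1,z_2)$ on $H^1(\widetilde{T}_i,U_\mathbb{K})$ by signs so that the critical point of $W$ at which the algebra is to be evaluated lies in the image of $U_\mathbb{K}\times U_\mathbb{K}$; as shown by Smith in $\cite{is1}$, the Hessian of $W$ at this point is a non-degenerate quadratic form on $\mathbb{K}^2$, giving $\mathit{HF}^\ast(\widetilde{T},\widetilde{T})\cong\mathrm{Cl}_2$. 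Combining with $W_i=W$ then yields $\mathit{HF}^\ast(\widetilde{T}_i,\widetilde{T}_i)\cong\mathrm{Cl}_2$.

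The main obstacle I anticipate is the bookkeeping of coherent orientations: the $A_\infty$ structure depends on sign conventions compatible with the spin structure, and one must check that the diffeomorphism of moduli spaces in Lemma $\ref{lemma:modeldisc}$ intertwines the orientations induced by the Spin structure non-trivial on both factors of $\widetilde{T}_i$ with those used in Smith's computation on $\widetilde{T}$. This should be essentially topological, since $\widetilde{T}_i$ and $\widetilde{T}$ both arise as boundaries of standard tubular neighborhoods of exceptional $(-1)$-curves and the tubular neighborhoods themselves are symplectomorphic, but verifying the sign match is the one place where the transfer of Smith's computation requires explicit care.
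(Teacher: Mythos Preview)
Your proposal is correct and follows essentially the same route as the paper: reduce to the local model $\widetilde{T}\subset\mathcal{O}_{\mathbb{P}^1}(-1)$ via Lemma~\ref{lemma:modeldisc}, identify the disc potential, and conclude $\mathrm{Cl}_2$ from non-degeneracy of the critical point. The paper is slightly more explicit---it writes down $W(\widetilde{T}_i)=z_1+z_2+T^{-1/2}z_1z_2$ by citing the disc enumeration in $\cite{da2,chc}$ and notes separately that the holomorphic \emph{strips} are also confined to the local model by the maximum principle---whereas you package this last step into the general Cho--Oh/FOOO/Biran--Cornea principle that the potential alone determines the Floer algebra in the monotone case; but the substance is the same, and your orientation caveat is a fair point that the paper does not spell out either.
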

\begin{proof}
Lemma $\ref{lemma:modeldisc}$ allows us to identify the enumeration of holomorphic discs bounded by $\widetilde{T}_i\subset\mathit{Bl}_K(\mathbb{C}^2)$ with that of $\widetilde{T}\subset\mathcal{O}_{\mathbb{P}^1}(-1)$. The latter one is studied in detail in $\cite{da2}$ and $\cite{chc}$, from which we get the toric fiber $\widetilde{T}$ bounds three families of holomorphic discs, one for each toric boundary divisor in $\mathcal{O}_{\mathbb{P}^1}(-1)$. By $(\ref{eq:pot}$) we see that the superpotentials for $\widetilde{T}_i$ are given by
\begin{equation}W(\widetilde{T}_i)=z_1+z_2+T^{-1/2} z_1z_2,\end{equation}
where $z_i\in\mathbb{K}^\ast$. Since by maximum principle, all the holomorphic strips bounded by $\widetilde{T}_i$ are local, the fact that the unique critical point of $W(\widetilde{T}_i)$ is non-degenerate shows that $\mathit{HF}^\ast(\widetilde{T}_i,\widetilde{T}_i)\cong\mathrm{Cl}_2$.
\end{proof}
\begin{proposition}\label{proposition:generation}
$\widetilde{T}_1,\cdot\cdot\cdot,\widetilde{T}_p$ split-generate the non-zero eigensummand $\mathcal{F}uk\big(\mathit{Bl}_K(\mathbb{C}^2)\big)_{\widetilde{\lambda}}$ of the monotone Fukaya category, where
\begin{equation}\widetilde{\lambda}e_{\widetilde{T}_i}=\mathfrak{m}_0(\widetilde{T}_i)=m_0(\widetilde{T}_i)[\widetilde{T}_i]\end{equation}
is defined in terms of enumeration of Maslov index 2 holomorphic discs, see Appendix \ref{section:syz}.
\end{proposition}
\begin{proof}
Since $\mathit{Bl}_K(\mathbb{C}^2)$ is a symplectic manifold conical at infinity, we can consider the open-closed string maps
\begin{equation}\mathit{OC}^0:\mathit{HF}^\ast(\widetilde{T}_i,\widetilde{T}_i)\rightarrow\mathit{QH}^\ast\big(\mathit{Bl}_K(\mathbb{C}^2)\big)\end{equation}
constructed in $\cite{rs}$. By Corollary \ref{corollary:clif}, $[pt]$ defines a cocycle in $\mathit{HF}^\ast(\widetilde{T}_i,\widetilde{T}_i)$. It follows that $\mathit{OC}^0([pt])\neq0$. On the other hand, by a version of the Cardy relation (see $\cite{ps5}$), the images of cocycles for different $\widetilde{T}_i$'s are orthogonal to each other with respect to the quantum intersection pairing on $\mathit{QH}^\ast\big(\mathit{Bl}_K(\mathbb{C}^2)\big)$. From explicit computations one can see that \begin{equation}\mathit{QH}^\ast\big(\mathit{Bl}_K(\mathbb{C}^2)\big)/\mathit{QH}^\ast\big(\mathit{Bl}_K(\mathbb{C}^2)\big)_0\cong\bigoplus_{i=1}^p\mathbb{K}\end{equation}
is semisimple, where $\mathit{QH}^\ast\big(\mathit{Bl}_K(\mathbb{C}^2)\big)_0$ is the nilpotent summand with respect to the quantum multiplication of $c_1\big(\mathit{Bl}_K(\mathbb{C}^2)\big)$.

Denote by $\mathcal{C}$ the full subcategory formed by $\widetilde{T}_1,\cdot\cdot\cdot,\widetilde{T}_p$, from the above we see that the open-closed map $\mathit{OC}$ restricted on $\mathcal{C}$ hits an invertible element of $\mathit{QH}^\ast\big(\mathit{Bl}_K(\mathbb{C}^2)\big)_{\widetilde{\lambda}}$. By the generation criterion obtained in $\cite{rs}$, the claim follows.
\end{proof}
A similar generation result holds for a more general class of non-compact monotone symplectic manifolds, see $\cite{yl}$.
\bigskip

For simplicity, we shall omit the subscript and simply write $\mathcal{F}uk\big(\mathit{Bl}_K(\mathbb{C}^2)\big)$. On the other hand, from $\cite{is1}$ we see that the Lagrangian tori $\widetilde{T}_i$ have non-trivial idempotents, which we denote by $e_i^\pm$, and $e_i^+\cong e_i^-[1]$ in the split-closure of the category of twisted complexes $\mathit{Tw}^\pi\mathcal{F}uk\big(\mathit{Bl}_K(\mathbb{C}^2)\big)$. Denote by $\widetilde{\mathit{\Delta}}_1,\cdot,\cdot\cdot,\widetilde{\mathit{\Delta}}_p$ the Lefschetz thimbles of $\widetilde{p}_0$ so that their vanishing paths $\widetilde{\gamma}_1,\cdot\cdot\cdot,\widetilde{\gamma}_p$ point outwards along the radical directions.
\begin{lemma}\label{lemma:idem}
$\widetilde{\mathit{\Delta}}_i$ is the unique Lefschetz thimble of $\widetilde{p}_0$ which has non-trivial intersections with the monotone Lagrangian torus $\widetilde{T}_i$, and
\begin{equation}\mathit{HF}^\ast(\widetilde{T}_i,\widetilde{\mathit{\Delta}}_i)\cong H^\ast(S^1,\mathbb{K}).\end{equation}
\end{lemma}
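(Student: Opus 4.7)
\medskip
\noindent\textbf{Proof proposal.} The plan is to reduce the entire statement to a local analysis near the $i$-th exceptional curve by projecting everything via $\widetilde{p}_0$, and then identify the Floer cohomology with $H^\ast(S^1,\mathbb{K})$ using the Morse-Bott model together with a maximum principle argument in the spirit of Lemma \ref{lemma:modeldisc}.

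First, I would record the projections. By construction of $\widetilde{T}_i$ as the geometric composition $\mathbb{L}_1\circ\mathbb{L}_2$, this torus sits inside a tubular neighborhood $\nu_{E,i}$ of the $i$-th exceptional curve, and projects under $\widetilde{p}_0$ to a small embedded circle $\widetilde{c}_i\subset\mathbb{C}$ centered at $\widetilde{r}_i$. On the other hand, the thimble $\widetilde{\mathit{\Delta}}_j$ projects to the radially outward vanishing path $\widetilde{\gamma}_j$ starting at $\widetilde{r}_j$. Because the $\widetilde{r}_j$ lie on the common circle $\widetilde{C}$ with pairwise distance $>2\sqrt{\pi}$, the closed discs $D_i$ bounded by the circles $\widetilde{c}_i$ are pairwise disjoint, and for $j\ne i$ a radially outward ray from $\widetilde{r}_j$ never enters $D_i$. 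Therefore $\widetilde{\mathit{\Delta}}_j\cap\widetilde{T}_i=\emptyset$ for $j\ne i$, and this separation persists after any small Hamiltonian perturbation with constant slope along the base radial coordinate at infinity, yielding $\mathit{HF}^\ast(\widetilde{T}_i,\widetilde{\mathit{\Delta}}_j)=0$ for $j\ne i$.

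For the case $j=i$, the first subtask is to check that $\widetilde{T}_i\cap\widetilde{\mathit{\Delta}}_i$ is clean and diffeomorphic to $S^1$. By Lemma \ref{lemma:modeldisc}, the geometry of $\widetilde{T}_i$ inside $V_i$ is faithfully modeled by the Clifford torus $\widetilde{T}\subset\mathcal{O}_{\mathbb{P}^1}(-1)$, and $\widetilde{\mathit{\Delta}}_i|V_i$ corresponds to a Lagrangian thimble for the induced Lefschetz model. The path $\widetilde{\gamma}_i$ meets $\widetilde{c}_i$ in a single point $q$, and over $q$ the fiber $\widetilde{p}_0^{-1}(q)\cap V_i$ is symplectomorphic to a neighborhood of the zero section in $T^\ast S^1$; there the vanishing cycle traced out by $\widetilde{\mathit{\Delta}}_i$ coincides with the $S^1$-fiber of the torus bundle $\widetilde{T}_i\to\widetilde{c}_i$ over $q$. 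This exhibits $\widetilde{T}_i\cap\widetilde{\mathit{\Delta}}_i$ as an embedded circle along which the two tangent spaces share only the $S^1$-orbit direction, so the intersection is clean.

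Finally, I would use the Morse-Bott model of Floer cohomology (cf.\ \cite{fooo1}) to identify the $E_2$-page of the associated spectral sequence with $H^\ast(S^1,\mathbb{K})$, and argue that all higher differentials vanish. By the maximum principle applied to $\widetilde{p}_0\circ u$, any Floer strip $u$ with boundary on $\widetilde{T}_i$ and $\widetilde{\mathit{\Delta}}_i$ is forced to lie in $V_i$, exactly as in Lemma \ref{lemma:modeldisc}; moreover, by the same dimension and monotonicity considerations used in Corollary \ref{corollary:clif}, no Maslov-$0$ disc or sphere bubbles can form. A direct index count against the class $\beta_K$ of minimal Maslov number then shows that the only potentially contributing rigid strips would have negative virtual dimension, or else cancel in pairs by the Hamiltonian $S^1$-symmetry which preserves both $\widetilde{T}_i$ and $\widetilde{\mathit{\Delta}}_i$. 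This forces $\mathit{HF}^\ast(\widetilde{T}_i,\widetilde{\mathit{\Delta}}_i)\cong H^\ast(S^1,\mathbb{K})$.

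The main obstacle I anticipate is the last step, namely the rigorous exclusion of higher Morse-Bott differentials. The cleanest route is probably to promote the $S^1$-action $(\ref{eq:ciract})$ to a symmetry of the Floer data; since $\widetilde{T}_i$ is a torus orbit and $\widetilde{\mathit{\Delta}}_i$ is a union of $S^1$-orbits, equivariant transversality yields free $S^1$-actions on all relevant strip moduli spaces and kills their contributions for dimensional reasons. If that fails for regularity reasons, the backup plan is to exploit the explicit local potential $W(\widetilde{T}_i)=z_1+z_2+T^{-1/2}z_1z_2$ from Corollary \ref{corollary:clif} together with the fact that $\widetilde{\mathit{\Delta}}_i$ represents the idempotent $e_i^+$ to be established in the sequel, reducing the computation to a symmetric matrix factorization calculation whose cohomology is manifestly $H^\ast(S^1,\mathbb{K})$.
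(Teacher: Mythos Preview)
Your argument for the first half (uniqueness of the intersecting thimble) matches the paper's: both reduce to the configuration of vanishing paths in the base of $\widetilde{p}_0$.

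For the second half you take a genuinely different route. The paper does not attempt a direct Morse--Bott computation; instead it invokes Corollary~\ref{corollary:clif}, which gives $\mathit{HF}^\ast(\widetilde{T}_i,\widetilde{T}_i)\cong\mathrm{Cl}_2$, and then cites Lemma~4.26 of Smith~\cite{is1}. The underlying mechanism there is module-theoretic: $\mathit{HF}^\ast(\widetilde{T}_i,\widetilde{\mathit{\Delta}}_i)$ is a module over the semisimple algebra $\mathrm{Cl}_2$, and since the Floer chain complex has rank~$2$ (from the clean $S^1$ intersection, or two transverse points after perturbation) the only possibilities are $0$ or the simple $\mathrm{Cl}_2$-module of dimension~$2$. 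This bypasses entirely the strip-counting and equivariant-transversality issues you anticipate.

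Your direct geometric approach is reasonable in spirit, but the obstacles you flag are genuine. The $S^1$-equivariant regularity step is delicate (orbits in the strip moduli space need not be free), and the phrase ``a direct index count against the class $\beta_K$'' does not by itself exclude rigid strips in a monotone setting with minimal Maslov number~$2$. Your backup plan is also shaky: the identification of $\widetilde{\mathit{\Delta}}_i$ with the idempotent $e_i^+$ is not available at this point in the paper's logic, and in fact the paper never makes quite that claim---it is $\mathit{\Delta}_i\in\mathscr{F}(\pi_L)$, not $\widetilde{\mathit{\Delta}}_i$, that the functor $\widehat{\Phi}(e^+)$ sends to $e_i^+$. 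The module-theoretic route via $\mathrm{Cl}_2$ is both shorter and more robust.
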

\begin{proof}
The first half of the lemma follows from the positions of vanishing paths associated to the thimbles $\widetilde{\mathit{\Delta}}_i$ we choose. Corollary $\ref{corollary:clif}$ and the same proof as in $\cite{is1}$, Lemma 4.26 yield the second half.
\end{proof}
By $\cite{mww}$, the composite monotone Lagrangian correspondence $\mathbb{L}_1\circ\mathbb{L}_2$ defines a functor
\begin{equation}\widehat{\Phi}_{\mathbb{L}_1\circ\mathbb{L}_2}:\mathcal{F}uk\big(\{p\textrm{ }\mathrm{pts}\}\big)\rightarrow\mathcal{F}uk\big(\mathit{Bl}_K(\mathbb{C}^2)\big)\end{equation}
This functor has an idempotent summand
\begin{equation}\widehat{\Phi}(e^+):\mathcal{F}uk\big(\{p\textrm{ }\mathrm{pts}\}\big)\rightarrow\mathit{Tw}^\pi\mathcal{F}uk\big(\mathit{Bl}_K(\mathbb{C}^2)\big),\end{equation}
see $\cite{is1}$. Identifying $\mathcal{F}uk\big(\{p\textrm{ }\mathrm{pts}\}\big)$ with $\mathcal{F}uk(\pi_L)$ as $\mathbb{Z}_2$-graded categories in the obvious way, we obtain a functor from $\mathcal{F}uk(\pi_L)$ to $\mathit{Tw}^\pi\mathcal{F}uk\big(\mathit{Bl}_K(\mathbb{C}^2)\big)$. By slight abuse of notation, this functor is still denoted $\widehat{\Phi}(e^+)$. Note that on the level of objects, $\widehat{\Phi}(e^+)$ sends $\mathit{\Delta}_i$ to $e_i^+$.\\
In view of Proposition $\ref{proposition:hmsfib}$, there is a similar phenomenon on the mirror side, namely a fully faithful embedding
\begin{equation}\Phi(e^+)^\vee:D_\mathrm{sing}^\pi\big(w^{-1}(0)\big)\hookrightarrow D^\pi(X,W)\end{equation}
defined by regarding the generators on the left hand side as idempotents in $D^b(X,W)$, where $D^\pi(X,W)$ is the triangulated category consisting $D_\mathrm{sing}^\pi\big(W(\widetilde{T}_i)^{-1}(-T^{1/2})\big)$ as its direct summands, or it can be regarded as a triangulated category of D-branes of type B in the sense of $\cite{do}$. Here $w:X\rightarrow\mathbb{K}$ and $W(\widetilde{T}_i)$ are defined on $(\mathbb{K}^\ast)^2$ or $\mathbb{K}^2$, both categories are equipped with their natural $\mathbb{Z}_2$-gradings. On the object level, $\Phi(e^+)^\vee$ sends an idempotent of $\mathcal{O}_{s_i}$ to an idempotent of $\mathcal{O}_{t_i}$, where $t_i\in W(\widetilde{T}_i)^{-1}(-T^{1/2})$ is the unique singularity.\bigskip

\textbf{Remark.} Note that the expressions of $W(\widetilde{T}_i)$ coincide with the discussions in Appendix \ref{section:converse}, see Lemmas \ref{lemma:3.7} and \ref{lemma:3.8}. By restricting $W=w_0+w_1+w_2$ defined on the toric Calabi-Yau surface $\overline{X}$ to each $\mathbb{K}^2$ coordinate chart, we get essentially the superpotential $W(\widetilde{T}_i)$ associated to the monotone Lagrangian torus $\widetilde{T}_i$. This justifies the notation $D^\pi(X,W)$ we use here.\bigskip

To summarize our discussions in this subsection, we need a slight variation of the localized mirror functor $\widehat{\Phi}_\mathrm{CHL}$ introduced in $\cite{chl}$ and $\cite{chl1}$, which will be made precise in Appendix \ref{section:localized}. This is an $A_\infty$ functor
\begin{equation}\label{eq:localmf}\widehat{\Phi}_\mathrm{CHL}:\mathcal{F}uk\big(\mathit{Bl}_K(\mathbb{C}^2)\big)\rightarrow\mathit{MF}(X,W)\end{equation}
where
\begin{equation}\mathit{MF}(X,W):=\bigsqcup_{i=1}^p\mathit{MF}\big(W(\widetilde{T}_i)\big)\end{equation}
is the disjoint union of the category of matrix factorizations $\mathit{MF}\big(W(\widetilde{T}_i)\big)$. On the other hand, it is proved by Orlov in $\cite{do}$ and $\cite{do3}$ that there is an equivalence
\begin{equation}\mathit{\Sigma}:H^0\big(\mathit{MF}(X,W)\big)\rightarrow D^b(X,W)\end{equation}
between $\mathbb{Z}_2$-graded triangulated categories. Denote by
\begin{equation}\label{eq:derived}\Phi_\mathrm{CHL}:D\mathcal{F}uk\big(\mathit{Bl}_K(\mathbb{C}^2)\big)\rightarrow D^b(X,W)\end{equation}
the composition of the induced functor of $\widehat{\Phi}_\mathrm{CHL}$ on derived categories with $\mathit{\Sigma}$. $\Phi_\mathrm{CHL}$ can then be extended to a functor on split-closures of both sides in $(\ref{eq:derived})$, which by abuse of notation we still denote by $\Phi_\mathrm{CHL}$.
\begin{theorem}\label{theorem:idemcomm}
The following diagram is commutative.
\begin{equation}\label{eq:idemcomm}
\xymatrix{
D\mathcal{F}uk(\pi_L) \ar[d]_{\Phi_\bullet} \ar[r]^-{\Phi(e^+)}
 &D^\pi\mathcal{F}uk\big(\mathit{Bl}_K(\mathbb{C}^2)\big)\ar[d]^{\Phi_\mathrm{CHL}}\\
D^\pi_\mathrm{sing}\big(w^{-1}(0)\big) \ar[r]^-{\Phi(e^+)^\vee}
 &D^\pi(X,W)}
\end{equation}
where $\Phi(e^+)$ is the functor induced by $\widehat{\Phi}(e^+)$ on split-closed derived categories.
\end{theorem}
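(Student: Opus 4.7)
The plan is to verify commutativity of the diagram one object at a time, and then observe that morphism-level commutativity is essentially automatic. Since by definition $\mathit{CF}^\ast(\mathit{\Delta}_i,\mathit{\Delta}_j)=0$ for $i\neq j$ in $\mathscr{F}(\pi_L)$, the derived category $D\mathscr{F}(\pi_L)$ has semisimple morphism algebra $\mathbb{K}^p$, so the content of the theorem is really to match the images of the generators $\mathit{\Delta}_1,\dots,\mathit{\Delta}_p$ under the two compositions.

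First I would compute $\Phi_\mathrm{CHL}\circ\Phi(e^+)$ on a generator $\mathit{\Delta}_i$. By construction, $\Phi(e^+)(\mathit{\Delta}_i)=e_i^+$, the idempotent summand of $\widetilde{T}_i$ in $\mathit{Tw}^\pi\mathscr{F}\big(\mathrm{Bl}_K(\mathbb{C}^2)\big)$ corresponding to the Clifford algebra structure of Corollary~\ref{corollary:clif}. The localized mirror functor reviewed in Appendix~\ref{section:localized} then sends $\widetilde{T}_i$ to a matrix factorization of $W(\widetilde{T}_i)=z_1+z_2+T^{-1/2}z_1z_2$; since this Laurent polynomial has a unique non-degenerate critical point $t_i$ with critical value $-T^{1/2}$, after applying Orlov's equivalence $\mathit{\Sigma}$ the image $\Phi_\mathrm{CHL}(\widetilde{T}_i)$ is isomorphic in $D^\pi(X,W)$ to the skyscraper sheaf $\mathcal{O}_{t_i}$, and $\Phi_\mathrm{CHL}(e_i^+)$ becomes one of its two idempotent summands.

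Next I would compute $\Phi(e^+)^\vee\circ\Phi_\bullet$ on $\mathit{\Delta}_i$. By Proposition~\ref{proposition:hmsfib}, $\Phi_\bullet(\mathit{\Delta}_i)$ is an idempotent of the skyscraper sheaf $\mathcal{O}_{s_i}$ at the nodal singularity $s_i$ of $w^{-1}(0)$, and $\Phi(e^+)^\vee$ then identifies this with an idempotent of $\mathcal{O}_{t_i}$. To match this with the previous composition, the essential geometric input is the remark preceding the theorem: the global superpotential $W=w_0+w_1+w_2$ on $\overline{X}$, when restricted to a suitable $\mathbb{K}^2$-chart centered at $s_i$, agrees (up to an additive constant) with $W(\widetilde{T}_i)$. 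Under this identification the critical point $t_i$ of $W(\widetilde{T}_i)$ lies on the singular fiber and coincides with $s_i$, so the two compositions land in the same summand $\mathcal{O}_{t_i}\cong\mathcal{O}_{s_i}$ of $D^\pi(X,W)$.

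The main obstacle is keeping track of the $\mathbb{Z}_2$-grading and the choice of idempotent: the Clifford algebra $\mathit{HF}^\ast(\widetilde{T}_i,\widetilde{T}_i)\cong\mathrm{Cl}_2$ has two idempotents $e_i^\pm$ differing by a degree shift, and the matrix factorization produced by $\widehat{\Phi}_\mathrm{CHL}$ at a Morse critical point correspondingly splits as $\mathcal{O}_{t_i}\oplus\mathcal{O}_{t_i}[1]$ in $D^\pi(X,W)$. One must verify that the idempotent $\Phi_\mathrm{CHL}(e_i^+)$ is precisely the one produced by $\Phi(e^+)^\vee\circ\Phi_\bullet$, i.e.\ that the sign/grading conventions in the definition of $e_i^+$ in $\mathit{Tw}^\pi\mathscr{F}\big(\mathrm{Bl}_K(\mathbb{C}^2)\big)$ are compatible with those used on the B-side. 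This reduces to a direct comparison of the explicit matrix factorization attached to $\widetilde{T}_i$ in Appendix~\ref{section:localized} with the canonical idempotent summand of $\mathcal{O}_{s_i}$ viewed as a module over the completion of $w$ at $s_i$; once this is carried out, extension to the split-closed derived categories is formal, and morphism-level commutativity follows from the semisimplicity observed in the first paragraph together with the fact that all four functors are $\mathbb{K}$-linear.
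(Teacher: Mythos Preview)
Your proposal is correct and follows the same overall logic as the paper: reduce to an object-level check using semisimplicity, then verify that the two routes send each $\mathit{\Delta}_i$ to the same idempotent of $\mathcal{O}_{t_i}$.

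The emphasis, however, is different. The paper devotes most of its proof to showing that \emph{all four} functors in the diagram are equivalences: $\Phi_\bullet$ by Proposition~\ref{proposition:hmsfib}; $\Phi(e^+)$ by combining the generation result for $\widetilde{T}_i$ by thimbles with Lemma~\ref{lemma:idem} to see that $\widetilde{T}_i$ is in fact generated by the single thimble $\widetilde{\mathit{\Delta}}_i$; $\Phi(e^+)^\vee$ by an analogous argument; and $\Phi_\mathrm{CHL}$ by an explicit computation of $\hom_{D^\pi_\mathrm{sing}}(\mathcal{O}_{t_i},\mathcal{O}_{t_i})$ in the local nodal model together with Theorem~1.3 of \cite{chl1}. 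Once all four are known to be equivalences between categories with total morphism algebra $\mathbb{K}^p$, object-level commutativity (which the paper declares ``straightforward from definitions'') finishes the proof. Your route bypasses the equivalence statements entirely by observing up front that the source $D\mathscr{F}(\pi_L)$ already has semisimple morphism algebra $\mathbb{K}^p$, so any two $\mathbb{K}$-linear functors out of it that agree on the generators $\mathit{\Delta}_i$ are naturally isomorphic; you then trace the object-level computation more carefully than the paper does. This is more economical if one only wants commutativity, while the paper's approach yields the extra information that the diagram consists of equivalences.

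Your discussion of the grading/idempotent ambiguity is a point the paper glosses over. In practice it is harmless: the functor $\Phi(e^+)^\vee$ is only specified in the paper up to the choice of which idempotent of $\mathcal{O}_{t_i}$ one lands on, so one simply fixes that choice to match $\Phi_\mathrm{CHL}(e_i^+)$. Your ``direct comparison'' is thus not so much a verification as a normalization, and with that understood your argument is complete.
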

\begin{proof}
First observe that all four functors in the diagram $(\ref{eq:idemcomm})$ are equivalences. This is proved for $\Phi_\bullet$ in Proposition $\ref{proposition:hmsfib}$. The functor $\Phi(e^+)$ is by definition fully faithful. Since the Lagrangian torus $\widetilde{T}_i\subset\mathit{Bl}_K(\mathbb{C}^2)$ is generated by the thimbles $\widetilde{\mathit{\Delta}}_1,\cdot\cdot\cdot,\widetilde{\mathit{\Delta}}_p$ by Proposition 5.8 of $\cite{is}$ and $\widetilde{T}_i$ only has non-empty intersections with the thimble $\widetilde{\mathit{\Delta}}_i$ by Lemma \ref{lemma:idem}, we see that $\widetilde{T}_i$ is in fact generated by $\widetilde{\mathit{\Delta}}_i$. This shows that $\Phi(e^+)$ is an equivalence. Similar reasonings show that $\Phi(e^+)^\vee$ is an equivalence. For any Lagrangian torus $\widetilde{T}_i$ in $\mathcal{F}uk\big(\mathit{Bl}_K(\mathbb{C}^2)\big)$, its Floer cohomology $\mathit{HF}^\ast(\widetilde{T}_i,\widetilde{T}_i)$ has been computed in Corollary $\ref{corollary:clif}$. To show that $\Phi_\mathrm{CHL}$ is an equivalence, we need to compute $\hom_{D^\pi_\mathrm{sing}}(\mathcal{O}_{t_i},\mathcal{O}_{t_i})$ in $D_\mathrm{sing}^\pi\big(W(\widetilde{T}_i)^{-1}(-T^{1/2})\big)$. Since $t_i$ is a nodal singularity, the computation can be done in the standard local model when $\mathcal{O}_{t_i}$ is the skyscraper sheaf at the origin in $\mathrm{Spec}\big(\mathbb{K}[x,y]/xy\big)$. The result is
\begin{equation}\hom_{D^\pi_\mathrm{sing}}(\mathcal{O}_{t_i},\mathcal{O}_{t_i})\cong\mathit{HF}^\ast(\widetilde{T}_i,\widetilde{T}_i)\end{equation}
as $\mathbb{Z}_2$-graded $\mathbb{K}$-vector spaces, which proves that $\Phi_\mathrm{CHL}$ is indeed an equivalence by Theorem 1.3 of $\cite{chl1}$.

By the above, it's enough to show that the diagram $(\ref{eq:idemcomm})$ commutes on the object level, since multiplying by $\mathbb{K}^\ast$ is isomorphic to an identity functor on the field $\mathbb{K}$ viewed as a linear category with a single object. But the commutativity of $(\ref{eq:idemcomm})$ on the level of objects is straightforward from definitions.
\end{proof}
The above result gives a geometric interpretation of the homological mirror symmetry equivalence $D^\pi\mathcal{F}uk\big(\mathit{Bl}_K(\mathbb{C}^2)\big)\cong D^\pi(X,W)$ as the mirror symmetry between the fibration structures $\pi_L$ and $w_0$ in the framework of twin Lagrangian fibrations.

\subsection{Non-displaceable Lagrangian tori in rational homology balls}\label{section:ga}
As a geometric application of the structure of twin Lagrangian fibrations, we consider here the rational homology balls $B_{p,q}$ studied in $\cite{lm}$. These are Stein surfaces defined as the quotients of the $A_{p-1}$ Milnor fiber $X_{p-1}^\vee$ by certain finite group actions. More precisely, we assume throughout this subsection that $r_j=e^{\frac{2j\pi}{p}i}\in\mathbb{C}$ are $p$-th roots of unity in the definition of $X^\vee$. Denote by $G_{p,q}$ the following free action of $\mathbb{Z}_p$ on $X_{p-1}^\vee$:
\begin{equation}\xi\cdot(x,y,z)=(\xi^qx,\xi y,\xi^{-1}z),\end{equation}
where $\xi\in\mathbb{Z}_p$ is any primitive $p$-th root of unity and $(p,q)=1$. Then the quotient $B_{p,q}=X_{p-1}^\vee/G_{p,q}$ is a rational homology ball for any $p\geq2$. It is proved in $\cite{lm}$ that there is no closed exact Lagrangian submanifolds in $B_{p,q}$ for any $p>2$ and $\mathit{SH}^\ast(B_{p,q})\neq0$, therefore these Stein surfaces provide non-trivial examples to test the non-vanishing criterion of symplectic cohomology due to Seidel and Smith for 4-dimensional Liouville manifolds, see Section 5 of $\cite{ps3}$.\\
In this subsection, we give a new proof of the non-vanishing result $\mathit{SH}^\ast(B_{p,q})\neq0$ from the point of view of twin Lagrangian fibrations. We begin with the following observation.
\begin{proposition}
After the quotient by $G_{p,q}$, the twin Lagrangian fibration on $X^\vee$ descends to a twin Lagrangian fibration on $B_{p,q}$ away from some divisor $D_{p,q}^\vee\subset B_{p,q}$.
\end{proposition}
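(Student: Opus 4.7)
The plan is to verify that each piece of the twin Lagrangian fibration structure on $X^\vee$ is compatible with the $G_{p,q}$-action and then push it down to the quotient. First I would recall from \cite{lm} that since $(p,q)=1$, the $\mathbb{Z}_p$-action on $X_{p-1}^\vee$ is free: the only simultaneous solution of $\xi^q x = x$, $\xi y = y$, $\xi^{-1} z = z$ for a nontrivial root of unity $\xi$ is the origin, which does not lie on $X_{p-1}^\vee$ because $yz = \prod_j(-r_j)\neq 0$ there. Next I would note that the symplectic form on $X_{p-1}^\vee$ is the restriction of $\frac{i}{2}(dx\wedge d\bar x + dy\wedge d\bar y + dz\wedge d\bar z)$, which is manifestly invariant under the unitary action $(x,y,z)\mapsto(\xi^q x,\xi y,\xi^{-1}z)$. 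Consequently $B_{p,q}$ inherits a smooth symplectic structure, and the divisor $D^\vee_{p,q}\subset B_{p,q}$ can be defined as the image of $X_{p-1}^\vee\cap\{x=0\}$, which is a smooth submanifold of $B_{p,q}$ (diffeomorphic to $\mathbb{C}^\ast$) because the $G_{p,q}$-action restricts freely to it.

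Second, I would examine how the action interacts with the explicit formulas
\begin{equation*}
\pi_A=\bigl(\log|x|,\tfrac12(|y|^2-|z|^2)\bigr),\quad \pi_L=\bigl(\arg(x),\tfrac12(|y|^2-|z|^2)\bigr).
\end{equation*}
Both $|x|$ and $|y|^2-|z|^2$ are $G_{p,q}$-invariant, so $\pi_A$ descends verbatim to a Lagrangian fibration $\pi_A^{p,q}\colon B_{p,q}\setminus D^\vee_{p,q}\to\mathbb{R}\times\mathbb{R}_{>-\varepsilon}$ with the same base. For $\pi_L$, the action sends $\arg(x)\mapsto\arg(x)+2\pi q/p$, so $\pi_L$ is equivariant with respect to the free $\mathbb{Z}_p$-action on $S^1\times\mathbb{R}_{>-\varepsilon}$ that rotates the circle by $2\pi q/p$. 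The quotient of the base is again diffeomorphic to $S^1\times\mathbb{R}_{>-\varepsilon}$, and $\pi_L$ descends to a piecewise smooth Lagrangian fibration $\pi_L^{p,q}$ on $B_{p,q}\setminus D^\vee_{p,q}$ over this new base. Since the $G_{p,q}$-action is free and symplectic, the Lagrangian property is preserved by the covering quotient.

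Third, to verify the twin property in the sense of Definition~\ref{definition:4.1} for $(\pi_A^{p,q},\pi_L^{p,q})$, I would argue that the clean intersection loci descend. Upstairs, for a generic pair of regular fibers $L,L_\star$ of $\pi_A,\pi_L$ with $L\cap L_\star\ne\emptyset$, the intersection is a Hamiltonian $S^1$-orbit of the action \eqref{eq:ciract}, in particular a $1$-dimensional affine submanifold of both. The $S^1$-action \eqref{eq:ciract} commutes with $G_{p,q}$ (as the reader can check on the explicit formula), so it descends, and the image of $L\cap L_\star$ in $B_{p,q}\setminus D^\vee_{p,q}$ is still such an orbit. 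Because the quotient map is a local symplectomorphism on the free locus, the descended intersection is again a $1$-dimensional affine submanifold of the descended fibers, which verifies the index-$1$ twin condition.

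The main subtlety I would highlight is not the verification itself, which is essentially local, but matching the bases correctly: one must keep track of the fact that $\pi_L$ does \emph{not} descend to a fibration over the original base $S^1\times\mathbb{R}_{>-\varepsilon}$, but over the $\mathbb{Z}_p$-quotient of it (this is the step where the coprimality $(p,q)=1$ is used so that the quotient base is smooth). Once this identification is recorded, everything else follows by transporting the structures through the free symplectic quotient.
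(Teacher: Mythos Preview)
Your proof is correct and follows essentially the same approach as the paper: you check that $\pi_A$ is $G_{p,q}$-invariant and hence descends directly, that $\pi_L$ is $G_{p,q}$-equivariant for the rotation action on the base and hence descends after quotienting the base, and that the clean $S^1$-orbit intersections descend because the Hamiltonian $S^1$-action commutes with $G_{p,q}$. Your identification of $D_{p,q}^\vee$ as the image of $\{x=0\}$ agrees with the paper's description (the paper writes ``the conic $\{yz=1\}$'', which in $X_{p-1}^\vee$ with $r_j$ the $p$-th roots of unity is the same locus up to what appears to be a sign typo, since $x=0$ forces $yz=-1$).
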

\begin{proof}
Under the action of $G_{p,q}$, the fibers of $\pi_A$ are invariant, which implies that it descends to a Lagrangian torus fibration $\pi_A^{p,q}$ on $B_{p,q}\setminus D_{p,q}^\vee$, where $D_{p,q}^\vee\subset B_{p,q}$ is the divisor which lifts to the conic $\{yz=1\}\subset X_{p-1}^\vee$. On the other hand, $G_{p,q}$ acts on the fibers of $\pi_L$ freely, which implies that the orbits of the fibers of $\pi_L$ under the action of $G_{p,q}$ form another Lagrangian $\mathbb{R}\times S^1$ fibration $\pi_L^{p,q}$ on $B_{p,q}\setminus D_{p,q}^\vee$. Since the action of $G_{p,q}$ preserves the moment map of the Hamiltonian $S^1$-action on $X^\vee$, the intersections between the fibers of $\pi_A^{p,q}$ and $\pi_L^{p,q}$ are clean and affine.
\end{proof}
The idea of our proof goes as follows. Pick a circle $c_p\subset\mathbb{C}$ in the base of the Lefschetz fibration $p_0$ on $X_{p-1}^\vee$ centered at the origin, so that the radius of $c_p$ is larger than 1. The vanishing cycles fibered over $c_p$ then defines a Lagrangian torus $T_p\subset X_{p-1}^\vee$, which is a fiber of $\pi_A$. From the point of view of family Floer cohomology, the non-displaceability of $T_p$ should be detected with the help of the additional Lagrangian fibration $\pi_L$. More precisely, the Floer cohomologies $\mathit{HF}^\ast\big(T_p,\pi_L^{-1}(b)\big)$ with $b\in B_\star^\vee$ should lead to a spectral sequence converging to $\mathit{HF}^\ast(T_p,T_p)$.\\
In our case, the situation is much simpler because Lemma $\ref{lemma:thimdisc}$ provides a set of distinguished Lagrangians coming from the singular fibers $\pi_L$, namely the Lefschetz thimbles $\mathit{\Delta}_1,\cdot\cdot\cdot,\mathit{\Delta}_p$ of $p_0$ which form a full exceptional collection in $D\mathcal{F}uk(p_0)$, so it suffices to look at finitely many Floer cohomology groups $\mathit{HF}^\ast(T_p,\mathit{\Delta}_i)$. In view of the proposition above, we then expect that the same story will descend to the quotient $B_{p,q}$. By a theorem of Seidel-Smith $\cite{ps3}$, this then implies the non-triviality of $\mathit{SH}^\ast(B_{p,q})$. In short, the non-vanishing of $\mathit{SH}^\ast(B_{p,q})$ should follow by studying the Lagrangian Floer theory between certain fibers of $\pi_A^{p,q}$ and $\pi_L^{p,q}$ which form a twin Lagrangian fibration on $B_{p,q}\setminus D_{p,q}^\vee$.
\begin{lemma}\label{lemma:torthim}
For every $1\leq i\leq p$, $\mathit{HF}^\ast(T_p,\mathit{\Delta}_i)\cong H^\ast(S^1,\mathbb{K})$.
\end{lemma}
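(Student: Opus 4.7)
The plan is to mimic the proof of Lemma~\ref{lemma:idem} (itself modelled on Lemma~4.26 of Smith~\cite{is1}) in the present Milnor fiber setting, using the Lefschetz fibration $p_0$ together with the twin Lagrangian fibration structure. First I would locate $T_p$ and $\mathit{\Delta}_i$ in the base of $p_0$. By construction $T_p$ is the fiber of $\pi_A$ with $|x|$ equal to the radius of $c_p$, so $p_0(T_p)=c_p$; by Lemma~\ref{lemma:thimdisc} the thimble $\mathit{\Delta}_i$ projects onto the outward vanishing path $\gamma_i$ starting at the critical value $r_i$. Since $|r_i|=1$ lies inside $c_p$, the ray $\gamma_i$ meets $c_p$ transversely at a unique point $\ast_i$.

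The next step is to understand the intersection $T_p\cap\mathit{\Delta}_i$. Over $\ast_i$ the fiber $p_0^{-1}(\ast_i)$ is symplectomorphic to $T^\ast S^1$, and both Lagrangians restrict to the vanishing cycle $V_i\cong S^1$ associated to $r_i$: for $T_p$ because it is obtained by parallel-transport along $c_p$ of a curve in the isotopy class of the vanishing cycle, and for $\mathit{\Delta}_i$ by the very definition of a Lefschetz thimble. After a small Hamiltonian perturbation supported near $\ast_i$, one arranges $T_p\cap\mathit{\Delta}_i$ to be a clean intersection diffeomorphic to $V_i\cong S^1$.

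I would then compute the Floer cohomology via Po\'zniak's theorem on clean intersections: a neighborhood of $V_i$ fits the standard Morse--Bott local model, producing a spectral sequence with $E_2\cong H^\ast(S^1,\mathbb{K})$ twisted by a local system, which the grading and orientation conventions compatible with those of Lemma~\ref{lemma:idem} make trivial. Degeneration at $E_2$ is automatic since the complex is concentrated in two consecutive degrees. Equivalently, after a further small perturbation replacing the clean locus by two transverse intersection points, any Floer strip between them must project under $p_0$ to a strip in $\mathbb{C}$ with boundary on $c_p\cup\gamma_i$; by the maximum principle such strips are localized near $\ast_i$, come in a rigid pair, and cancel in signs.

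The step I expect to be the main obstacle is controlling disc bubbling off $T_p$ so that $\mathit{CF}^\ast(T_p,\mathit{\Delta}_i)$ is well-defined. The thimble $\mathit{\Delta}_i$ is exact and so $\mathfrak{m}_0(\mathit{\Delta}_i)=0$, but $T_p$ bounds Maslov index $2$ discs, one family for each critical value of $p_0$ enclosed by $c_p$. To cancel these contributions one must equip $T_p$ with a unitary rank-one local system $\xi\in H^1(T_p,U_{\mathbb{K}})$ corresponding to a critical point of the disc potential; the right choice is dictated by Theorem~\ref{theorem:blp}, namely the $\xi$ whose mirror point lies in $\mathrm{supp}\,H^\ast\mathcal{F}(\mathit{\Delta}_i,\cdot)$. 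With this unobstructed pair in hand, the maximum principle applied to $p_0$ rules out disc bubbles attaching to Floer strips with boundary on $\mathit{\Delta}_i$, since $\mathit{\Delta}_i$ exits through $\gamma_i$ while disc bubbles off $T_p$ are confined to the closed region bounded by $c_p$. Once this is established, the previous steps yield the claimed isomorphism $\mathit{HF}^\ast(T_p,\mathit{\Delta}_i)\cong H^\ast(S^1,\mathbb{K})$.
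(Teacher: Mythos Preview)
Your approach is geometrically natural but differs substantially from the paper's, and the key step---vanishing of the Morse--Bott Floer differential on $C_\ast(T_p\cap\mathit{\Delta}_i)$---is not actually established by what you wrote. The clean locus is a single $S^1$, so after perturbing to two transverse points the Floer cochain complex sits in degrees $k$ and $k+1$; but the Floer differential \emph{is} a degree-$+1$ map, so concentration in two consecutive degrees does not by itself force it to vanish, and your appeal to degeneration at $E_2$ begs the question. Your alternative claim that non-local strips are ruled out by the maximum principle and that the remaining local pair ``cancel in signs'' is correct for the local model in $T^\ast S^1$, but you give no argument excluding index-one strips whose projection under $p_0$ covers a nontrivial bigon bounded by an arc of $c_p$ and the segment of $\gamma_i$ lying inside the disc; such strips are not a~priori forbidden and would contribute to the same differential.

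The paper sidesteps this analytic difficulty entirely. It observes that since the clean intersection is a single circle, $\mathit{HF}^\ast(T_p,\mathit{\Delta}_i)$ is either $H^\ast(S^1,\mathbb{K})$ or zero, and then rules out the latter by algebraic means: $T_p$ is generated by $\mathit{\Delta}_1,\dots,\mathit{\Delta}_p$ in $\mathscr{F}(p_0)$ (Proposition~5.8 of \cite{is1}), while $\mathit{HF}^\ast(T_p,T_p)\cong H^\ast(T^2,\mathbb{K})$ by the disc-counting of Corollary~9.1 of \cite{aak} (which the paper checks survives the assumption that the $r_i$ lie on a common circle). Together these force $\mathit{HF}^\ast(T_p,\mathit{\Delta}_i)\neq 0$ for \emph{some} $i$, and the remaining cases follow by applying Dehn twists $\tau_{V_k}$ along the Lagrangian matching spheres, which permute the thimbles while preserving $T_p$. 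If you want to salvage the direct route, you would need a genuine index or energy argument excluding non-local strips; otherwise the generation-plus-symmetry reasoning is what closes the gap.
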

\begin{proof}
The Lagrangian submanifolds $T_p$ and $\mathit{\Delta}_i$ meet cleanly along a circle. The easiest way to see the vanishing of the Morse-Bott-Floer differentials on $C_\ast(T_p\cap\mathit{\Delta}_i)$ is to consider the Fukaya-Seidel category $\mathcal{F}uk(p_0)$. It is proved in Proposition 5.8 of $\cite{is1}$ that $T_p$ is generated by the thimbles $\mathit{\Delta}_1,\cdot\cdot\cdot,\mathit{\Delta}_p$ over any field $\mathbb{K}$ with $\mathrm{char}(\mathbb{K})\neq2$. On the other hand, by Corollary 9.1 of $\cite{aak}$, equipping $T_p$ with an appropriate Spin structure we have
\begin{equation}\mathit{HF}^\ast(T_p,T_p)\cong H^\ast(T^2,\mathbb{K})\end{equation}
as an algebra. Note that although it is assumed in Corollary 9.1 of $\cite{aak}$ that $|r_i|$'s are not the same, the chart $U_\alpha^\vee$ containing $T_p$ is not affected. More precisely, the holomorphic discs bounded by $T_p\subset X_{p-1}^\vee$ are sections of the Lefschetz fibration $p_0:X_{p-1}^\vee\rightarrow\mathbb{C}$ over the disc bounded by $c_p$. This disc counting can be broken into standard local models using the gluing results of $\cite{ps6}$ (see also $\cite{ww2}$ where the orientation issue is considered), which shows that the superpotential $W(T_p)$ is unaffected as long as the $r_i$'s lie inside the disc bounded by $c_p$. This implies that $\mathit{HF}^\ast(T_p,\mathit{\Delta}_i)\neq0$ for some $i$. But such a non-vanishing then extends to all $i$ by applying Dehn twists $\tau_{V_k}$ along the Lagrangian matching spheres $V_k\simeq S^2$ associated to the basic paths, i.e. linear paths in $\mathbb{C}$ connecting two $p$-th roots of unity.
\end{proof}
We now take the $G_{p,q}$-action into consideration. Note that although the Lagrangian fibrations $\pi_A$ and $\pi_L$ are compatible with the $G_{p,q}$-action, the Lefschetz fibration $p_0$ is not. More precisely, since $T_p$ is invariant under the action of $G_{p,q}$, it descends to a Lagrangian torus $T_{p,q}\subset B_{p,q}$. On the other hand, $G_{p,q}$ acts by permuting the thimbles $\mathit{\Delta}_i$, so after the quotient, we get a Lagrangian disc $\mathit{\Delta}_{p,q}\subset B_{p,q}$, which lifts to the union of the thimbles $\bigsqcup_{i=1}^p\mathit{\Delta}_i$ in $X_{p-1}^\vee$. However, $\mathit{\Delta}_{p,q}$ is no longer a Lefschetz thimble. Because of this, we consider the equivariant Fukaya category $\mathcal{F}uk(\pi_L)^{G_{p,q}}$, where $\mathcal{F}uk(\pi_L)$ is the subcategory of $\mathcal{F}uk(p_0)$ introduced in Section $\ref{section:fuk}$, but here we also include the torus fibers of $\pi_A$ as its objects. By Lemma \ref{lemma:genthim}, this will not cause any confusion when passing to derived categories. $\mathcal{F}uk(\pi_L)^{G_{p,q}}$ will serve as a technical replacement of the Fukaya category of the weakly unobstructed closed Lagrangian submanifolds and the Lagrangian disc $\mathit{\Delta}_{p,q}$ in $B_{p,q}$.

The free action of a finite group on the Fukaya category of closed Lagrangians is studied systematically in $\cite{www}$, which is essentially enough for our purposes here.
\begin{lemma}
The $A_\infty$ category $\mathcal{F}uk(\pi_L)^{G_{p,q}}$ is well-defined, with its objects the $G_{p,q}$-Lagrangian branes $G_{p,q}L$ equipped with $\mathbb{Z}_2$-gradings and Spin structures induced from $L\in\mathrm{Ob}\big(\mathcal{F}uk(\pi_L)\big)$, and morphisms
\begin{equation}\mathit{CF}^\ast_{G_{p,q}}(G_{p,q}L_0,G_{p,q}L_1)=\mathit{CF}^\ast(G_{p,q}L_0,G_{p,q}L_1)^{G_{p,q}}=\bigoplus_{g_0,g_1}\mathit{CF}^\ast(g_0L_0,g_1L_1)^{G_{p,q}},\end{equation}
where $g_i\in G_{p,q}/G_{p,q,L_i}$, with $G_{p,q,L_i}$ denotes the isotropy subgroup at $L_i$.
\end{lemma}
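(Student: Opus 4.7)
The plan is to construct $\mathscr{F}(\pi_L)^{G_{p,q}}$ by specializing the equivariant Fukaya category machinery of Wu in $\cite{www}$ to the free $\mathbb{Z}_p$-action on $X_{p-1}^\vee$. The starting point is to check that $G_{p,q}$ acts on the set of objects of $\mathscr{F}(\pi_L)$ by symplectomorphisms: the torus fibers of $\pi_A$ are $G_{p,q}$-invariant because the action commutes with both the norm $|x|$ and the Hamiltonian $|y|^2/2-|z|^2/2$, while the Lefschetz thimbles $\mathit{\Delta}_1,\dots,\mathit{\Delta}_p$ are permuted cyclically since the automorphism $(x,y,z)\mapsto(\xi^q x,\xi y,\xi^{-1}z)$ permutes the critical values $r_j=e^{2\pi ij/p}$ via multiplication by $\xi^q$, using $\gcd(p,q)=1$. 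Thus each orbit $G_{p,q}L$ is a well-defined disjoint union of objects in $\mathscr{F}(\pi_L)$, and the decomposition over cosets $g_i\in G_{p,q}/G_{p,q,L_i}$ accounts for whether $L_i$ has non-trivial stabilizer.

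Next I would verify that gradings and Spin structures can be chosen compatibly with the group action. Since $G_{p,q}$ acts by holomorphic maps preserving the holomorphic volume form on $X_{p-1}^\vee$, the induced action on the bundle of Lagrangian Grassmannians preserves the Maslov phase, so $\mathbb{Z}_2$-gradings on a single representative $L$ push forward canonically to $gL$; similarly the Spin structures can be chosen $G_{p,q}$-equivariantly because the action is free. For the morphism spaces, I would then select $G_{p,q}$-invariant Floer data (Hamiltonian perturbations with constant slope at infinity for the thimbles, compactly supported perturbations for the torus fibers, and almost complex structures) by first choosing data on a fundamental domain and then averaging or spreading via the group. The full Floer complex naturally splits as $\bigoplus_{g_0,g_1}\mathit{CF}^\ast(g_0L_0,g_1L_1)$ and inherits a $G_{p,q}$-action; taking invariants produces the morphism space claimed in the statement.

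The $A_\infty$ structure maps $\mathfrak{m}_k$ are assembled from counts of pseudo-holomorphic polygons with boundary on the chosen Lagrangians, and since all Floer data are $G_{p,q}$-invariant, $G_{p,q}$ acts on the relevant moduli spaces by biholomorphisms, so the $\mathfrak{m}_k$ are equivariant. Restricting them to tensor products of invariant cochains yields operations on $\mathit{CF}^\ast_{G_{p,q}}$, and the $A_\infty$ relations descend automatically from those on $\mathscr{F}(\pi_L)$. Working over a coefficient ring with $\mathrm{char}(\mathbb{K})\nmid|G_{p,q}|=p$ or using the standard averaging idempotent would ensure that taking invariants is exact and that the resulting category has the expected homological content.

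The main obstacle is that $\mathscr{F}(\pi_L)$ mixes closed Lagrangians (the torus fibers of $\pi_A$) with the non-compact thimbles, whereas the formalism of $\cite{www}$ is developed for closed Lagrangian submanifolds acted on freely by a finite group. To overcome this I would appeal to the fact that the thimbles $\mathit{\Delta}_i$ are eventually conical in the Liouville manifold $X_{p-1}^\vee$ and that the $G_{p,q}$-action preserves the Liouville structure; Hamiltonian perturbations with constant slope near infinity, as used in the definition of $\mathscr{F}(p_0)$, can therefore be chosen equivariantly. This guarantees the requisite $C^0$-bounds and compactness of moduli spaces of discs with mixed compact/thimble boundary conditions, after which Wu's construction carries over verbatim to yield the equivariant $A_\infty$ category $\mathscr{F}(\pi_L)^{G_{p,q}}$.
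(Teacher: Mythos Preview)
Your proposal is correct and follows essentially the same approach as the paper: both specialize Wu's equivariant Fukaya category to the free $G_{p,q}$-action, observe that torus fibers are invariant while the thimbles are permuted, choose $G_{p,q}$-invariant Floer data so that the $A_\infty$ operations descend to invariants, and argue that the non-compactness of the thimbles requires only a mild perturbation at infinity which does not disturb Wu's transversality argument. The paper is terser, singling out the achievability of transversality within $G_{p,q}$-invariant data as the key point and verifying Assumption~4.1 of \cite{www} via the covering $\bigsqcup_i\mathit{\Delta}_i\rightarrow\mathit{\Delta}_{p,q}$, whereas you spell out more of the surrounding structure; one small inaccuracy is that the holomorphic volume form on $X_{p-1}^\vee$ is only preserved up to the root of unity $\xi^q$ rather than exactly, but since only $\mathbb{Z}_2$-gradings are needed this does not affect the conclusion.
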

\begin{proof}
Since the regular fibers of $\pi_A$ are all invariant under the $G_{p,q}$-action, the gradings, Spin structures and $A_\infty$ structures of these Lagrangians are well-defined. The key point is that we can choose $G_{p,q}$-invariant Floer data and transversality of the corresponding moduli problems can be achieved with the $G_{p,q}$-invariant Floer data. The $G_{p,q}$-invariance of the Floer datum then implies that the $A_\infty$ compositions
\begin{equation}\mu_{G_{p,q}}^d:\mathit{CF}_{G_{p,q}}^\ast(G_{p,q}L_{d-1},G_{p,q}L_d)\otimes\cdot\cdot\cdot\otimes \mathit{CF}_{G_{p,q}}^\ast(G_{p,q}L_0,G_{p,q}L_1)\rightarrow\mathit{CF}^\ast(G_{p,q}L_0,G_{p,q}L_d)\end{equation}
lie in the $G_{p,q}$-invariant part of $\mathit{CF}^\ast(G_{p,q}L_0,G_{p,q}L_d)$.

For the thimbles $\mathit{\Delta}_i$, we only need to use a small perturbation at infinity to make their self-morphisms well-defined, so adding these objects to the category will not affect the proof in $\cite{www}$ that transversality for the relevant moduli spaces can be achieved within $G_{p,q}$-invariant Floer data. Assumption 4.1 of $\cite{www}$ is clearly satisfied since every thimble $\mathit{\Delta}_i$ is a copy of the Lagrangian disc $\mathit{\Delta}_{p,q}$ in the finite cover $\bigsqcup_{i=1}^p\mathit{\Delta}_i\rightarrow\mathit{\Delta}_{p,q}$, so $G_{p,q}\mathit{\Delta}_i=\bigsqcup_{i=1}^p\mathit{\Delta}_i$ inherits its grading and Spin structure from $\mathit{\Delta}_i$.
\end{proof}
In order to use $\mathcal{F}uk(\pi_L)^{G_{p,q}}$ to compute the Floer cohomology of $T_{p,q}\subset B_{p,q}$, the following simple generation result is useful.
\begin{lemma}\label{lemma:genthim}
In the equivariant Fukaya category $\mathcal{F}uk(\pi_L)^{G_{p,q}}$, the $G_{p,q}$-invariant monotone Lagrangian torus $T_p$ is generated by $G_{p,q}\mathit{\Delta}_i$ for any $i=1,\cdot\cdot\cdot,p$.
\end{lemma}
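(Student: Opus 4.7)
The plan is to descend the non-equivariant generation, Proposition 5.8 of $\cite{is1}$ (already invoked in the proof of Lemma $\ref{lemma:torthim}$), to the equivariant setting by exploiting that $G_{p,q}$ acts freely and transitively on $\{\mathit{\Delta}_1,\ldots,\mathit{\Delta}_p\}$. Indeed, $G_{p,q}$ acts on the $x$-coordinate plane by $x\mapsto\xi^qx$, which cyclically permutes the critical values $r_j=e^{2\pi ij/p}$ of $p_0$ and hence the thimbles $\mathit{\Delta}_j$. Consequently, the orbit $G_{p,q}\mathit{\Delta}_i$ equals $\bigsqcup_{j=1}^p\mathit{\Delta}_j$, and as an object of $\mathit{Tw}\mathscr{F}(\pi_L)^{G_{p,q}}$ it represents the direct sum $\bigoplus_j\mathit{\Delta}_j$ endowed with the tautological cyclic equivariant structure.

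First, I would recall the non-equivariant statement in concrete form: in $\mathit{Tw}\mathscr{F}(p_0)$, the torus $T_p$ admits a presentation as an iterated mapping cone whose summands are the $\mathit{\Delta}_i$'s, with structure maps drawn from the Floer complexes $\mathit{CF}^\ast(\mathit{\Delta}_i,T_p)$, each of which is a chain model for $H^\ast(S^1,\mathbb{K})$ by Lemma $\ref{lemma:torthim}$. The natural source of such a twisted complex is the Yoneda module of $T_p$ restricted to the exceptional collection $\mathit{\Delta}_1,\ldots,\mathit{\Delta}_p$; once split-generation of $T_p$ by the thimbles is known, a specific bounding cochain realizing $T_p$ may be extracted.

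Second, I would upgrade this twisted complex to a $G_{p,q}$-equivariant one. Using the $G_{p,q}$-invariant Floer data of the preceding lemma, the action of $G_{p,q}$ on the total complex $\bigoplus_i\mathit{CF}^\ast(\mathit{\Delta}_i,T_p)$ permutes the summands while fixing $T_p$ pointwise, and the $A_\infty$-bimodule structure is strictly equivariant for this action. The structure map defining the twisted complex representation of $T_p$ is determined up to homotopy by the class of $T_p$ in its Yoneda module, so it may be chosen inside the $G_{p,q}$-invariant part of the ambient morphism spaces. Reinterpreted in $\mathit{Tw}\mathscr{F}(\pi_L)^{G_{p,q}}$, the resulting twisted complex is built from iterated copies of the single object $G_{p,q}\mathit{\Delta}_i$, which yields the claimed generation.

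The main obstacle is verifying that a single choice of $G_{p,q}$-invariant Floer datum achieves transversality for all the moduli spaces appearing in the Maurer-Cartan equation for the twisted complex, not merely for pairwise morphisms. Since $G_{p,q}$ acts freely and the isotropy of every Lagrangian in sight is trivial, Assumption 4.1 of $\cite{www}$ applies to our extended subcategory consisting of $T_p$ and the $\mathit{\Delta}_i$'s, producing the required equivariant $A_\infty$-structure without additional analytic input.
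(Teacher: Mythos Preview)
Your approach differs from the paper's in a way that leaves a genuine gap. You invoke Proposition~5.8 of \cite{is1}, which gives the \emph{iterated} twist relation $T_{\mathit{\Delta}_1}\cdots T_{\mathit{\Delta}_p}(T_p)\cong 0$ in $H^0\big(\mathscr{F}(p_0)\big)$, and then attempt to upgrade the resulting twisted-complex presentation of $T_p$ to a $G_{p,q}$-equivariant one. The difficulty is that the iterated twist depends on an ordering of the thimbles, which $G_{p,q}$ permutes cyclically; consequently the twisted complex you obtain is not $G_{p,q}$-invariant on the nose, and your sentence ``determined up to homotopy \ldots\ so it may be chosen inside the $G_{p,q}$-invariant part'' does not follow. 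Knowing a structure map up to homotopy does not by itself produce an invariant chain-level representative, and averaging over $G_{p,q}$ is problematic both because the $A_\infty$ structure is not linear in the Maurer--Cartan element and because $|G_{p,q}|=p$ may fail to be invertible in $\mathbb{K}$.

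The paper circumvents this by proving a stronger non-equivariant statement first: using the local model $\widetilde{T}\subset\mathcal{O}_{\mathbb{P}^1}(-1)$ together with Lemma~\ref{lemma:torthim}, it shows that the \emph{single} twist $T_{\mathit{\Delta}_i}(T_p)\cong 0$ already vanishes for every $i$ separately. From this one deduces that the \emph{generalized} twist $T_{\mathit{\Delta}_1,\ldots,\mathit{\Delta}_p}(T_p)\cong 0$ along the disjoint union $\bigsqcup_i\mathit{\Delta}_i$ vanishes. Since $\bigsqcup_i\mathit{\Delta}_i=G_{p,q}\mathit{\Delta}_i$ is itself $G_{p,q}$-invariant, the generalized twist is canonically an equivariant functor, and the relation descends directly to $T_{G_{p,q}\mathit{\Delta}_i}(G_{p,q}T_p)\cong 0$ in $H^0\big(\mathscr{F}(\pi_L)^{G_{p,q}}\big)$ without any need to choose invariant representatives by hand. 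The key step you are missing is precisely this passage through single-thimble generation via the local model.
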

\begin{proof}
By Proposition 5.8 of $\cite{is1}$, in $\mathcal{F}uk(p_0)$, the monotone Lagrangian torus $T_p\subset X^\vee_{p-1}$ is generated by the Lefschetz thimbles $\mathit{\Delta}_1,\cdot\cdot\cdot,\mathit{\Delta}_p$ in the sense that
\begin{equation}T_{\mathit{\Delta}_1}\cdot\cdot\cdot T_{\mathit{\Delta}_p}(T_p)\cong 0\end{equation}
in $H^0\big(\mathcal{F}uk(p_0)\big)$, where $T_{\mathit{\Delta}_i}$ is the abstract twist along $\mathit{\Delta}_i$. In fact, since the monotone toric fiber $\widetilde{T}\subset\mathcal{O}_{\mathbb{P}^1}(-1)$ is generated by the unique Lefschetz thimble $\widetilde{\mathit{\Delta}}$ of $\widetilde{p}_0:\mathcal{O}_{\mathbb{P}^1}(-1)\rightarrow\mathbb{C}$, by Lemma \ref{lemma:torthim} the same functorial relation carries over to the situation here and gives
\begin{equation}T_{\mathit{\Delta}_i}(T_p)\cong0,i=1,\cdot\cdot\cdot,p\end{equation}
in $H^0\big(\mathcal{F}uk(p_0)\big)$, which then implies
\begin{equation}T_{\mathit{\Delta}_1,\cdot\cdot\cdot,\mathit{\Delta}_p}T_p\cong0,\end{equation}
where $T_{\mathit{\Delta}_1,\cdot\cdot\cdot,\mathit{\Delta}_p}$ is the generalized twist operation defined in $\cite{ps}$, Remark 5.1. This clearly descends to the relation
\begin{equation}T_{G_{p,q}\mathit{\Delta}_i}G_{p,q}T_p\cong0\end{equation}
in $H^0\big(\mathcal{F}uk(\pi_L)^{G_{p,q}}\big)$, which in particular yields the desired generation result.
\end{proof}
Recall from $\cite{www}$ that there is an $A_\infty$ functor
\begin{equation}\mathcal{T}:\mathcal{F}uk(B_{p,q})\rightarrow\mathcal{F}uk(X_{p-1}^\vee)^{G_{p,q}},\end{equation}
which is fully faithful and sends $L\subset B_{p,q}$ to its lift in $X_{p-1}^\vee$ on the object level. Again we consider here only Lagrangian torus fibers of $\pi_A^{p,q}$ and $\pi_A$ as objects of $\mathcal{F}uk(B_{p,q})$ and $\mathcal{F}uk(X_{p-1}^\vee)$ respectively.
\begin{corollary}[Lekili-Maydanskiy $\cite{lm}$]
The Stein surfaces $B_{p,q}$ are non-empty, i.e. $\mathit{SH}^\ast(B_{p,q})\neq0$. Furthermore, $\mathit{HF}^\ast(T_{p,q},T_{p,q})\cong H^\ast(T^2,\mathbb{K})$.
\end{corollary}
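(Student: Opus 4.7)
The plan is to reduce the computation of $\mathit{HF}^\ast(T_{p,q},T_{p,q})$ in $B_{p,q}$ to an equivariant computation inside the Milnor fiber $X_{p-1}^\vee$, and then invoke the Seidel--Smith non-vanishing criterion for symplectic cohomology. The fully faithful functor $\mathcal{T}:\mathscr{F}(B_{p,q})\rightarrow\mathscr{F}(X_{p-1}^\vee)^{G_{p,q}}$ sends $T_{p,q}$ to its lift $G_{p,q}T_p=T_p$ (the torus $T_p$ is $G_{p,q}$-invariant because it is fibered by vanishing cycles over a $G_{p,q}$-invariant circle $c_p\subset\mathbb{C}$). Hence
\begin{equation}
\mathit{HF}^\ast(T_{p,q},T_{p,q})\;\cong\;\mathit{HF}^\ast_{G_{p,q}}(T_p,T_p),
\end{equation}
and it suffices to work inside the enlarged category $\mathscr{F}(\pi_L)^{G_{p,q}}$ introduced before Lemma \ref{lemma:genthim}.

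For the computation itself, I would exploit the generation statement of Lemma \ref{lemma:genthim}: in $\mathscr{F}(\pi_L)^{G_{p,q}}$ the object $T_p$ is quasi-isomorphic to an iterated cone built from the $G_{p,q}$-thimble $G_{p,q}\mathit{\Delta}_i$ (for any fixed $i$). Together with the Floer cohomology computation of Lemma \ref{lemma:torthim}, which gives $\mathit{HF}^\ast(T_p,\mathit{\Delta}_i)\cong H^\ast(S^1,\mathbb{K})$, and the observation that $G_{p,q}$ acts freely by permutation on $\{\mathit{\Delta}_1,\cdot\cdot\cdot,\mathit{\Delta}_p\}$ so that the invariant morphism space $\mathit{CF}^\ast_{G_{p,q}}(T_p,G_{p,q}\mathit{\Delta}_i)$ agrees with $\mathit{CF}^\ast(T_p,\mathit{\Delta}_i)$, one obtains $\mathit{HF}^\ast_{G_{p,q}}(T_p,G_{p,q}\mathit{\Delta}_i)\cong H^\ast(S^1,\mathbb{K})$. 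Substituting this into the iterated twist expression for $T_p$ and tracking the connecting differentials (which vanish because the differentials in the generation of $T_p$ by a single $\mathit{\Delta}_i$ come from the abstract twist $T_{\mathit{\Delta}_i}$, whose contribution is controlled by $\mathit{HF}^\ast(\mathit{\Delta}_i,T_p)$ paired with $\mathit{HF}^\ast(T_p,\mathit{\Delta}_i)$) yields the additive isomorphism $\mathit{HF}^\ast(T_{p,q},T_{p,q})\cong H^\ast(T^2,\mathbb{K})$.

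Once this non-vanishing is in hand, $T_{p,q}\subset B_{p,q}$ is a non-displaceable closed Lagrangian in a $4$-dimensional Liouville manifold, so the non-vanishing criterion of Seidel--Smith recalled in Section 5 of \cite{ps3} immediately gives $\mathit{SH}^\ast(B_{p,q})\neq0$. The point of the whole argument is that the twin Lagrangian fibration on $X^\vee$ and its descent to $B_{p,q}\setminus D_{p,q}^\vee$ provide natural test objects (the thimbles $\mathit{\Delta}_i$, which are components of singular fibers of $\pi_L$ by Lemma \ref{lemma:thimdisc}) against which to probe $T_p$.

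The main obstacle I anticipate is the passage from the non-equivariant generation and Floer computations to their $G_{p,q}$-equivariant counterparts: one needs to check that $G_{p,q}$-invariant Floer data achieving transversality can be chosen compatibly for both the closed torus $T_p$ and the non-compact thimbles $\mathit{\Delta}_i$ (so that the iterated twist/generation relation of Proposition 5.8 of \cite{is1} descends intact to $\mathscr{F}(\pi_L)^{G_{p,q}}$), and that the $G_{p,q}$-invariant part of the morphism complexes captures precisely a single copy of $H^\ast(S^1,\mathbb{K})$ rather than a larger or smaller summand. Both points are prepared by the hypotheses of the lemma preceding Lemma \ref{lemma:genthim}, but making them rigorous is where the technical content sits; everything after the equivariant computation is formal.
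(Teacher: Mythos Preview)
Your proposal is correct and follows essentially the same route as the paper: reduce to the equivariant Floer cohomology $\mathit{HF}^\ast_{G_{p,q}}(T_p,T_p)$ via the fully faithful transfer functor $\mathcal{T}$, then use Lemma~\ref{lemma:genthim} (generation of $T_p$ by the single equivariant thimble $G_{p,q}\mathit{\Delta}_i$) together with Lemma~\ref{lemma:torthim} to compute it. The only cosmetic difference is that where you ``track the connecting differentials'' in the twist triangle by hand, the paper packages the same step as the Beilinson spectral sequence for an exceptional collection with $m=1$, which degenerates at the $E_1$ page for trivial reasons; the $E_1$ term is then $\mathit{HF}^\ast_{G_{p,q}}(T_p,G_{p,q}\mathit{\Delta}_i)\otimes\mathit{HF}^\ast_{G_{p,q}}(G_{p,q}\mathit{\Delta}_i,T_p)\cong H^\ast(S^1,\mathbb{K})^{\otimes 2}$, exactly the computation you outline.
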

\begin{proof}
Recall that for an $A_\infty$ category $\mathscr{A}$ whose derived category $D(\mathscr{A})$ admits a full exceptional collection $Y_1,\cdot\cdot\cdot,Y_m$, the Beilinson spectral sequence with starting page
\begin{equation}E_1^{rs}=\big(\hom_{H(\mathscr{A})}(X_0,Y_{r+1}^!)\otimes\hom_{H(\mathscr{A})}(Y_{m-r},X_1)\big)^{r+s}\end{equation}
converges to $\hom_{H(\mathscr{A})}(X_0,X_1)$, where $Y_{r+1}^!$ denotes the Koszul dual of $Y_{m-r}$, see $\cite{fss,ps}$.\\
By Lemma \ref{lemma:genthim}, we can apply the above spectral sequence to the full subcategory $\mathscr{A}\subset\mathcal{F}uk(\pi_L)^{G_{p,q}}$ formed by the objects $G_{p,q}\mathit{\Delta}_i$ and $G_{p,q}T_p$. In our case, $m=1$, $Y_1=G_{p,q}\mathit{\Delta}_i$, and $X_0=X_1=G_{p,q}T_p$. It follows that the spectral sequence degenerates at $E_1$ page. This combines with Lemma \ref{lemma:torthim} shows that
\begin{equation}\mathit{HF}^\ast_{G_{p,q}}(T_p,T_p)\cong H^\ast(T^2,\mathbb{K}).\end{equation}
Now the transfer functor $\mathcal{T}$ gives us a fully faithful embedding $\mathcal{F}uk(B_{p,q})\hookrightarrow\mathcal{F}uk(\pi_L)^{G_{p,q}}$, which implies that
\begin{equation}\mathit{HF}^\ast(T_{p,q},T_{p,q})\cong\mathit{HF}^\ast_{G_{p,q}}(T_p,T_p).\end{equation}
\end{proof}

\appendix
\section{SYZ mirror constructions}\label{section:syz}

\subsection{SYZ mirror of toric Calabi-Yau manifolds}\label{section:syz-tcy}
This section is a summary of the work of Chan-Lau-Leung $\cite{cll}$ on SYZ mirror symmetry for toric Calabi-Yau manifolds, see also $\cite{da1}$ and Section 8 of $\cite{aak}$. Let $\overline{X}$ be an $n$-dimensional toric Calabi-Yau manifold. In this case, we use the Gross fibration $\pi_G:X\rightarrow B$ as the SYZ fibration. Denote by $\Delta\subset B$ the discriminant locus of $\pi_G$, then $\Delta\subset\mathcal{W}$ lies entirely in the unique wall $\mathcal{W}\subset B$ defined by $\{0\}\times\mathbb{R}^{n-1}$. $B\setminus\mathcal{W}$ consists of two chambers $B_1$ and $B_2$.
\begin{lemma}[$\cite{aak}$, Lemma 8.1]\label{lemma:obstruction}
A regular fiber $L_b\subset X$ of $\pi_G$ bounds some non-constant stable discs of Maslov 0 if and only of $L_b=\pi^{-1}_G(b)$ with $b\in \mathcal{W}\setminus\Delta$.
\end{lemma}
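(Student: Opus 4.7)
The key observation is that $D=\{w+1=0\}$ is the zero locus of the global holomorphic function $w+1$ on $\overline{X}$, and for any stable disc $u$ bounded by $L_b$ the Maslov index equals $2[D]\cdot u_{\mathrm{tot}}$. The condition $\mu(u_{\mathrm{tot}})=0$ together with positivity of intersection of holomorphic components with the holomorphic divisor $D$ forces every disc component of $u$ to be disjoint from $D$, since any disc component has boundary on $L_b\subset X=\overline{X}\setminus D$ and so cannot lie entirely in $D$.

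For the forward direction, I would restrict $w+1$ to each disc component $u_k:\mathbb{D}\to\overline{X}$. The resulting holomorphic map $\mathbb{D}\to\mathbb{C}^\ast$ has boundary on the circle of radius $C_1:=e^{b_1}$, where $b_1$ is the first coordinate of $b$. The maximum modulus principle gives $|(w+1)\circ u_k|\leq C_1$, and the minimum modulus principle applied to $1/((w+1)\circ u_k)$ then forces $(w+1)\circ u_k$ to be constant; for each sphere component the same conclusion is automatic because holomorphic functions on $\mathbb{P}^1$ are constant. By connectedness of the stable map there is a single $c\in\mathbb{C}^\ast$ with $|c|=C_1$ such that $w+1\equiv c$ on the whole image of $u$.

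The Calabi-Yau condition $(\nu,v_\alpha)=1$ implies that $w$ vanishes identically on every toric divisor, so the toric boundary of $\overline{X}$ coincides with $\{w=0\}$. Consequently, when $c\neq 1$ the level set $\{w=c-1\}$ is contained in the open torus $T_\mathbb{C}(N)$ and is a translate of $T_\mathbb{C}(N_\nu)\cong(\mathbb{C}^\ast)^{n-1}$. Inside $(\mathbb{C}^\ast)^{n-1}$ every stable holomorphic curve with boundary on a real torus fiber is constant (coordinate-wise max/min modulus for discs, Liouville for spheres), so $u$ non-constant forces $c=1$, i.e.\ $C_1=1$ and $b\in\mathcal{W}$. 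Conversely, if $b\in\mathcal{W}\setminus\Delta$ then $L_b\cap\{w=0\}$ is non-empty and decomposes into $T_\mathbb{R}(N_\nu)$-orbits, each contained in some toric divisor $D_\alpha$ as a Lagrangian torus fiber of the induced moment map on $D_\alpha$. Such a fiber bounds families of Maslov $2$ holomorphic discs inside $D_\alpha$, which become Maslov $0$ discs in $\overline{X}$ because $D_\alpha\cap D=\emptyset$ (again by the Calabi-Yau condition); these provide the desired non-constant Maslov $0$ stable discs bounded by $L_b$.

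The principal obstacle is verifying in the forward direction that the smooth hypersurface $\{w=c-1\}\cap\overline{X}$ really is just $(\mathbb{C}^\ast)^{n-1}$ when $c\neq 1$. This is exactly where the Calabi-Yau hypothesis $(\nu,v_\alpha)=1$ plays an essential role, forcing the toric boundary to coincide with $\{w=0\}$ so that the level cannot extend into any toric stratum where non-constant discs could potentially live. A minor additional care must be taken for sphere components possibly contained in $D$, but such spheres cannot attach to any disc component (which lies in $X$), so they do not arise in the stable configurations of interest.
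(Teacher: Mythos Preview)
The paper does not actually give a proof of this lemma; it is stated as background in Appendix~A.1 and attributed implicitly to the literature (Auroux, Chan--Lau--Leung). Your argument is correct and is essentially the standard one: the key inputs are Auroux's formula $\mu(u)=2\,[D]\cdot u$ for special Lagrangian fibers, the maximum/minimum modulus principle applied to $(w+1)\circ u$ to confine any Maslov~$0$ disc to a single level $\{w+1=c\}$, and the Calabi--Yau identity $(\nu,v_\alpha)=1$ which forces $\{w=0\}$ to coincide with the toric boundary, so that the level $\{w+1=c\}$ for $c\neq 1$ sits inside $(\mathbb{C}^\ast)^{n-1}$ where no nonconstant discs or spheres exist.

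Two small clean-ups you could make. First, your final remark about sphere components possibly lying in $D$ is moot for a better reason than the one you give: since $D=\{w+1=0\}$ is a principal divisor, $[D]=0$ in $H^2(\overline{X})$, so any holomorphic sphere has algebraic intersection $0$ with $D$; combined with positivity of intersection this already forces every sphere component to be disjoint from $D$ (and in fact $D$ lies in the affine open torus, which contains no compact curves at all). Second, in the converse direction it is worth remarking that $b\notin\Delta$ is exactly what guarantees $L_b\cap\{w=0\}$ lands in the open stratum of a single toric divisor $D_\alpha$, so that the Cho--Oh basic discs in $D_\alpha$ are available; this is implicit in what you wrote but could be stated explicitly.
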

By the above lemma, the regular fibers over $B\setminus\mathcal{W}$ are tautologically unobstructed; while the other regular fibers are potentially obstructed, namely they bound holomorphic discs of Maslov index 0.\bigskip

Fix a reference fiber $L_\mathrm{ref}$ of $\pi_G$, and choose a basis $\theta_1,\cdot\cdot\cdot,\theta_{n-1},\theta_0^{(1)}$ of $H_1(L_\mathrm{ref},\mathbb{Z})$, with $-\theta_1,\cdot\cdot\cdot,-\theta_{n-1}$ corresponding to the factors of the Hamiltonian $T_\mathbb{R}(N_\nu)$-action defined in Section $\ref{section:gghl}$ and $-\theta_0^{(1)}$ corresponding to the last $S^1$ factor of $T_\mathbb{R}(N)$. There is an exact Lagrangian isotopy between $L_\mathrm{ref}$ and a toric fiber $\mu_{\overline{X}}^{-1}(\bullet)$.\\
Let $U_1\subset X$ be the torus bundle over $B_1$ which is the inverse image $\pi_G^{-1}(B_1)$, then its rigid analytic $T$-dual $U_1^\vee$ can be regarded as the moduli space of $(L_b,\xi_b)$, with $L_b$ a torus fiber of $\pi_G$ and $\xi_b$ a unitary rank 1 local system. Under the isotopy from $L_\mathrm{ref}$ to $L_b$, every loop $\theta_i$ traces out a cylinder $\mathit{\Theta}_i$ with boundary lying inside $L_\mathrm{ref}\cap L_b$, and the loop $\theta_0^{(1)}$ traces out the cylinder $\mathit{\Theta}_0^{(1)}$. Identifying $H_1(L_b,\mathbb{Z})$ with $H_1(L_\mathrm{ref},\mathbb{Z})$, $\xi_b$ is determined by its holonomies along $\theta_1,\cdot\cdot\cdot,\theta_{n-1},\theta_0^{(1)}$; while $L_b$ is specified by the symplectic areas of $\mathit{\Theta}_1,\cdot\cdot\cdot,\mathit{\Theta}_{n-1},\mathit{\Theta}_0^{(1)}$. Up to a multiplicative constant, this provides a set of coordinates on $U_1^\vee\subset(\mathbb{K}^\ast)^n$:
\begin{equation}\label{eq:mircoo}
(x_1,\cdot\cdot\cdot,x_{n-1},z)=\left(T^{\int_{\mathit{\Theta}_1}\omega_{\overline{X}}}\xi_b(\theta_1),\cdot\cdot\cdot,T^{\int_{\mathit{\Theta}_{n-1}}\omega_{\overline{X}}}\xi_b(\theta_{n-1}),T^{\int_{\mathit{\Theta}_0^{(1)}}\omega_{\overline{X}}}\xi_b\left(\theta_0^{(1)}\right)\right),
\end{equation}
where $T$ is the Novikov parameter corresponding to the symplectic form $\omega_{\overline{X}}$.
For every Lagrangian brane $(L_b,\xi_b)$ with $L_b\subset U_1$, its obstruction is given by (Section 2.2 of $\cite{aak}$)
\begin{equation}\label{eq:pot}\mathfrak{m}_0(L_b,\xi_b)=\sum_{\beta\in\pi_2(X,L_b)\setminus0}T^{\int_\beta\omega_{\overline{X}}}\xi_b(\partial\beta)\mathrm{ev}_\ast\big[\mathcal{M}_1(L_b,\beta)\big],\end{equation}
where $\mathcal{M}_1(L_b,\beta)$ is the moduli space of holomorphic discs with one boundary marked point representing the relative homotopy class $\beta$ and $\mathrm{ev}:\mathcal{M}_1(L_b,\beta)\rightarrow L_b$ is the evaluation map. The pair $(L_b,\xi_b)$ defines a \textit{weakly unobstructed} object in the extended Fukaya category $\widetilde{\mathcal{F}uk}(\overline{X})$ (see Appendix \ref{section:localized}), i.e. it satisfies the weak Maurer-Cartan equation
\begin{equation}\mathfrak{m}_0(L_b,\xi_b)=W^\vee(L_b,\xi_b)e_{L_b},\end{equation}
where $e_{L_b}$ is the unit of $H^0(L_b,\mathbb{K})$, and $W^\vee(L_b,\xi_b)$ is a regular function on $U_1^\vee$ defined by
\begin{equation}\label{eq:weicon}W^\vee(L_b,\xi_b)=\sum_{\beta\in\pi_2(\overline{X},L_b),\beta\cdot D=1}n(L_b,\beta)T^{\int_\beta\omega_{\overline{X}}}\xi_b(\partial\beta).\end{equation}

Recall from Section \ref{section:gghl} that we will use $A$ to denote the set of toric divisors in $\overline{X}$. Using the Lagrangian isotopy between $\mu_{\overline{X}}^{-1}(\bullet)$ and $L_b$, together with the result of $\cite{co}$ we have:
\begin{lemma}[$\cite{cll}$, Proposition 4.30]
For $L_b\subset U_1$ a regular fiber of $\pi_G$, regarded as a Lagrangian submanifold of $\overline{X}$, the algebraic counts of Maslov index 2 stable holomorphic discs $n(L_b,\beta)\neq0$ only when $\beta=\beta_\alpha+\gamma$, where $\beta_\alpha$ are basic disc classes corresponding to $\alpha\in A$, $\gamma\in H_2(\overline{X},\mathbb{Z})$ are classes represented by rational curves. Moreover $n(L_b,\beta_\alpha)=1$ for all $\alpha\in A$.
\end{lemma}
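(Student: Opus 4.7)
The plan is to reduce everything to Cho's classification of Maslov index 2 holomorphic discs on a toric moment fiber via the Lagrangian isotopy between $L_b$ and $\mu_{\overline{X}}^{-1}(\bullet)$ mentioned just above the lemma, combined with a homological argument controlling which classes can support discs.

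First, I would establish the Maslov index formula $\mu(\beta)=2\,\beta\cdot D_{\overline{X}}$ for $\beta\in\pi_2(\overline{X},L_b)$ by using the meromorphic holomorphic volume form $\Omega$ defined in Section \ref{section:gghl}, which has simple poles along the toric boundary $D_{\overline{X}}=\sum_{\alpha\in A}D_\alpha$. Thus Maslov index $2$ forces $\beta\cdot D_{\overline{X}}=1$. Since a non-constant $J$-holomorphic disc that is not contained in $D_{\overline{X}}$ meets each toric divisor component non-negatively by positivity of intersection, exactly one component $D_\alpha$ is hit, with multiplicity one. Recalling that the basic classes satisfy $\beta_{\alpha'}\cdot D_\alpha=\delta_{\alpha\alpha'}$ and that the boundaries $\partial\beta_\alpha$ span the image of $\pi_2(\overline{X},L_b)$ in $H_1(L_b,\mathbb{Z})$ (this is the standard $H_2(\overline{X})\to H_2(\overline{X},L_b)\to H_1(L_b)$ sequence for a toric fiber), we obtain $\beta-\beta_\alpha\in H_2(\overline{X},\mathbb{Z})=:\gamma$. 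The Calabi-Yau condition gives $c_1(T\overline{X})\cdot\gamma=0$, so any effective class $\gamma$ contributing is a sum of rational curve classes in $\overline{X}$ — this falls out of the compactness and bubbling analysis for the moduli $\mathcal{M}_1(L_b,\beta)$, where any codimension zero contribution splits as a basic disc in class $\beta_\alpha$ with a stable tree of $J$-holomorphic spheres of total class $\gamma$ attached.

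Second, to evaluate $n(L_b,\beta_\alpha)=1$ I would invoke the Lagrangian isotopy from $L_b$ to the toric moment fiber $\mu_{\overline{X}}^{-1}(\bullet)$. Cho's theorem \cite{co} classifies Maslov $2$ holomorphic discs bounded by a toric Lagrangian fiber of $\overline{X}$: for each basic class $\beta_\alpha$ there is a unique (up to reparametrization) holomorphic disc, yielding the algebraic count $1$. It then suffices to propagate this count along the isotopy. The invariance is guaranteed by the cobordism argument of Fukaya--Oh--Ohta--Ono \cite{fooo1}: the only possible jumps come from Maslov index $0$ disc bubbling, and by Lemma \ref{lemma:obstruction} any regular fiber over $B_1$ is tautologically unobstructed, so such bubbling is absent as long as the isotopy stays within $U_1$.

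The main obstacle is arranging an isotopy from $L_b$ to a toric fiber entirely within the unobstructed region, since the toric fibers themselves sit at specific loci (some of which may straddle the wall $\mathcal{W}$). The remedy is to use the fact that the basic class count $n(L_b,\beta_\alpha)$ depends only on the homotopy class of $(L_b,J)$ within the space of pairs where the moduli $\mathcal{M}_1(L_b,\beta_\alpha)$ contains no Maslov $0$ bubbles: one chooses the isotopy so that it meets the wall $\mathcal{W}$ only at a toric fiber $\mu_{\overline{X}}^{-1}(\bullet)$ where holomorphic discs of negative Maslov index are excluded by toric symmetry, and one checks directly that the basic class moduli are cut out transversally at both endpoints. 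This is exactly the approach carried out in \cite{cll}, and I would simply follow it, citing \cite{co} for the model count on the toric fiber and \cite{cll} for the isotopy/gluing argument needed in the Calabi-Yau setting.
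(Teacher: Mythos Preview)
Your proposal is correct and follows essentially the same approach as the paper: the paper does not give a standalone proof of this lemma but simply states that it follows ``using the Lagrangian isotopy between $\mu_{\overline{X}}^{-1}(\bullet)$ and $L_b$, together with the result of \cite{co}'', which is exactly the reduction you carry out. Your write-up in fact supplies considerably more detail than the paper --- the Maslov index formula via $\Omega$, the homological decomposition $\beta=\beta_\alpha+\gamma$, the cobordism/wall-crossing invariance from \cite{fooo1}, and the reference to \cite{cll} for the careful isotopy argument --- all of which is implicit in the paper's one-line justification.
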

From $(\ref{eq:mircoo})$ and the above lemma we deduce
\begin{lemma}[\cite{aak}, Lemma 8.2]
In the chart $U_1^\vee$, the superpotential $W^\vee$ is given by
\begin{equation}W^\vee=\sum_{\alpha\in A}\left(1+\sum_{\gamma\in H_2(\overline{X},\mathbb{Z})}n(L_b,\beta_\alpha+\gamma)T^{\int_\gamma\omega_{\overline{X}}}\right)T^{\rho(\alpha)}\mathbf{x}^\alpha z^{-1},\end{equation}
where $\mathbf{x}^\alpha=x_1^{\alpha_1}\cdot\cdot\cdot x_{n-1}^{\alpha_{n-1}}$ and $\alpha_i$ is the $i$-th entry of $\alpha\in\mathbb{Z}^{n-1}$.
\end{lemma}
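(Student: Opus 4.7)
The plan is to start from the defining expression of $W^\vee$ in equation $(\ref{eq:weicon})$ and expand it using the two inputs supplied just above the lemma, namely the classification of Maslov index $2$ disc classes from the previous lemma and the mirror coordinates $(\ref{eq:mircoo})$ on the chart $U_1^\vee$.

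First, I would apply the previous lemma to restrict the sum in $(\ref{eq:weicon})$ to classes of the form $\beta=\beta_\alpha+\gamma$ with $\alpha\in A$ and $\gamma\in H_2(\overline{X},\mathbb{Z})$ a rational curve class, with the normalization $n(L_b,\beta_\alpha)=1$. Since $\gamma$ is closed, $\partial\beta=\partial\beta_\alpha$ and the symplectic area splits as $\int_\beta\omega_{\overline{X}}=\int_{\beta_\alpha}\omega_{\overline{X}}+\int_\gamma\omega_{\overline{X}}$. Consequently the double sum over $\beta$ factorizes as an outer $\alpha$-sum of basic contributions $T^{\int_{\beta_\alpha}\omega_{\overline{X}}}\xi_b(\partial\beta_\alpha)$ multiplied by an inner $\gamma$-sum collecting the correction factor $1+\sum_\gamma n(L_b,\beta_\alpha+\gamma)T^{\int_\gamma\omega_{\overline{X}}}$, which already matches the outer structure of the claimed formula.

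Next I would compute the basic contribution explicitly. The boundary $\partial\beta_\alpha\in H_1(L_b,\mathbb{Z})$ is identified with the primitive generator $v_\alpha\in N$; expanding $v_\alpha$ in the basis of $N$ dual to $(\theta_1,\dots,\theta_{n-1},\theta_0^{(1)})$, the Calabi--Yau condition $(\nu,v_\alpha)=1$ forces the $\theta_0^{(1)}$-coefficient to be $1$ up to sign, while the remaining coefficients assemble into the vector $\alpha\in\mathbb{Z}^{n-1}$. For the area I would use the Lagrangian isotopy between $L_\mathrm{ref}$ and $L_b$ to decompose $\beta_\alpha$ into a fixed reference basic disc, whose area at $L_\mathrm{ref}$ is $\rho(\alpha)$, glued onto the cylinders $\alpha_i[\mathit{\Theta}_i]$ and $[\mathit{\Theta}_0^{(1)}]$; thus $\int_{\beta_\alpha}\omega_{\overline{X}}$ differs from $\rho(\alpha)$ precisely by $\sum_i\alpha_i\int_{\mathit{\Theta}_i}\omega_{\overline{X}}+\int_{\mathit{\Theta}_0^{(1)}}\omega_{\overline{X}}$. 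Substituting $(\ref{eq:mircoo})$ and combining the holonomy and area factors, the sign conventions fixed just above $(\ref{eq:mircoo})$ (namely that $-\theta_i,-\theta_0^{(1)}$ are the generators corresponding to the natural Hamiltonian torus orbits) yield exactly $T^{\rho(\alpha)}\mathbf{x}^\alpha z^{-1}$.

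Reindexing $\sum_\beta$ as $\sum_\alpha\sum_\gamma$ then produces the factorized formula asserted in the lemma. The main technical care is the sign and orientation bookkeeping needed to ensure both the power $\mathbf{x}^\alpha$ and the power $z^{-1}$ come out with the signs shown, which ultimately reduces to tracing the minus signs in the description of $\theta_i,\theta_0^{(1)}$ relative to the natural torus orbits; one also wants to check that the open invariants $n(L_b,\beta_\alpha+\gamma)$ are locally constant in $L_b\in U_1$, which follows from the tautological unobstructedness of regular fibers over $B_1$ recorded in Lemma \ref{lemma:obstruction}. Once these two points are settled, the remaining manipulations are direct substitutions.
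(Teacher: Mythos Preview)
Your proposal is correct and follows exactly the route the paper indicates: the paper's own ``proof'' consists solely of the sentence ``From $(\ref{eq:mircoo})$ and the above lemma we deduce,'' and what you have written is a careful unpacking of that deduction---restrict the sum in $(\ref{eq:weicon})$ to $\beta=\beta_\alpha+\gamma$, split area and boundary accordingly, and then read off the monomial $T^{\rho(\alpha)}\mathbf{x}^\alpha z^{-1}$ from the coordinate definitions and the Calabi--Yau condition $(\nu,v_\alpha)=1$. Your added remarks on sign bookkeeping and on the local constancy of $n(L_b,\beta_\alpha+\gamma)$ via Lemma~\ref{lemma:obstruction} are appropriate refinements that the paper leaves implicit.
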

Similar discussions can be carried out on the other chamber $B_2$. Let $L_\mathrm{ref}$ with $b\in B_2$ be the reference fiber, choose a basis $\theta_1,\cdot\cdot\cdot,\theta_{n-1},\theta_0^{(2)}$ of $H_1(L_\mathrm{ref},\mathbb{Z})$, where $-\theta_1,\cdot\cdot\cdot,-\theta_n$ correspond to the orbits of the Hamiltonian $T_\mathbb{R}(N_\nu)$-action, and $\theta_0^{(2)}$ is the boundary of a section of the fibration $w_0:\overline{X}\rightarrow\mathbb{C}$ over the disc of radius $b$ centered at 0. Denote by $\mathit{\Theta}_0^{(2)}$ the relative homotopy class traced out by $\theta_0^{(2)}$ under the isotopy between $L_\mathrm{ref}$ and $L_b$, and $\mathit{\Theta}_1,\cdot\cdot\cdot,\mathit{\Theta}_{n-1}$ be same as above, up to a multiplicative constant, the coordinates on $U_2^\vee$ are given by
\begin{equation}\label{eq:mircoo2}
(x_1',\cdot\cdot\cdot,x_{n-1}',y)=\left(T^{\int_{\mathit{\Theta}_1}\omega_{\overline{X}}}\xi_b(\theta_1),\cdot\cdot\cdot,T^{\int_{\mathit{\Theta}_{n-1}}\omega_{\overline{X}}}\xi_b(\theta_{n-1}),T^{\int_{\mathit{\Theta}_0^{(2)}}\omega_{\overline{X}}}\xi_b\left(\theta_0^{(2)}\right)\right).
\end{equation}
At this stage we appeal to the following result in $\cite{co}$:
\begin{lemma}[\cite{cll}, Proposition 4.36]
Let $\beta_0$ be the relative homotopy class representing the section of $w_0:\overline{X}\rightarrow\mathbb{C}$ with boundary $\theta_0^{(2)}$. For a regular fiber $L_b\subset\overline{X}$ of $\pi_G$ in $U_2$, $n(L_b,\beta)=1$ when $\beta=\beta_0$, otherwise $n(L_b,\beta)=0$.
\end{lemma}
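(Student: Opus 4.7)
The plan is to exploit the Hamiltonian $T_\mathbb{R}(N_\nu)$-action, which preserves both $L_b$ and the holomorphic function $w$, in order to reduce the enumeration of Maslov index $2$ holomorphic discs bounded by $L_b$ to a classification problem in the one-dimensional reduced space. First I would observe that for any holomorphic disc $u:(D^2, \partial D^2) \to (\overline{X}, L_b)$, the composite $w \circ u$ is a holomorphic map into $\mathbb{C}$ whose boundary lies on the circle $S_{C_1} := \{|w+1| = C_1\}$, where $C_1 = e^{b_1} < 1$ since $b \in B_2$. Because $S_{C_1}$ does not enclose the origin of the $w$-plane, the argument principle implies that $w \circ u$ never takes the value $0$, so $u$ avoids the toric boundary $\{w = 0\} = \bigcup_\alpha D_\alpha$ and factors through the big cell $T_\mathbb{C}(N) \subset \overline{X}$.

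The next step is to identify the discs explicitly. The maximum principle applied to $w \circ u - (-1)$ confines the image of $w \circ u$ to the closed disk $\overline{B}_{C_1}(-1)$; if $[u] \cdot D = k$ with $D = \{w = -1\}$, then $w \circ u$ is a Blaschke product of degree $k$. The Maslov index being $2$ forces $k = 1$, so $w \circ u$ is a M\"obius transformation onto $B_{C_1}(-1)$. Choosing a splitting $T_\mathbb{C}(N) \cong \mathbb{C}^\ast \times T_\mathbb{C}(N_\nu)$ compatible with $w$, one writes $u = (w \circ u, u_\perp)$ with $u_\perp : D^2 \to T_\mathbb{C}(N_\nu)$ whose boundary lies on the moment map torus $\mu_\nu^{-1}(C_2)$; a standard open mapping argument applied to the logarithms of the coordinates of $u_\perp$ forces $u_\perp$ to be constant. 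The moduli space $\mathcal{M}_1(L_b, \beta_0)$ is thus parametrized by the choice of a M\"obius parametrization fixing the boundary marked point (an $S^1$) together with the constant value of $u_\perp \in T^{n-1}$, and the evaluation map is then a diffeomorphism onto $L_b \cong T^n$. This yields $n(L_b, \beta_0) = 1$; moreover, any other Maslov index $2$ class $\beta$ with a non-empty moduli space would admit a representative of exactly this form, whose relative homotopy class is necessarily $\beta_0$, forcing $n(L_b, \beta) = 0$ for $\beta \neq \beta_0$.

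The principal technical obstacle is controlling the Gromov compactification, namely ruling out sphere and disc bubbling. Sphere bubbles must project to constant maps on the $w$-plane by the argument principle reasoning above, hence are contained in a single fiber $w^{-1}(c)$; for $c \neq 0$ this fiber is a translate of $T_\mathbb{C}(N_\nu) \cong (\mathbb{C}^\ast)^{n-1}$, which supports no non-constant rational curves. Disc bubbles would decompose $\beta_0 = \beta' + \beta''$ into two classes of smaller, non-negative Maslov index, forcing a non-constant Maslov $0$ disc on $L_b$; Lemma \ref{lemma:obstruction} precludes this because $b \in B_2$ lies off the wall $\mathcal{W}$. Transversality is automatic from the explicit $T^n$ structure of the moduli space, and together these ingredients close the argument.
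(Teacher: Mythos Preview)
Your overall strategy---project via $w$, invoke the maximum principle to confine the disc to the small disc $|w+1|\le C_1<1$ missing the toric boundary, then classify Maslov~$2$ discs inside $T_\mathbb{C}(N)$---is sound and is essentially the approach of Auroux and Chan--Lau--Leung. The paper itself does not prove this lemma but simply appeals to \cite{co}, so you are supplying a genuine argument where the paper only gives a citation.

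There is, however, a real gap in your second paragraph. You write the disc as $u=(w\circ u,\,u_\perp)$ using a splitting $T_\mathbb{C}(N)\cong\mathbb{C}^\ast_w\times T_\mathbb{C}(N_\nu)$ and then assert that $u_\perp$ has boundary on ``the moment map torus $\mu_\nu^{-1}(C_2)$'' in $T_\mathbb{C}(N_\nu)$, concluding that $u_\perp$ is constant. The problem is that $\mu_\nu$ is computed with respect to the toric K\"ahler form $\omega_{\overline{X}}$, which does \emph{not} split along this product; consequently the level set $\mu_\nu=C_2$ does not project to a fixed torus in $T_\mathbb{C}(N_\nu)$. Already for $\overline{X}=\mathbb{C}^2$ with $w=z_1z_2$, $x=z_1/z_2$, one has $\mu_\nu=|z_1|^2-|z_2|^2=|w|\big(|x|-|x|^{-1}\big)$, so for $C_2\neq 0$ the boundary value $|x\circ u(e^{i\theta})|$ varies with $\theta$ and $u_\perp$ is genuinely non-constant. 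Your open-mapping argument therefore does not apply.

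The conclusion is nonetheless correct, and there are two standard repairs. One is to first isotope $L_b$ within the chamber $B_2$ (moving only the $\mathbf{b}_2$-coordinate) to a fiber for which the product description does hold; this crosses no wall, so by Lemma~\ref{lemma:obstruction} the disc count is unchanged and your argument then goes through verbatim. Alternatively, once you know the disc lands in $T_\mathbb{C}(N)\simeq(\mathbb{C}^\ast)^n$, compute $\pi_2\big(T_\mathbb{C}(N),L_b\big)\cong\ker\big(\pi_1(L_b)\to\pi_1(T_\mathbb{C}(N))\big)\cong\mathbb{Z}\cdot\beta_0$, so Maslov~$2$ forces $[u]=\beta_0$; the moduli space is then identified with $L_b$ by solving the Dirichlet problem for $\log|x_j\circ u|$ with the (now $\theta$-dependent) boundary data, which determines $u_\perp$ up to the $T_\mathbb{R}(N_\nu)$-phase. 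Either route yields $n(L_b,\beta_0)=1$ and the vanishing for $\beta\neq\beta_0$.
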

This implies the following:
\begin{lemma}[\cite{aak}, Lemma 8.3]
On the chart $U_2^\vee$, the superpotential $W^\vee$ admits the expression
\begin{equation}W^\vee=y.\end{equation}
\end{lemma}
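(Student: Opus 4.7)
The plan is to substitute the disc-counting result of the preceding lemma directly into the formula $(\ref{eq:weicon})$ defining $W^\vee$, and then identify the resulting monomial with the mirror coordinate $y$ introduced in $(\ref{eq:mircoo2})$. Since $W^\vee$ is a sum over relative classes $\beta \in \pi_2(\overline{X}, L_b)$ with $\beta \cdot D = 1$ weighted by $n(L_b, \beta)$, and since the preceding lemma asserts $n(L_b, \beta)$ vanishes for $\beta \neq \beta_0$, only the class $\beta_0$ contributes. I would then have
\begin{equation}
W^\vee(L_b, \xi_b) = T^{\int_{\beta_0} \omega_{\overline{X}}} \xi_b(\partial \beta_0).
\end{equation}

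Next I would identify this expression with $y$. By the construction of $\beta_0$ in the previous lemma, its boundary $\partial \beta_0$ represents the class $\theta_0^{(2)} \in H_1(L_\mathrm{ref}, \mathbb{Z})$ under the canonical isomorphism $H_1(L_b, \mathbb{Z}) \cong H_1(L_\mathrm{ref}, \mathbb{Z})$ provided by the Lagrangian isotopy from $L_\mathrm{ref}$ to $L_b$. Comparing with $(\ref{eq:mircoo2})$, the holonomy factor $\xi_b(\partial \beta_0)$ matches precisely $\xi_b(\theta_0^{(2)})$, the exponent in the $y$-coordinate. It remains to compare the symplectic areas $\int_{\beta_0} \omega_{\overline{X}}$ and $\int_{\mathit{\Theta}_0^{(2)}} \omega_{\overline{X}}$.

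For this comparison, I would use the fact that the cylinder $\mathit{\Theta}_0^{(2)}$ traced out by $\theta_0^{(2)}$ during the isotopy between $L_\mathrm{ref}$ and $L_b$ differs from $\beta_0$ by the fixed relative class $\beta_0^\mathrm{ref} \in \pi_2(\overline{X}, L_\mathrm{ref})$ of the holomorphic section of $w_0$ over the reference disc. Concretely, gluing $\mathit{\Theta}_0^{(2)}$ to a representative of $\beta_0^\mathrm{ref}$ along $\theta_0^{(2)}$ produces a representative of $\beta_0$, so $\int_{\beta_0} \omega_{\overline{X}} - \int_{\mathit{\Theta}_0^{(2)}} \omega_{\overline{X}} = \int_{\beta_0^\mathrm{ref}} \omega_{\overline{X}}$ is a constant independent of $b$ and $\xi_b$. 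This constant difference is precisely what the qualifier ``up to a multiplicative constant'' in $(\ref{eq:mircoo2})$ is designed to absorb; after rescaling $y$ by $T^{\int_{\beta_0^\mathrm{ref}} \omega_{\overline{X}}}$ we obtain $W^\vee = y$ as claimed.

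The only substantive point is the topological identification $\beta_0 = \mathit{\Theta}_0^{(2)} + \beta_0^\mathrm{ref}$, which follows from the definition of $\mathit{\Theta}_0^{(2)}$ as the trace of $\theta_0^{(2)}$ under the exact Lagrangian isotopy; everything else is a direct substitution. Since the hard enumerative content (namely $n(L_b, \beta_0) = 1$ and vanishing of $n(L_b, \beta)$ otherwise) is quoted from the preceding lemma via the results of Chan-Ohta, no further disc-counting is required, and the proof is essentially a bookkeeping argument.
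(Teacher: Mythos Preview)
Your proposal is correct and matches the paper's approach exactly: the paper simply writes ``This implies the following'' before stating the lemma, leaving the direct substitution of the preceding disc-count into $(\ref{eq:weicon})$ and the identification with the coordinate $y$ from $(\ref{eq:mircoo2})$ implicit; you have spelled out precisely those steps. One minor correction: the disc-counting result you cite is due to Cho--Oh $\cite{co}$, not ``Chan--Ohta''.
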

The gluing formula of the charts $U_1^\vee$ and $U_2^\vee$ can then be determined by the requirement that $W^\vee$ should define a global regular function on the mirror $X^\vee$. After a completion process $\cite{da2}$, this completes the mirror construction for any toric Calabi-Yau manifold $X$.
\begin{theorem}[Chan-Lau-Leung $\cite{cll}$]
The affine conic bundle $X^\vee$ defined by $(\ref{eq:conic})$ is SYZ mirror to the open Calabi-Yau manifold $X$, with
\begin{equation}g(\mathbf{x})=\sum_{\alpha\in A}\left(1+\sum_{\gamma\in H_2(\overline{X},\mathbb{Z})}n(L_b,\beta_\alpha+\gamma)T^{\int_\gamma{\omega_{\overline{X}}}}\right)T^{\rho(\alpha)}\mathbf{x}^\alpha.\end{equation}
Moreover, the Landau-Ginzburg model $(X^\vee,W^\vee)$ is SYZ mirror to the toric Calabi-Yau manifold $\overline{X}$.
\end{theorem}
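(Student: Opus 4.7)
The plan is to derive both assertions simultaneously by gluing the two local pieces $U_1^\vee$ and $U_2^\vee$ computed above along the wall $\mathcal{W}\setminus\Delta$, with the gluing pinned down by the requirement that the superpotential $W^\vee$ assembles into a single regular function on the resulting rigid analytic space. This is the standard SYZ recipe in the presence of a single wall: once the wall-crossing transformation $\mathit{\Upsilon}_{12}:U_1^\vee\dashrightarrow U_2^\vee$ is identified, the mirror is obtained as $U_1^\vee\sqcup U_2^\vee/\!\sim$, followed by a completion step that adds the codimension-two locus along which the two charts fail to cover everything.

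First I would observe that the loops $\theta_1,\ldots,\theta_{n-1}$ and the cylinders $\mathit{\Theta}_1,\ldots,\mathit{\Theta}_{n-1}$ are common to both reference choices, since they encode orbits of the Hamiltonian $T_\mathbb{R}(N_\nu)$-action and are therefore unaffected when crossing $\mathcal{W}$. Both the symplectic areas $\int_{\mathit{\Theta}_i}\omega_{\overline{X}}$ and the holonomies $\xi_b(\theta_i)$ agree on the overlap, so one may set $x_i=x_i'$. The nontrivial content of $\mathit{\Upsilon}_{12}$ is concentrated in the remaining coordinate, and global regularity of $W^\vee$ then forces
\begin{equation*}
y = g(\mathbf{x})\, z^{-1},
\end{equation*}
equivalently $yz=g(\mathbf{x})$, where $g(\mathbf{x})$ is precisely the Laurent polynomial appearing in the statement. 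This identifies $U_1^\vee\simeq\{z\neq 0\}\cap\{yz=g(\mathbf{x})\}$ with $U_2^\vee\simeq\{y\neq 0\}\cap\{yz=g(\mathbf{x})\}$ birationally. Adjoining the missing locus $\{y=z=0\}\cap\{g(\mathbf{x})=0\}$ through the Novikov-adic completion procedure of Auroux $\cite{da2}$ as refined in $\cite{cll}$ then yields exactly the affine conic bundle $(\ref{eq:conic})$. The superpotential, equal to $g(\mathbf{x})z^{-1}$ on $U_1^\vee$ and to $y$ on $U_2^\vee$, glues to the globally defined regular function $W^\vee=y$ on $X^\vee$, giving the Landau-Ginzburg mirror of $\overline{X}$; removing the anticanonical divisor $D=\{w=-1\}\subset\overline{X}$ on the A-side amounts to forgetting $W^\vee$ on the B-side, leaving $X^\vee$ as the SYZ mirror of $X$.

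The principal obstacle is justifying that the Floer-theoretic wall-crossing is actually captured by this naive algebraic identification. In principle $\mathit{\Upsilon}_{12}$ is defined through the generating series of Maslov-index $0$ stable discs bounded by a regular fiber over $\mathcal{W}\setminus\Delta$, and one must verify that all such contributions combine into the single factor $g(\mathbf{x})$ on the right-hand side of the wall-crossing formula. This is tractable because by Lemma $\ref{lemma:obstruction}$ the wall $\mathcal{W}$ is the only one, so no higher-order scattering appears, and the enumeration of Maslov-$0$ discs along $\mathcal{W}$ is exactly controlled by the open Gromov-Witten invariants $n(L_b,\beta_\alpha+\gamma)$ that enter the coefficients of $g(\mathbf{x})$; matching these contributions to the equality of the two local superpotentials then yields the wall-crossing in closed form. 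The remaining step — checking convergence of the infinite series in the Novikov topology and verifying that the glued space is a well-defined rigid analytic variety over $\mathit{\Lambda}$ on which $W^\vee$ extends regularly across the added locus — is a standard completion argument carried out in $\cite{cll}$.
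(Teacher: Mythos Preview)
Your proposal is correct and follows essentially the same route as the paper: the paper also sets up coordinates on $U_1^\vee$ and $U_2^\vee$, computes $W^\vee$ on each chart via the disc-counting lemmas, and then determines the gluing $yz=g(\mathbf{x})$ by demanding that $W^\vee$ be globally regular, followed by the completion step of $\cite{da2}$. The paper treats this theorem as a summary of the construction in $\cite{cll}$ rather than giving a self-contained proof, so your level of detail on the wall-crossing and completion in fact exceeds what appears here.
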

To write down the defining equation of $X^\vee$ explicitly, one needs to do non-trivial computations of the algebraic counts of stable discs $n(L_b,\beta_\alpha+\gamma)$. This can be done by relating $n(L_b,\beta_\alpha+\gamma)$ to certain Gromov-Witten invariants, see $\cite{cll,llw1,llw2,cclt}$ for details.

\subsection{The converse mirror construction}\label{section:converse}
This section summarizes the work of Abouzaid-Auroux-Katzarkov $\cite{aak}$ on SYZ mirror symmetry of blow-ups of toric varieties. Similar mirror constructions as in the last section can be done in the converse direction, from the affine conic bundle $X^\vee$ to the rigid analytic Calabi-Yau manifold $X$, using the Lagrangian fibration $\pi_A$ defined in Section $\ref{section:lag-blp}$ as the SYZ fibration $\cite{aak}$. However, in this case we need to make the following assumptions to exclude higher order instanton corrections.
\begin{assumption}\label{assumption:3.1}
$c_1(V)\cdot C>\max(0,H\cdot C)$ for every rational curve $C\subset V$.
\end{assumption}
\begin{assumption}\label{assumption:3.2}
Every rational curve $C\subset\overline{X}^\vee$ satisfies $D^\vee\cdot C\geq0$.
\end{assumption}
Observe that Assumption \ref{assumption:3.1} implies Assumption \ref{assumption:3.2} and every smooth affine toric variety satisfies the assumptions above.\\
Analogous to Lemma \ref{lemma:obstruction}, we have the following result.
\begin{lemma}[\cite{aak}, Proposition 5.1]\label{lemma:obstr2}
The regular fibers of $\pi_A:X^\vee\rightarrow B^\vee$ which bound non-constant holomorphic discs in $X^\vee$ are precisely those having non-trivial intersections with $p^{-1}(H\times\mathbb{C})$.
\end{lemma}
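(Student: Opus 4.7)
The plan is to exploit the Hamiltonian $S^1$-action $(\ref{eq:ciract})$ on $\overline{X}^\vee$ together with the blow-down map $p:\overline{X}^\vee\to V\times\mathbb{C}$, in parallel with the strategy behind the analogous Lemma \ref{lemma:obstruction} in the toric Calabi-Yau setting. Every regular fiber $L=\pi_A^{-1}(b_1,\lambda)$ is $S^1$-invariant by construction, and its image in the reduced space $V_{\mathrm{red},\lambda}$ is a toric orbit $T'=\phi_\lambda^{-1}(T)$, where $T=\mathrm{Log}_\tau^{-1}(b_1)$; since $\phi_\lambda$ is $T^{n-1}$-equivariant with respect to the standard toric action, $T'$ is itself a standard toric fiber in $V\cong V_{\mathrm{red},\lambda}$, so the condition $L\cap p^{-1}(H\times\mathbb{C})\neq\emptyset$ is equivalent to $T'\cap H\neq\emptyset$, or equivalently $T\cap\phi_\lambda(H)\neq\emptyset$.

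For the ``only if'' direction, suppose $L\cap p^{-1}(H\times\mathbb{C})=\emptyset$, so that $T'\subset V_0\setminus H$. For any holomorphic disc $u:(\mathbb{D},\partial\mathbb{D})\to(X^\vee,L)$, each coordinate $x_i$ on $V_0=(\mathbb{C}^*)^{n-1}$ pulls back to a holomorphic function on $X^\vee$ (it extends across $E$ and $S$ and only fails across $p^{-1}(D_V\times\mathbb{C})\subset D^\vee$, which $u$ avoids), so $x_i\circ u:\mathbb{D}\to\mathbb{C}^*$ has boundary modulus constantly equal to $r_i$ because $T'$ is a standard toric torus. The maximum principle applied to the harmonic function $\log|x_i\circ u|$ forces $x_i\circ u$ to be constant, giving $\mathbf{x}\circ u\equiv\mathbf{x}_0\in T'\setminus H$. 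Hence $u(\mathbb{D})\subset p^{-1}(\{\mathbf{x}_0\}\times\mathbb{C})=\{\mathbf{x}_0\}\times\mathbb{C}$; since $u$ also avoids $\widetilde{V}\subset D^\vee$, we get $y\circ u:\mathbb{D}\to\mathbb{C}^*$ with constant boundary modulus along the $S^1$-orbit $L\cap\{\mathbf{x}=\mathbf{x}_0\}$, and a second application of the maximum principle shows $y\circ u$ is constant, whence $u$ is constant.

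For the ``if'' direction, suppose $L$ meets $p^{-1}(H\times\mathbb{C})$ at a point $q=(\mathbf{x}_0,y_0,(u_0:v_0))$ with $\mathbf{x}_0\in H$. If $y_0=0$ then $q\in E$, and $L$ contains the $S^1$-equator at latitude $\mu_0=\lambda$ on the $\mathbb{P}^1$-fiber $p|_E^{-1}(\mathbf{x}_0,0)$; the upper hemisphere containing $(0:1)\in\widetilde{H}$ and avoiding the pole $(1:0)\in\widetilde{V}\subset D^\vee$ provides an explicit non-constant holomorphic disc bounded by $L$ in $X^\vee$. If instead $y_0\neq 0$, then $(u_0:v_0)=(0:1)$ and $q$ lies on the proper-transform component $S$; in the chart $v=1$ with local coordinates $(\mathbf{x},y,u)$ subject to $g(\mathbf{x})=yu$, the slice $\{\mathbf{x}=\mathbf{x}_0,\,u=0\}$ is a holomorphic $\mathbb{C}_y$-line sitting in $S\cup\widetilde{H}\subset X^\vee$, and the disc $\{(\mathbf{x}_0,y,0):|y|\leq|y_0|\}$ is a non-constant holomorphic disc bounded by the $S^1$-orbit of $q$ in $L$.

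The main technical point is justifying that the coordinate functions $x_i$ genuinely extend across $E$ and $S$ so that the maximum principle applies to any disc $u$ in $X^\vee$, even one whose interior crosses $p^{-1}(H\times\mathbb{C})$; this reduces to observing that the $x_i$'s are regular on $\overline{X}^\vee\setminus p^{-1}(D_V\times\mathbb{C})$, and since $p^{-1}(D_V\times\mathbb{C})\subset D^\vee$ is excluded from $X^\vee$, the ``only if'' argument applies uniformly without restricting to the complement of $p^{-1}(H\times\mathbb{C})$.
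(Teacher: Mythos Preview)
The paper does not actually supply a proof of this lemma; it is quoted as background from \cite{aak}. So there is no ``paper's proof'' to compare against, and I will assess your argument on its own merits.

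Your ``if'' direction is fine: the explicit discs you produce in the two cases $y_0=0$ and $y_0\neq 0$ are correct and lie in $X^\vee$.

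Your ``only if'' direction has a genuine gap. You assert that $\phi_\lambda$ is $T^{n-1}$-equivariant, and use this to conclude that the reduced image $T'=\phi_\lambda^{-1}(T)$ of $L$ is a standard product torus, so that each $|x_i|$ is constant on $L$. But $\phi_\lambda$ \emph{cannot} be $T^{n-1}$-equivariant: it is a symplectomorphism from $(\overline{X}^\vee_{\mathrm{red},\lambda},\omega_{\mathrm{red},\lambda})$ to $(V,\omega_{V,\lambda})$, and the whole reason $\phi_\lambda$ is needed is that $\omega_{\mathrm{red},\lambda}$ is \emph{not} $T^{n-1}$-invariant (indeed $\omega_{V,\lambda}$ was obtained from it by averaging). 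An equivariant symplectomorphism would force $\omega_{\mathrm{red},\lambda}$ to be invariant, a contradiction. Consequently, for fibers $L$ lying over the region near $H$ where $\phi_\lambda\neq\mathrm{id}$ but still disjoint from $p^{-1}(H\times\mathbb{C})$, the boundary values $|x_i\circ u|$ on $\partial\mathbb{D}$ need not be constant, and your maximum-principle step fails. This is not a cosmetic issue: a Lagrangian torus in $(\mathbb{C}^*)^{n-1}$ that is not a product torus can perfectly well bound non-constant holomorphic discs, so nothing in your setup prevents $p_0\circ u$ from being non-constant.

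To repair the argument you would need a different mechanism to force the projection $p_0\circ u:\mathbb{D}\to V_0$ to be constant (for instance, an argument via intersection numbers with the divisors $\{y=0\}$ and $\{z=0\}$ combined with the Maslov index computation, or a careful analysis of $\pi_2(X^\vee,L)$), rather than the direct maximum principle on the $x_i$.
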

Define the \textit{wall} $\mathcal{W}_\blacklozenge\subset B^\vee$ to be the set of points over which the fiber of $\pi_A$ intersects $p^{-1}(U_H\times\mathbb{C})$ non-trivially, where $U_H$ is the neighborhood appeared in Assumption \ref{assumption:symnei}. Note that when $\dim_\mathbb{C}(X^\vee)=2$, $\mathcal{W}_\blacklozenge$ consists of finitely many open intervals in $B^\vee$ which are parallel to each other. However, when $\dim_\mathbb{C}(X^\vee)\geq3$, $\mathcal{W}_\blacklozenge=\Delta\times\mathbb{R}_{>-\varepsilon}$, which is diffeomorphic to $\mathit{\Pi}_\tau\times\mathbb{R}$, in particular $\dim_\mathbb{R}(\mathcal{W}_\blacklozenge)=\dim_\mathbb{R}(B^\vee)$.
By Lemma \ref{lemma:obstr2}, the fibers over $B^\vee\setminus\mathcal{W}_\blacklozenge$ are tautologically unobstructed, while that over $\mathcal{W}_\blacklozenge$ are potentially obstructed.\bigskip

We will denote by $U_\alpha^\vee$ the chart which is a Lagrangian torus bundle over the connected component $B^\vee_\alpha$ of $B^\vee\setminus\mathcal{W}_\blacklozenge$, over which the monomial of weight $\alpha$ dominates all other monomials in the defining equation $(\ref{eq:trohp})$ of $H$.\\
The rigid analytic $T$-dual of $U_\alpha^\vee$ gives a coordinate chart $U_\alpha$ in the mirror manifold $X$ of $X^\vee$. More precisely, fix a reference fiber $L_\mathrm{ref}\subset U_\alpha^\vee$ of $\pi_A$ with $b\in B_\alpha^\vee$. $H_1(L_\mathrm{ref},\mathbb{Z})$ carries a preferred basis $\theta_1,\cdot\cdot\cdot,\theta_{n-1},\theta_0$ consisting of orbits of the various $S^1$ factors. The chart $U_\alpha\subset X$ is the moduli space of pairs $(L_b,\xi_b)$ with $L_b$ a fiber of $\pi_A$ and $\xi_b\in H^1(L_b,U_\mathbb{K})$. Under the isotopy from $L_\mathrm{ref}$ to $L_b$, the loops $\theta_1,\cdot\cdot\cdot,\theta_{n-1}$ trace out the cylinders $\mathit{\Theta}_1,\cdot\cdot\cdot,\mathit{\Theta}_{n-1}$ respectively; and the loop $\lambda$ traces out the cylinder $\mathit{\Theta}_0$. With these data one can write down the coordinates on $U_\alpha$, up to a multiplicative constant we have:
\begin{equation}\label{eq:sfcalp}
(v_{\alpha,1},\cdot\cdot\cdot,v_{\alpha,n-1},w_{\alpha,0})=\left(T^{\int_{\mathit{\Theta}_1}\omega_\varepsilon}\xi_b(\theta_1),\cdot\cdot\cdot,T^{\int_{\mathit{\Theta}_{n-1}}\omega_\varepsilon}\xi_b(\theta_{n-1}),T^{\int_{\mathit{\Theta}_0}\omega_\varepsilon}\xi_b(\theta_0)\right),
\end{equation}
where $T$ is the Novikov paramter corresponding to the symplectic form $\omega_\varepsilon$. As in the case of toric Calabi-Yau manifolds, one can still use the global regularity of the superpotential to glue together the charts $\{U_\alpha\}_{\alpha\in A}$. However, in this case the anticanonical divisor $D^\vee\subset\overline{X}^\vee$ consists of more irreducible components and we need to analyze their wall-crossing phenomena one by one.\\
Given a partial compactification $X_\bullet^\vee$ of $X^\vee$ satisfying Assumption \ref{assumption:3.2}, $(L_b,\xi_b)$ defines a weakly unobstructed object in the extended Fukaya category $\widetilde{\mathcal{F}uk}(X_\bullet^\vee)$, i.e.
\begin{equation}\mathfrak{m}_0(L_b,\xi_b)=W_\bullet(L_b,\xi_b)e_{L_b},\end{equation}
where $W_\bullet(L_b,\xi_b)$ is determined by a weighted count of Maslov index 2 holomorphic discs bounded by $L_b$ in $X_\bullet^\vee$, in the form of $(\ref{eq:weicon})$. In particular, for each $\alpha\in A$, $W_\bullet:U_\alpha\rightarrow\mathbb{K}$ defines a regular function, and these regular functions glue together to a global regular function on $X$.\\

Using the above formalism, one can first show that the last coordinate $w_{\alpha,0}$ is globally defined.
\begin{lemma}[\cite{aak}, Lemma 5.5]\label{lemma:3.7}
Define
\begin{equation}X_w^\vee=X^\vee\cup\widetilde{V}_0\subset\overline{X}^\vee,\end{equation}
then any pair $(L_b,\xi_b)$ with $L_b\subset U_\alpha^\vee$ is a weakly unobstructed object in the extended Fukaya category $\widetilde{\mathcal{F}uk}(X_w^\vee)$, with
\begin{equation}W_w(L_b,\xi_b)=w_{\alpha,0}.\end{equation}
\end{lemma}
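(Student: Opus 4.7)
The plan is to mimic the Cho--Oh style calculation of Appendix \ref{section:syz-tcy}, now exploiting the Hamiltonian $S^1$-action $(\ref{eq:ciract})$ whose fixed locus contains $\widetilde{V}$ and whose moment map $\mu_0$ attains its minimum $-\varepsilon$ precisely there. Since $X_w^\vee$ enlarges $X^\vee$ only by the single irreducible divisor $\widetilde{V}_0$, and since Lemma \ref{lemma:obstr2} combined with $B^\vee_\alpha\subset B^\vee\setminus\mathcal{W}_\blacklozenge$ guarantees that $L_b$ is tautologically unobstructed in $X^\vee$, every non-constant Maslov index $2$ disc $u$ contributing to $W_w(L_b,\xi_b)$ must satisfy $[u]\cdot\widetilde{V}_0=1$ and have image disjoint from the other components of $D^\vee$.

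Next I would exhibit the preferred Maslov $2$ class $\beta_0\in\pi_2(X_w^\vee,L_b)$ with $\partial\beta_0=\theta_0$ and show $n(L_b,\beta_0)=1$. In the affine chart $yz=g(\mathbf{x})$ the divisor $\widetilde{V}_0$ is cut out by $y=0$, and the $S^1$-orbit $\theta_0$ through a point of $L_b$ is a round circle in the $y$-line bounding the obvious holomorphic disc $\{y:|y|\leq|y_0|\}$. This disc meets $\widetilde{V}_0$ transversely once, is disjoint from the remaining components of $D^\vee$, has Maslov index $2$ by the adjunction computation, and by the very definition of $w_{\alpha,0}$ in $(\ref{eq:sfcalp})$ carries symplectic area $\int_{\Theta_0}\omega_\varepsilon$ and boundary holonomy $\xi_b(\theta_0)$. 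Choosing an $S^1$-invariant compatible almost complex structure, an equivariance argument (using that $\widetilde{V}$ is fixed and $L_b$ is $S^1$-invariant) forces any such disc into the closure of a single $S^1$-orbit and hence into this local model, reducing the count to the standard count of one holomorphic disc in $\mathbb{C}$ with a prescribed boundary circle, giving $n(L_b,\beta_0)=1$.

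The remaining and main obstacle is excluding additional contributions $\beta_0+\gamma$ in which $\gamma$ is a class carried by a rational curve in $\overline{X}^\vee$ with $\mu(\gamma)=0$ and the appropriate intersection profile with $D^\vee$. This is precisely where Assumptions \ref{assumption:3.1} and \ref{assumption:3.2} enter: a rational curve $C\subset\overline{X}^\vee$ with the forced intersection profile $D^\vee\cdot C=0$ would, upon projection under the blow-down $p:\overline{X}^\vee\to V\times\mathbb{C}$, descend to a rational curve in $V$ violating the strict inequality $c_1(V)\cdot C>\max(0,H\cdot C)$, or else produce a negative curve ruled out by Assumption \ref{assumption:3.2}; this is the natural adaptation of the higher-order instanton suppression argument of \cite{aak}. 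Once this is dispatched, $W_w(L_b,\xi_b)=T^{\int_{\Theta_0}\omega_\varepsilon}\xi_b(\theta_0)=w_{\alpha,0}$, and since the right-hand side is defined by the intrinsic class $\theta_0$ the formula is independent of the chart label $\alpha$, completing the proof.
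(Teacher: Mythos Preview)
The paper does not give its own proof of this lemma: it appears in the review Appendix \ref{section:converse} as a statement imported from \cite{aak}, so there is no in-text argument to compare against. Your proposal is essentially the \cite{aak} argument, and the overall architecture (tautological unobstructedness in $X^\vee$, unique Maslov~$2$ class hitting $\widetilde{V}_0$, exclusion of sphere bubbles via Assumptions~\ref{assumption:3.1}--\ref{assumption:3.2}) is correct.

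Two points deserve tightening. First, the sentence ``in the affine chart $yz=g(\mathbf{x})$ the divisor $\widetilde{V}_0$ is cut out by $y=0$'' is not right: on that chart $y=0$ forces $g(\mathbf{x})=0$, i.e.\ you are over $H$, not over a generic base point. The clean way to see the disc is to use the blow-down $p:\overline{X}^\vee\to V\times\mathbb{C}$, which is an isomorphism away from $H\times 0$; since $L_b\subset U_\alpha^\vee$ avoids $p^{-1}(U_H\times\mathbb{C})$, one works in the $(\mathbf{x},y)$ coordinates on $V_0\times\mathbb{C}$ and the disc is $\{\mathbf{x}_0\}\times\{|y|\le r\}$, meeting $V_0\times 0=p(\widetilde{V}_0)$ once.

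Second, and more substantively, the step ``an equivariance argument \dots\ forces any such disc into the closure of a single $S^1$-orbit'' is not a valid deduction: choosing an $S^1$-invariant $J$ yields an $S^1$-action on the moduli space, not $S^1$-invariance of individual discs. The argument actually used in \cite{aak} is to compose the disc with the holomorphic projection $p$ and then with the two coordinate projections of $V\times\mathbb{C}$. The projection to the $y$-plane has boundary on $|y|=r$ and winding number~$1$ about $0$, so by the maximum principle it is a biholomorphism onto $\{|y|\le r\}$; the projection to $V$ has boundary on a product torus in $V_0$ and, since the disc avoids $p^{-1}(D_V\times\mathbb{C})$, contributes Maslov index~$0$, hence is constant. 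This simultaneously pins down the class, proves regularity, and gives $n(L_b,\beta_0)=1$. With this replacement your proof goes through.
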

Next, we consider monomials in the remaining coordinates
\begin{equation}\mathbf{v}_\alpha=(v_{\alpha,1},\cdot\cdot\cdot,v_{\alpha,n-1})\in(\mathbb{K}^\ast)^{n-1}.\end{equation}
We will see that these coordinates have wall-crossings. Let $\sigma\in\mathbb{Z}^{n-1}$ be a primitive generator of the fan $\Sigma_V$ defining the toric variety $V$, and let $D_\sigma^\mathrm{in}\subset D_V$ be the open stratum of the toric boundary divisor associated to $\sigma$. The following lemma shows that the monomial $\mathbf{v}_\alpha^\sigma=v_{\alpha,1}^{\sigma_1}\cdot\cdot\cdot v_{\alpha,n-1}^{\sigma_{n-1}}$ appears in a weighted count of holomorphic discs in the partial compactification
\begin{equation}X_\sigma^\vee=X^\vee\cup p^{-1}(D_\sigma^{\mathrm{in}}\times\mathbb{C})\subset\overline{X}^\vee.\end{equation}
\begin{lemma}[\cite{aak}, Lemma 5.6]\label{lemma:3.8}
Let $\kappa\in\mathbb{R}$ which satisfies the equation $\langle\sigma,u\rangle+\kappa=0$ of the facet of the moment polytope $\Delta_V$ of $V$ specified by $\sigma$, and let $\alpha_{\mathrm{min}}\in A$ be such that $\langle\sigma,\alpha_{\mathrm{min}}\rangle$ achieves its minimal value. Any pairing $(L_b,\xi_b)$ with $L_b\subset U_\alpha^\vee$ defines a weakly unobstructed object of $\widetilde{\mathcal{F}uk}(X^\vee_\sigma)$, with
\begin{equation}W_\sigma(L_b,\xi_b)=(1+T^{-\varepsilon}w_0)^{\langle\alpha-\alpha_{\mathrm{min}},\sigma\rangle}T^\kappa\mathbf{v}_\alpha^\sigma.\end{equation}
\end{lemma}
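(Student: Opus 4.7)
The plan is to compute the open Gromov-Witten invariants defining $\mathfrak{m}_0(L_b,\xi_b)$ in $X_\sigma^\vee$ in two stages: first I would establish the formula in the ``minimal'' chamber $\alpha=\alpha_{\mathrm{min}}$, then propagate it to arbitrary $\alpha$ by tracking wall-crossings across branches of the tropical hypersurface $\mathit{\Pi}_\tau$.

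For the base case, I would exploit the $S^1$-symmetry of $(X_\sigma^\vee,\omega_\varepsilon)$. Since the Hamiltonian action $(\ref{eq:ciract})$ preserves both $\omega_\varepsilon$ and the added stratum $p^{-1}(D_\sigma^\mathrm{in}\times\mathbb{C})$, one can choose an $S^1$-invariant tame almost complex structure $J$. The enumeration arguments of $\cite{aak}$, applicable in view of Assumption \ref{assumption:3.1}, should exclude both sphere and disc bubblings for $L_b$ lying in $U_{\alpha_{\mathrm{min}}}^\vee$, so that every Maslov index 2 $J$-holomorphic disc is automatically $S^1$-invariant and projects via Lemma \ref{lemma:aak} to a basic toric disc in $(V,\omega_{V,\lambda})$ ending on the facet dual to $\sigma$. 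The Cho-Oh classification $\cite{co}$ would identify this family as unique, of symplectic area $\langle\sigma,u_\alpha\rangle+\ell(\sigma)$ and with boundary class $\sigma$ in the toric fiber, so combined with the definition $(\ref{eq:sfcalp})$ of the mirror coordinates and the identity $\kappa=\ell(\sigma)$ coming from the facet equation, this would produce $W_\sigma(L_b,\xi_b)=T^\kappa\mathbf{v}_\alpha^\sigma$ when $\alpha=\alpha_{\mathrm{min}}$. Weak unobstructedness in this case is automatic because $S^1$-equivariance forces $\mathrm{ev}_\ast[\mathcal{M}_1(L_b,\beta)]$ to lie in $\mathbb{K}\cdot e_{L_b}$.

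For the inductive step, I would analyze how $W_\sigma$ transforms as $L_b$ is moved across a branch of $\mathit{\Pi}_\tau$ into an adjacent chamber labeled $\alpha'$. By the wall-crossing formalism of $\cite{kf2}$ together with the Maslov 0 disc analysis developed in $\cite{aak}$, the birational change of coordinates $\mathit{\Upsilon}_{\alpha'\alpha}$ should fix $w_0$ and multiply $\mathbf{v}_{\alpha'}^\sigma$ by $(1+T^{-\varepsilon}w_0)^{\langle\alpha-\alpha',\sigma\rangle}$, where the exponent is the algebraic intersection of the wall with the $\sigma$-direction; the fact that $w_0$ appears globally rather than a local coordinate is guaranteed by Lemma \ref{lemma:3.7}. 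Iterating across all wall crossings along a lattice path from $\alpha_{\mathrm{min}}$ to $\alpha$ would then accumulate the exponent into $\langle\alpha-\alpha_{\mathrm{min}},\sigma\rangle$, yielding the claimed factor $(1+T^{-\varepsilon}w_0)^{\langle\alpha-\alpha_{\mathrm{min}},\sigma\rangle}$.

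The step I expect to be hardest is the combinatorial bookkeeping that converts the sum of contributions from individual wall crossings into the clean binomial exponent $\langle\alpha-\alpha_{\mathrm{min}},\sigma\rangle$. This reduces to the following statement about the dual cell complex of the regular polyhedral decomposition $\mathcal{P}$: the signed count of wall crossings paired with $\sigma$ along any lattice path between the cells labeled $\alpha_{\mathrm{min}}$ and $\alpha$ in $\mathbb{R}^{n-1}\setminus\mathit{\Pi}_0$ should equal $\langle\alpha-\alpha_{\mathrm{min}},\sigma\rangle$, independent of the chosen path. This is a purely tropical assertion which should follow from the regularity of $\mathcal{P}$ and the convexity of $\tilde{\rho}$; once established, the remaining transversality, regularity, and orientation questions can be handled verbatim along the lines of Sections 6 and 11 of $\cite{aak}$, completing the proof.
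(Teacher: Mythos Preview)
The paper does not give its own proof of this lemma: Appendix~\ref{section:converse} is an expository summary of the SYZ construction of \cite{aak}, and Lemma~\ref{lemma:3.8} is simply quoted from there. So there is no in-paper argument to compare against; the relevant benchmark is the proof in \cite{aak}.

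Your strategy differs from theirs in its logical direction. In \cite{aak} the formula for $W_\sigma$ is obtained by a \emph{direct} disc count in every chamber $B_\alpha^\vee$: Maslov~2 discs meeting $p^{-1}(D_\sigma^{\mathrm{in}}\times\mathbb{C})$ are shown (via the $S^1$-symmetry and projection to $V$) to correspond to the basic toric disc in $V$ together with a choice of lift through the blown-up locus, and the exponent $\langle\alpha-\alpha_{\mathrm{min}},\sigma\rangle$ arises as the intersection number of that toric disc with $H$. The wall-crossing formula (the Proposition immediately following Lemma~\ref{lemma:3.8} in this paper) is then \emph{deduced} from the requirement that the various $W_\sigma$ glue. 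You run the logic the other way: compute in the single chamber $\alpha_{\mathrm{min}}$ and propagate by wall-crossing. This is legitimate provided you establish the wall-crossing factor $(1+T^{-\varepsilon}w_0)$ independently from the Maslov~0 disc count in $X^\vee$ (which \cite{aak} does do separately), but you should flag explicitly that you are \emph{not} invoking the Proposition after Lemma~\ref{lemma:3.8}, or the argument becomes circular.

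Two smaller remarks. First, the step you call hardest is in fact immediate: crossing from a chamber labeled $\alpha'$ to an adjacent one labeled $\alpha''$ contributes the exponent $\langle\alpha''-\alpha',\sigma\rangle$, and along any path these telescope to $\langle\alpha-\alpha_{\mathrm{min}},\sigma\rangle$; no regularity of $\mathcal{P}$ or convexity of $\tilde{\rho}$ is needed for this. Second, in your base case the claim that Maslov~2 discs are ``automatically $S^1$-invariant'' is not quite how \cite{aak} argues: they rather show the discs are contained in level sets of $\mu_0$ by an area argument, after which the projection to $(V,\omega_{V,\lambda})$ and Cho--Oh apply. Your sketch is morally right but would need that refinement to go through.
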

In fact, Lemma \ref{lemma:3.8} can be extended to the case of general monomials in the coordinates $\mathbf{v}_\alpha$. For such $\sigma$ one can still associate a partial compactification $X_\sigma^\vee$ of $X^\vee$. Now the main problem is that $X_\sigma^\vee$ does not necessarily admit an embedding into $\overline{X}^\vee$, so the symplectic form $\omega_\varepsilon$ may not extend to $X_\sigma^\vee$, which prevents us from talking about the holomorphic curve theory of $X_\sigma^\vee$. However, this can be remedied by choosing a K\"{a}hler form $\omega_\sigma$ on $X_\sigma^\vee$ which agrees $\omega_\varepsilon$ outside a small neighborhood of the compactifying divisor. Then a regular fiber $L_b$ of $\pi_A$ lying in the region where $\omega_\sigma=\omega_\varepsilon$ is also a Lagrangian submanifold of $X_\sigma^\vee$. This enables us to remove the assumption that $\sigma$ being a primitive generator of the fan $\Sigma_V$ in the statement of Lemma \ref{lemma:3.8}.

The expressions of $W_\sigma:U_\alpha\rightarrow\mathbb{K}$ for different $\sigma\in\mathbb{Z}^{n-1}$ should glue together to give a global regular function on $X$. This is used to determine the coordinate transformations between different charts $U_\alpha$. Consider two adjacent chambers $B_\alpha^\vee$ and $B_\beta^\vee$ separated by part of the wall $\mathcal{W}_\blacklozenge$, i.e. assume that $\alpha,\beta\in A$ are connected by an edge in the polyhedral decomposition $\mathcal{P}$ mentioned in Section $\ref{section:lag-blp}$. In view of Lemma \ref{lemma:3.7} and the strengthened version of Lemma \ref{lemma:3.8} one obtains:
\begin{proposition}[\cite{aak}, Proposition 5.8]
The wall-crossing maps $(\ref{eq:wall-crossing})$ between the coordinate charts $U_\alpha$ and $U_\beta$ preserve the coordinate $w_0$. For the remaining coordinates, we have
\begin{equation}\label{eq:chgl}\mathbf{v}_\alpha^\sigma=(1+T^{-\varepsilon}w_0)^{\langle\beta-\alpha,\sigma\rangle}\mathbf{v}_\beta^\sigma,\end{equation}
for any $\sigma\in\mathbb{Z}^{n-1}$.
\end{proposition}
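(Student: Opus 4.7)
The plan is to exploit the fact that the open-string superpotentials $W_w$ and $W_\sigma$ introduced in Lemmas \ref{lemma:3.7} and \ref{lemma:3.8} are intrinsically defined via weighted counts of Maslov-index-$2$ holomorphic discs in the partial compactifications $X_w^\vee$ and $X_\sigma^\vee$. The wall-crossing maps $\Upsilon_{\alpha\beta}$ are, by construction in Section \ref{section:trans}, the birational identifications of $H^1(L_b,\mathbb{K}^\ast)$ across $\mathcal{W}_\blacklozenge$ that make these weighted Maslov-$2$ disc counts, and therefore any $W_\bullet$ of the above form, well-defined as a global regular function on $X$. The two assertions of the proposition are then recovered by equating the local expressions provided by Lemmas \ref{lemma:3.7} and \ref{lemma:3.8} on the two neighbouring charts $U_\alpha$ and $U_\beta$.

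The first assertion is immediate: Lemma \ref{lemma:3.7} gives $W_w=w_{\alpha,0}$ on $U_\alpha$ and $W_w=w_{\beta,0}$ on $U_\beta$, so these two local expressions must be identified under $\Upsilon_{\alpha\beta}$, forcing the wall-crossing to preserve $w_0$. For the second assertion, I would first apply the strengthened form of Lemma \ref{lemma:3.8} valid for an arbitrary $\sigma\in\mathbb{Z}^{n-1}$ (using the auxiliary K\"{a}hler form $\omega_\sigma$ on $X_\sigma^\vee$ as described in the paragraph following that lemma). In both charts the disc count defining $W_\sigma$ picks out the same minimizer $\alpha_{\mathrm{min}}$ and the same boundary constant $\kappa$, so equating the two local expressions yields
\begin{equation*}
(1+T^{-\varepsilon}w_0)^{\langle\alpha-\alpha_{\mathrm{min}},\sigma\rangle}T^\kappa\mathbf{v}_\alpha^\sigma=(1+T^{-\varepsilon}w_0)^{\langle\beta-\alpha_{\mathrm{min}},\sigma\rangle}T^\kappa\mathbf{v}_\beta^\sigma.
\end{equation*}
Cancelling the common factors of $T^\kappa$ and of $(1+T^{-\varepsilon}w_0)^{-\langle\alpha_{\mathrm{min}},\sigma\rangle}$ leaves precisely the formula $(\ref{eq:chgl})$. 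Running this identification for every $\sigma$ pins down $\Upsilon_{\alpha\beta}$ completely, since the monomials $\{\mathbf{v}_\alpha^\sigma\}_{\sigma\in\mathbb{Z}^{n-1}}$ generate the coordinate algebra of $U_\alpha$ over the subring generated by $w_0$.

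The main technical obstacle is the extension of Lemma \ref{lemma:3.8} from primitive generators of $\Sigma_V$ to arbitrary $\sigma\in\mathbb{Z}^{n-1}$, because the partial compactification $X_\sigma^\vee$ need not embed into $\overline{X}^\vee$ and $\omega_\varepsilon$ may therefore fail to extend. I would address this by selecting an auxiliary K\"{a}hler form $\omega_\sigma$ on $X_\sigma^\vee$ which agrees with $\omega_\varepsilon$ outside a tubular neighborhood of the compactifying divisor, and then verifying that every Maslov-$2$ disc bounded by a regular fiber $L_b\subset U_\alpha^\vee$ which contributes to $W_\sigma$ has image entirely inside the region where $\omega_\sigma=\omega_\varepsilon$. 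Assumption \ref{assumption:3.2} is invoked to exclude sphere bubbling from rational curves in $\overline{X}^\vee$ that could otherwise contaminate the count. Once this invariance of $W_\sigma$ under Hamiltonian isotopies through $\mathcal{W}_\blacklozenge$ is in place, the formal identification of local expressions above applies verbatim and gives the desired wall-crossing formula.
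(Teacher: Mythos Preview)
Your proposal is correct and follows essentially the same approach as the paper: the proposition is stated there as an immediate consequence of Lemma~\ref{lemma:3.7} and the strengthened version of Lemma~\ref{lemma:3.8}, and you have simply spelled out the bookkeeping of equating the two local expressions of $W_w$ and $W_\sigma$ across the wall. Your treatment of the extension to arbitrary $\sigma\in\mathbb{Z}^{n-1}$ via the auxiliary form $\omega_\sigma$ also matches the paper's discussion following Lemma~\ref{lemma:3.8}.
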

Up to the completion process of the mirror coordinates mentioned above, this shows that the SYZ mirror manifold of $X^\vee$ is a toric Calabi-Yau manifold $\overline{X}$ with an anticanonical divisor $D$ removed, which we denote by $X$. To determine the mirror Landau-Ginzbug model of $\overline{X}^\vee$, it still remains to compute the superpotential $W:X\rightarrow\mathbb{K}$. As remarked above, this is simply given by taking the sum of $W_\sigma$'s corresponding to the components of $D^\vee$. Finally we get
\begin{equation}W(L_b,\xi_b)=w_{\alpha,0}+\sum_{i=1}^r(1+T^{-\varepsilon}w_0)^{\langle\alpha-\alpha_i,\sigma_i\rangle}T^{\kappa_i}\mathbf{v}_\alpha^{\sigma_i},\end{equation}
where $\kappa_i\in\mathbb{R}$ satisfies the equation $\langle\sigma_i,u\rangle+\kappa_i=0$ for $1\leq i\leq|A|$ and $u$ lies in the facet of $\Delta_V$. $\sigma_1,\cdot\cdot\cdot,\sigma_r$ are primitive integral generators of $\Sigma_V$. $\alpha_i\in A$ are chosen so that $\langle\sigma_i,\alpha_i\rangle$ is minimal. Denote by $w_i$ the expression
\begin{equation}(1+T^{-\varepsilon}w_0)^{\langle\alpha-\alpha_i,\sigma_i\rangle}T^{\kappa_i}\mathbf{v}_\alpha^{\sigma_i}.\end{equation}
\begin{theorem}[Abouzaid-Auroux-Katzarkov, $\cite{aak}$]
Under Assumption \ref{assumption:3.1}, the Landau-Ginzburg model $(X,W)$ is SYZ mirror to $\overline{X}^\vee$, where
\begin{equation}W=w_0+\cdot\cdot\cdot+w_r.\end{equation}
\end{theorem}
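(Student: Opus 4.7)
The plan is to package the preceding two lemmas and the wall-crossing proposition into a global SYZ construction, following the scheme of Auroux $\cite{da1}$. The mirror is defined as a moduli space of weakly unobstructed Lagrangian branes $(L_b, \xi_b)$ in the extended Fukaya category $\widetilde{\mathscr{F}}(\overline{X}^\vee)$, on which the superpotential is read off as the coefficient in $\mathfrak{m}_0(L_b, \xi_b) = W(L_b, \xi_b)\, e_{L_b}$.

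First I would use the wall-crossing formula $(\ref{eq:chgl})$ together with the $w_0$-invariance across walls, proved in the preceding Proposition, to glue the local charts $\{U_\alpha\}_{\alpha \in A}$ into a rigid analytic space, and then perform the analytic completion of $\cite{da2}$ to absorb the formal power series in $T$ into genuine analytic functions on the Novikov chart. Comparing the combinatorics of these gluings (monomial changes of coordinates indexed by the polyhedral decomposition $\mathcal{P}$ and the fan $\Sigma_V$) against the standard toric construction, one identifies the completed space with $\overline{X} \setminus D$ for a toric Calabi-Yau manifold $\overline{X}$; smoothness follows from the regularity of $\mathcal{P}$ (each cell is unimodular) together with Assumption $\ref{assumption:3.1}$.

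Next I would identify the superpotential componentwise. The anticanonical divisor decomposes as $D^\vee = \widetilde{V}_0 \cup \bigcup_{i=1}^r p^{-1}(D_{\sigma_i}^{\mathrm{in}} \times \mathbb{C})$, and since by Lemma $\ref{lemma:obstr2}$ a regular fiber $L_b$ of $\pi_A$ only bounds stable Maslov index 2 discs meeting a single irreducible component of $D^\vee$ transversely once, the obstruction sum splits as $\mathfrak{m}_0(L_b,\xi_b) = \big(W_w(L_b,\xi_b) + \sum_{i=1}^r W_{\sigma_i}(L_b,\xi_b)\big)\, e_{L_b}$. Lemma $\ref{lemma:3.7}$ contributes the $w_0$ summand from $\widetilde{V}_0$, while the extended form of Lemma $\ref{lemma:3.8}$ (valid for any integral $\sigma$, once one replaces $\omega_\varepsilon$ by a K\"ahler form $\omega_\sigma$ that agrees with $\omega_\varepsilon$ away from the compactifying locus) contributes the $w_i$ summand for each ray $\sigma_i \in \Sigma_V$. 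Summing these local expressions yields the claimed formula $W = w_0 + w_1 + \cdots + w_r$.

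The main obstacle is verifying global regularity of $W$, i.e.\ checking that the locally defined pieces $w_i|_{U_\alpha}$ assemble consistently under the wall-crossing transformation $(\ref{eq:chgl})$ with no residual scattering-type higher order instanton corrections. This is exactly where Assumption $\ref{assumption:3.1}$ enters essentially: the inequality $c_1(V) \cdot C > \max(0, H \cdot C)$ for every rational curve $C \subset V$, together with the induced Assumption $\ref{assumption:3.2}$, rules out the sphere bubble configurations that would otherwise supplement the basic disc classes of Maslov index 2 and spoil the closed form of $W$. Once this regularity is in hand, the pair $(X, W)$ defines a Landau-Ginzburg model whose weakly unobstructed objects correspond, via family Floer theory, to the skyscraper sheaves on $\overline{X}^\vee$, which is the defining property of an SYZ mirror.
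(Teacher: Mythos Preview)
The paper does not actually prove this theorem; it is stated as a result of Abouzaid--Auroux--Katzarkov \cite{aak}, with the preceding paragraphs of Appendix \ref{section:converse} serving only as an outline of the construction. Your proposal is a faithful expansion of precisely that outline: gluing the charts $U_\alpha$ by the wall-crossing formula (\ref{eq:chgl}) and the $w_0$-invariance from the Proposition, completing analytically, and then reading off $W$ as the sum of the contributions $W_w$ and $W_{\sigma_i}$ from Lemmas \ref{lemma:3.7} and \ref{lemma:3.8} over the irreducible components of $D^\vee$. So your approach matches the paper's sketch, and there is nothing further to compare.

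One small correction: your justification for the splitting $\mathfrak{m}_0 = (W_w + \sum_i W_{\sigma_i})\,e_{L_b}$ is not quite right. Lemma \ref{lemma:obstr2} concerns Maslov index $0$ discs in the open manifold $X^\vee$, not the Maslov index $2$ discs in $\overline{X}^\vee$; the decomposition of $W$ by irreducible components of $D^\vee$ comes instead from the fact (used throughout \cite{aak}) that under Assumption \ref{assumption:3.2} each Maslov index $2$ class meets $D^\vee$ with total intersection number $1$, hence meets exactly one irreducible component. This is the point at which the absence of sphere bubbles (ruled out by Assumption \ref{assumption:3.1}) is actually needed, as you correctly indicate in your final paragraph.
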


\section{Localized mirror functor}\label{section:localized}
We recall here the construction of the localized mirror functor associated to a weakly unobstructed Lagrangian torus in the sense of $\cite{chl}$ and $\cite{chl1}$, and then apply it to the setting of Section \ref{section:fuk}. For simplicity, assume that $T\subset Y$ is a monotone Lagrangian torus in the monotone symplectic manifold $Y$ with $\dim_\mathbb{R}(Y)=2n$. The idea is that although family Floer theory should be regarded as an infinite direct sum of Yoneda modules associated to the fibers (equipped with local systems), in the local case of a chart $U\subset Y$ containing $T$, the family Floer theory associated to the putative SYZ fibration $\pi$ restricted to $U$ can be replaced by the Yoneda module associated to the Lagrangian torus $T$ equipped with $\mathbb{K}^\ast$ local systems. This is actually the cases of toric Calabi-Yau manifolds and their mirrors, where $T$ is represented by a fiber $L_b$ of the corresponding SYZ fibration.\\
Given a weakly unobstructed, Spin Lagrangian submanifold $L\subset Y$, denote by $\mathcal{M}_\mathrm{weak}(L)$ the space of weak bounding cochains of $L$. Denote by $\mathit{\Lambda}_+\subset\mathit{\Lambda}_0$ the maximal ideal in the Novikov ring
\begin{equation}\mathit{\Lambda}_0=\left\{\sum_{i=1}^\infty a_iT^{\lambda_i}\left|a_i\in\mathbb{C},\lambda_i\geq0,\lambda_i\rightarrow\infty\right.\right\}\end{equation}
recall that $b\in C^1(L,\mathit{\Lambda}_+)$ is called a weak bounding cochain if it satisfies the Maurer-Cartan equation
\begin{equation}\sum_{k=0}^\infty\mathfrak{m}_k(b,\cdot\cdot\cdot,b)=W(L,b)\cdot e_L.\end{equation}
The mirror matrix factorization will be constructed using the non-trivial deformation of the $A_\infty$ structure of the monotone Fukaya category $\mathcal{F}uk(Y)$ induced by the weak bounding cochains. Namely we can define an extended Fukaya category $\widetilde{\mathcal{F}uk}(Y)$ whose objects are $L\times\mathcal{M}_\mathrm{weak}(L)$ with $L\in\mathrm{Ob}\big(\mathcal{F}uk(Y)\big)$ and whose morphism spaces are
\begin{equation}\mathit{CF}^\ast\big((L_1,b_1),(L_2,b_2)\big):=\mathit{CF}^\ast(L_1,L_2),\end{equation}
where $b_i\in\mathcal{M}_\mathrm{weak}(L_i)$. More precisely, for $\mathit{CF}^\ast(L_1,L_2)$ to be well-defined, we need to work over a direct summand $\mathcal{F}uk(Y)_\lambda\subset\mathcal{F}uk(Y)$ consisting of Lagrangians with $\mathfrak{m}_0(L)=\lambda\cdot e_L$ instead of the whole Fukaya category. The deformed $A_\infty$ operations on $\widetilde{\mathcal{F}uk}(Y)_\lambda$
\begin{equation}\mathfrak{m}_k^{b_0,\cdot\cdot\cdot,b_k}:\mathit{CF}^\ast(L_0,L_1)[1]\otimes\cdot\cdot\cdot\otimes\mathit{CF}^\ast(L_{k-1},L_k)[1]\rightarrow \mathit{CF}^\ast(L_0,L_k)[1]\end{equation}
are defined to be
\begin{equation}\mathfrak{m}_k^{b_0,\cdot\cdot\cdot,b_k}(x_1,\cdot\cdot\cdot,x_k)=\sum_{l_0,\cdot\cdot\cdot,l_k}\mathfrak{m}_{k+l_0+\cdot\cdot\cdot+l_k}\left(b_0^{\otimes l_0},x_1,b_1^{\otimes l_1},\cdot\cdot\cdot,x_k,b_k^{\otimes l_k}\right).\end{equation}
To get a mirror functor associated to the monotone Lagrangian torus $T\subset Y$ fixed at the beginning of this appendix, we specialize to the case when $L=T$ in the above, so the localized mirror functor is actually defined to be the Yoneda module associated to $(T,b)\in\mathrm{Ob}\big(\mathcal{F}uk(Y)\big)\times\mathcal{M}_\mathrm{weak}(T)$. We emphasize that there is no need to require that $T$ lies in the direct summand $\widetilde{\mathcal{F}uk}(Y)_\lambda$.
\begin{definition}[Cho-Hong-Lau $\cite{chl1}$]
Fix a $\lambda\in\mathit{\Lambda}$, the $A_\infty$ functor
\begin{equation}\widehat{\Phi}_\mathrm{CHL}^{(T,b)}:\widetilde{\mathcal{F}uk}(Y)_\lambda\rightarrow\mathit{MF}\big(W(T,b)-\lambda\big)\end{equation}
is defined as follows:
\begin{itemize}
\item For $(L_1,b_1)\in\mathrm{Ob}\big(\widetilde{\mathcal{F}uk}(Y)_\lambda\big)$,
\begin{equation}\widehat{\Phi}_\mathrm{CHL}^{(T,b),0}(L_1,b_1):=\Big(\mathit{CF}^\ast\big((T,b),(L_1,b_1)\big),\mathfrak{m}_1^{b,b_1}\Big).\end{equation}
\item For $(L_1,b_1),\cdot\cdot\cdot,(L_k,b_k)\in\mathrm{Ob}\big(\widetilde{\mathcal{F}uk}(Y)_\lambda\big)$ and $x_i\in\mathit{CF}^\ast\big((L_i,b_i),(L_{i+1},b_{i+1})\big)$ for $i=1,\cdot\cdot\cdot,k$.
\begin{equation}\widehat{\Phi}_\mathrm{CHL}^{(T,b),k}(x_1,\cdot\cdot\cdot,x_k):\mathit{CF}^\ast\big((T,b),(L_1,b_1)\big)\rightarrow\mathit{CF}^\ast\big((T,b),(L_k,b_k)\big)\end{equation}
is defined as
\begin{equation}\widehat{\Phi}_\mathrm{CHL}^{(T,b),k}(x_1,\cdot\cdot\cdot,x_k)(y)=(-1)^{\left(\sum_i\deg x_i+k\right)(\deg y+1)}\mathfrak{m}_{k+1}^{b,b_0,\cdot\cdot\cdot,b_k}(y,x_1,\cdot\cdot\cdot,x_k),\end{equation}
where $y\in\mathit{CF}^\ast\big((T,b),(L_1,b_1)\big)$.
\end{itemize}
\end{definition}
In $\cite{chl}$, the definition of the above $A_\infty$ functor is formulated in the language of Lagrangian submanifolds equipped with flat line bundles, and the definition is given in the special case when $L_1,\cdot\cdot\cdot,L_k$ are monotone and $b_1=\cdot\cdot\cdot=b_k=0$, or equivalently, in the case when the line bundles $\mathcal{L}_i\rightarrow L_i$ are equipped with trivial connections. $\widehat{\Phi}_\mathrm{CHL}^{(T,b)}$ defined above then becomes an $A_\infty$ functor defined on $\mathcal{F}uk(Y)_\lambda$. Under this formulation, the superpotentials $W(T,b)$ and $W(L_i)$ are naturally defined on $(\mathit{\Lambda}^\ast)^n$ rather than $\mathit{\Lambda}^n$.

We further restrict ourselves to the case when $Y=\big(\mathit{Bl}_K(\mathbb{C}^2),\Omega_1\big)$, which is the situation of Section \ref{section:fuk} where the above formalism is applied to a monotone Lagrangian torus $\widetilde{T}_i\subset \mathit{Bl}_K(\mathbb{C}^2)$. Since $\widetilde{T}_i$ is monotone, we can choose $b_i\in\mathcal{M}_\mathrm{weak}(\widetilde{T}_i)$ so that the superpotential $W\big(\widetilde{T}_i,b_i\big)=0$. With this choice, $\widehat{\Phi}_\mathrm{CHL}^{(\widetilde{T}_i,b_i)}$ defines an $A_\infty$ functor
\begin{equation}\widehat{\Phi}_\mathrm{CHL}^{(\widetilde{T}_i,b_i)}:\mathcal{F}uk\big(\mathit{Bl}_K(\mathbb{C}^2)\big)_{\widetilde{\lambda}}\rightarrow\mathit{MF}\big(W(\widetilde{T}_i)\big).\end{equation}
Namely as objects of the non-zero eigensummand $\mathcal{F}uk\big(\mathit{Bl}_K(\mathbb{C}^2)\big)_{\widetilde{\lambda}}$, we equip the Lagrangian tori $\widetilde{T}_i$'s with trivial bounding cochains. Since the Lagrangian tori $\widetilde{T}_1,\cdot\cdot\cdot,\widetilde{T}_p$ are Floer theoretically orthogonal, applying the above functor to each $\widetilde{T}_i$ we get an $A_\infty$ functor
\begin{equation}\widehat{\Phi}_\mathrm{CHL}:\mathcal{F}uk\big(\mathit{Bl}_K(\mathbb{C}^2)\big)_{\widetilde{\lambda}}\rightarrow\mathit{MF}(X,W),\end{equation}
which is exactly the functor appeared in $(\ref{eq:localmf})$.

\end{document}